\DeclareFontFamily{U}{mathx}{\hyphenchar\font45}
\DeclareFontShape{U}{mathx}{m}{n}{
      <5> <6> <7> <8> <9> <10>
      <10.95> <12> <14.4> <17.28> <20.74> <24.88>
      mathx10
      }{}
\DeclareSymbolFont{mathx}{U}{mathx}{m}{n}
\DeclareMathAccent{\widecheck}{0}{mathx}{"71}
\newtheorem{theorem}{Theorem}[section]
\newtheorem{lemma}[theorem]{Lemma}
\newtheorem{proposition}[theorem]{Proposition}
\newtheorem{corollary}[theorem]{Corollary}
\newtheorem{conjecture}[theorem]{Conjecture}
\newtheorem{problem}[theorem]{Problem}
\theoremstyle{definition}
\newtheorem{definition}[theorem]{Definition}
\newenvironment{remark}
  {\pushQED{\qed}\remarkx}
  {\popQED\endremarkx}
\newenvironment{example}
  {\pushQED{\qed}\examplex}
  {\popQED\endexamplex}
\DeclareMathOperator{\Pop}{\mathsf{Pop}}
\DeclareMathOperator{\syl}{syl}
\DeclareMathOperator{\Vecc}{Vec}
\DeclareMathOperator{\Cov}{Cov}
\DeclareMathOperator{\Av}{Av}
\DeclareMathOperator{\rev}{rev}
\DeclareMathOperator{\tl}{tl}
\DeclareMathOperator{\Tam}{Tam}
\DeclareMathOperator{\N}{N}
\DeclareMathOperator{\E}{E}
\newcommand{\dfn}[1]{\textcolor{blue}{\emph{#1}}}
\begin{document}
\title{Meeting Covered Elements in $\nu$-Tamari Lattices}
\author{Colin Defant}
\address{Princeton University \\ Department of Mathematics \\ Princeton, NJ 08544}
\email{cdefant@princeton.edu}
\subjclass{05A15, 06A12, 06B10}

\begin{abstract}
For each meet-semilattice $M$, we define an operator $\Pop_M:M\to M$ by \[\Pop_M(x)=\bigwedge(\{y\in M:y\lessdot x\}\cup\{x\}).\] When $M$ is the right weak order on a symmetric group, $\Pop_M$ is the pop-stack-sorting map. We prove some general properties of these operators, including a theorem that describes how they interact with certain lattice congruences. We then specialize our attention to the dynamics of $\Pop_{\Tam(\nu)}$, where $\Tam(\nu)$ is the $\nu$-Tamari lattice. We determine the maximum size of a forward orbit of $\Pop_{\Tam(\nu)}$. When $\Tam(\nu)$ is the $n^\text{th}$ $m$-Tamari lattice, this maximum forward orbit size is $m+n-1$; in this case, we prove that the number of forward orbits of size $m+n-1$ is \[\frac{1}{n-1}\binom{(m+1)(n-2)+m-1}{n-2}.\] Motivated by the recent investigation of the pop-stack-sorting map, we define a lattice path $\mu\in\Tam(\nu)$ to be \emph{$t$-$\Pop$-sortable} if $\Pop_{\Tam(\nu)}^t(\mu)=\nu$. We enumerate $1$-$\Pop$-sortable lattice paths in $\Tam(\nu)$ for arbitrary $\nu$. We also give a recursive method to generate $2$-$\Pop$-sortable lattice paths in $\Tam(\nu)$ for arbitrary $\nu$; this allows us to enumerate $2$-$\Pop$-sortable lattice paths in a large variety of $\nu$-Tamari lattices that includes the $m$-Tamari lattices. 
\end{abstract}

\maketitle

\bigskip

\section{Introduction}\label{Sec:Intro} 

\subsection{Pop-Stack-Sorting}
Suppose $X$ is a set of combinatorial objects and $f:X\to X$ is a function defined in some combinatorial manner. We can view $f$ as a dynamical system and define the \dfn{forward orbit} of an element $x\in X$ under $f$ to be the set $O_f(x)=\{x,f(x),f^2(x),\ldots\}$. When $f$ is invertible, a standard line of questioning involves the orbit structure of $f$. For example, it is natural to ask for the maximum size of a forward orbit. On the other hand, the aim of \emph{noninvertible combinatorial dynamics} is to study this setup when $f$ is not invertible, often focusing on the transient (i.e., non-periodic) points. As in the invertible case, it is natural to ask for the maximum size of a forward orbit; this question was explored for some specific noninvertible combinatorial dynamical systems in \cite{AlbertVatter, DefantCoxeterPop, PromotionSorting, DefantZheng, Bentz, Igusa, Etienne, Griggs, Ungar, West}. Another typical line of questions, especially in the case of \emph{sorting operators}, asks for the characterization and/or enumeration of elements of $X$ that require at most $t$ iterations of $f$ to reach a fixed point (see \cite{DefantCounting, BonaSurvey, Zeilberger, Goulden, ClaessonPop, Pudwell, PromotionSorting, Elder, Albert} and the references therein). 

A \dfn{meet-semilattice} is a poset $M$ such that any two elements $x,y\in M$ have a greatest lower bound, which is called their \dfn{meet} and denoted by $x\wedge y$. A \dfn{lattice} is a meet-semilattice $M$ such that any two elements $x,y\in M$ have a least upper bound, which is called their \dfn{join} and denoted by $x\vee y$. We write $\bigwedge A$ for the meet of a set $A\subseteq M$. Given elements $x$ and $y$ in a poset $P$, we write $y\lessdot x$ to mean that $y$ is covered by $x$ (i.e., $y<x$, and there does not exist $z\in P$ with $y<z<x$). Every meet-semilattice $M$ considered in this article is assumed to have the property that the set $\{y\in M:y\lessdot x\}$ exists and has a meet for every $x\in M$. The main definition in this article introduces, for each meet-semilattice $M$, a natural noninvertible dynamical system on $M$. 

\begin{definition}\label{DefMeet1}
Let $M$ be a meet-semilattice. Define the \dfn{semilattice pop-stack-sorting operator} $\Pop_M:M\to M$ by \[\Pop_M(x)=\bigwedge(\{y\in M:y\lessdot x\}\cup\{x\}).\]
\end{definition}

The only reason for using the set $\{y\in M:y\lessdot x\}\cup\{x\}$ instead of $\{y\in M:y\lessdot x\}$ in Definition~\ref{DefMeet1} is to ensure that if $M$ has a minimal element $\widehat 0$, then $\Pop_M(\widehat 0)=\widehat 0$. 

The motivation behind Definition~\ref{DefMeet1} comes from the \emph{pop-stack-sorting map}, which is a map defined on the symmetric group $S_n$ as a deterministic variant of a pop-stack-sorting machine that was originally introduced by Avis and Newborn in \cite{Avis} (see Section~\ref{SecCongruences} for its definition). This map originally appeared under a different guise in a paper of Ungar's \cite{Ungar} about directions determined by points in the plane, and it has received a great deal of attention from enumerative combinatorialists in the past few years \cite{AlbertVatter, Asinowski, Asinowski2, DefantCoxeterPop, Elder, ClaessonPop, ClaessonPop2, Pudwell, Ungar}. In the recent article \cite{DefantCoxeterPop}, the current author viewed the pop-stack-sorting map on $S_n$ from a different perspective, observing that it is equal to $\Pop_M$ when $M$ is the right weak order on $S_n$. This motivated the study of $\Pop_M$ when $M$ is the right weak order on an arbitrary Coxeter group.

Recent work has focused on \emph{$t$-pop-stack-sortable} permutations, which are permutations that require at most $t$ iterations of the pop-stack-sorting map to reach the identity \cite{AlbertVatter, Elder, ClaessonPop, Pudwell}. There has also been a great deal of work devoted to \emph{$t$-stack-sortable} permutations in the study of West's stack-sorting map, especially for $t\in\{1,2,3\}$ (see \cite{DefantCounting, DefantElvey, BonaSurvey, Zeilberger, Goulden, Albert} and the references therein). This work is our motivation for the following definition. Suppose $M$ is a meet-semilattice with a minimal element $\widehat 0$. We say an element $x\in M$ is \dfn{$t$-$\Pop$-sortable} if $\Pop_M^t(x)=\widehat 0$. 

The \emph{Tamari lattices} $\Tam_n$ were originally introduced by Tamari in \cite{Tamari} and have now become fundamental in algebraic combinatorics \cite{Muller}. It is common to define Tamari lattices on Dyck paths, as we do in Section~\ref{SubsecLattice}. There are now several generalizations of Tamari lattices. For example, Bergeron and Pr\'eville-Ratelle \cite{Bergeron} introduced \emph{$m$-Tamari lattices} in order to give (still open) conjectural combinatorial interpretations of the dimensions of certain spaces arising from the study of trivariate diagonal harmonics. These lattices are defined on \dfn{$m$-ballot paths}, which are lattice paths that start at $(0,0)$, end at $(mn,n)$, use only unit north and east steps, and stay weakly above the line $y=x/m$. The $m$-Tamari lattices have now been studied extensively (see \cite{BousquetRep, BousquetIntervals, Chatel} and the references therein). In \cite{PrevilleViennot}, Pr\'eville-Ratelle and Viennot introduced the $\nu$-Tamari lattice $\Tam(\nu)$, which is defined on the set of lattice paths lying weakly above a fixed lattice path $\nu$. These lattices are far-reaching generalizations of $m$-Tamari lattices; they have now been investigated further in \cite{Ceballos, Ceballos2, vonBell, vonBell2, FangPreville}. We define Tamari, $m$-Tamari, and $\nu$-Tamari lattices in Section~\ref{SubsecLattice}.   

\subsection{Other Incarnations of Pop-Stack-Sorting}

As mentioned above, the pop-stack-sorting map can be viewed as the semilattice pop-stack-sorting operator on the right weak order of $S_n$. Let us briefly mention some other places where pop-stack-sorting arises naturally. 

When $M$ is the lattice of regions of a real simplicial hyperplane arrangement, Reading considered $\Pop_M$ in relation to his \emph{shard intersection order} (see \cite[Section 9-7]{ReadingBook} and \cite{ReadingShard}). This notion has received further attention for congruence uniform lattices in \cite{Muhle1} and for arbitrary lattices in \cite{Muhle2}. 

Let $\Phi^+(A_{n-1})$ be the root poset of type $A_{n-1}$. There is a natural bijection between the set of order ideals of $\Phi^+(A_{n-1})$ and the set of Dyck paths of semilength $n$. When ordered by inclusion, these order ideals form a distributive lattice $\mathcal J(\Phi^+(A_{n-1}))$. The article \cite{Sapounakis} defines a certain \emph{filling} operator on Dyck paths and studies the image of this operator along with some of its dynamical properties. It is easy to show that the filling operator on Dyck paths corresponds exactly to the pop-stack-sorting operator on the dual of $\mathcal J(\Phi^+(A_{n-1}))$.  

There is also a natural appearance of semilattice pop-stack-sorting operators that deals with chip-firing. Since we only plan to mention this connection in passing, we refer the reader to \cite[Chapter~2]{Klivans} for the relevant terminology. Suppose ${\bf c}$ is a chip configuration for a finite graph $G$ such that the chip-firing process starting from ${\bf c}$ stabilizes. Let $M_{\bf c}$ be the set of chip configurations for $G$ that are reachable from ${\bf c}$. Define the partial order $\leq$ on $M_{\bf c}$ by saying ${\bf d}'\leq{\bf d}$ if ${\bf d}'$ is reachable from ${\bf d}$. Latapy and Phan \cite{Latapy} proved that $M_{\bf c}$ is a lattice. For each ${\bf d}\in M_{\bf c}$, the chip configuration $\Pop_{M_{\bf c}}({\bf d})$ is obtained from ${\bf d}$ by simultaneously firing (also called \emph{cluster firing}) all vertices that are ready to fire in ${\bf c}$. This setup is very similar to \emph{parallel chip-firing}, which was initiated in \cite{Bitar} and has received significant attention thereafter. However, parallel chip-firing is different from our setup because it allows vertices to have negative numbers of chips (so there are no stable configurations).

\subsection{Outline}

In Section~\ref{SecPopTrivial}, we define a meet-semilattice $M$ with a minimal element $\widehat 0$ to be \emph{$\Pop$-trivial} if $\Pop_M(x)=\widehat 0$ for all $x\in M$. We show that geometric lattices are $\Pop$-trivial and ask for a characterization of finite $\Pop$-trivial lattices. 

Suppose $\equiv$ is a lattice congruence on a locally finite lattice $M$. We write $\pi_\downarrow(x)$ for the minimal element of the congruence class of $\equiv$ containing $x$. In Section~\ref{SecCongruences}, we prove that if the set $M'=\{\pi_\downarrow(x):x\in M\}$ is a sublattice of $M$, then $\Pop_{M'}(x)=\pi_\downarrow(\Pop_M(x))$ for all $x\in M'$. When $\equiv$ is the sylvester congruence on the right weak order of the symmetric group $S_n$, the set $M'$ is isomorphic to the Tamari lattice $\Tam_n$. Thus, we obtain a combinatorial description of the action of $\Pop_{\Tam_n}$ in terms of the pop-stack-sorting map on permutations. In fact, we will recast this description as a combination of the pop-stack-sorting map and West's stack-sorting map. 

Section~\ref{SecTamari} is devoted to studying the semilattice pop-stack-sorting operators on $\nu$-Tamari lattices. We obtain several general results that apply when $\nu$ is an arbitrary lattice path, but all of the theorems we prove are new even for Tamari lattices. Our first main theorem in Section~\ref{SecTamari} (Theorem~\ref{ThmMeet2}) provides a formula for the maximum size of a forward orbit of the dynamical system $\Pop_{\Tam(\nu)}:\Tam(\nu)\to\Tam(\nu)$. When we specialize to the $m$-Tamari lattice $\Tam_n(m)$, the formula simplifies to $m+n-1$. We then characterize the lattice paths $\mu\in\Tam_n(m)$ such that $\left\lvert O_{\Pop_{\Tam_n(m)}}(\mu)\right\rvert=m+n-1$, and we use generating trees to prove that the number of such lattice paths is \[\frac{1}{n-1}\binom{(m+1)(n-2)+m-1}{n-2}.\] This is also the number of \emph{primitive} $m$-ballot paths in $\Tam_{n-1}(m)$, though the precise connection between the maximum-size forward orbits and the primitive $m$-ballot paths remains mysterious at this time. 

The remainder of Section~\ref{SecTamari} focuses on $1$-$\Pop$-sortable and $2$-$\Pop$-sortable elements of $\nu$-Tamari lattices. We prove that if $\nu=\text{E}^{\gamma_0}\text{NE}^{\gamma_1}\cdots\text{NE}^{\gamma_n}$ ($\text{N}$ and $\text{E}$ denote north and east steps, respectively), then the number of $1$-$\Pop$-sortable elements of $\Tam(\nu)$ is $2^{|\mathscr A(\nu)|}$, where $\mathscr A(\nu)$ is the set of indices $k\in\{0,\ldots,n-1\}$ such that $\gamma_k\geq 1$ (so $|\mathscr A(\nu)|$ is the number of east runs of $\nu$). We then give a recursive procedure for generating the $2$-$\Pop$-sortable elements of $\Tam(\nu)$ for arbitrary $\nu$. This allows us to enumerate $2$-$\Pop$-sortable elements of $\Tam(\nu)$ for a large variety of lattice paths $\nu$. For example, suppose $\nu=\text{E}^{\alpha_0}\text{N}^{\beta_0}\text{E}^{\alpha_1}\text{N}^{\beta_1}\cdots\text{E}^{\alpha_{q-1}}\text{N}^{\beta_{q-1}}\text{E}^{\alpha_{q}}$, where $\alpha_0,\ldots,\alpha_{q},\beta_0,\ldots,\beta_{q-1}$ are nonnegative integers such that $\alpha_1,\ldots,\alpha_{q-1}\geq 1$ and $\beta_0,\ldots,\beta_{q-1}\geq 2$. We prove that the number of $2$-$\Pop$-sortable lattice paths in $\Tam(\nu)$ is $3^{\theta(\nu)}5^{\chi(\nu)}$, where $\theta(\nu)$ is the number of indices $i\in\{0,\ldots,q-1\}$ such that $\alpha_i=1$ and $\chi(\nu)$ is the number of indices $i\in\{0,\ldots,q-1\}$ such that $\alpha_i\geq 2$. This theorem does not apply to $m$-Tamari lattices, so we enumerate their $2$-$\Pop$-sortable elements separately. In particular, we prove that $2$-$\Pop$-sortable elements of Tamari lattices are counted by Pell numbers. 

Our definition of semilattice pop-stack-sorting operators opens the door to several possibilities for future work, especially because there are so many fascinating classes of meet-semilattices one could study. In Section~\ref{SecConclusion}, we collect some specific ideas related to the topics discussed in this article. In particular, we conjecture that for all fixed $m,t\geq 1$, the generating function that counts $t$-$\Pop$-sortable elements of the $m$-Tamari lattices $\Tam_n(m)$ is rational.

\section{$\Pop$-Trivial Meet-Semilattices}\label{SecPopTrivial}

Let us say a meet-semilattice $M$ is \dfn{$\Pop$-trivial} if it has a minimal element $\widehat 0$ and $\Pop_M(x)=\widehat 0$ for every $x\in M$. In this section, we discuss finite $\Pop$-trivial lattices, showing, in particular, that all geometric lattices are $\Pop$-trivial. 

Let $M$ be a finite lattice with minimal element $\widehat 0$. The elements that cover $\widehat 0$ are called \dfn{atoms}. We say $M$ is \dfn{atomic} if every element of $M$ can be written as the join of a set of atoms. We say $M$ is \dfn{graded} if its maximal chains all have the same length. If $M$ is graded, then there is a unique \dfn{rank function} $\text{rk}:M\to\mathbb Z_{\geq 0}$ such that $\text{rk}(\widehat 0)=0$ and $\text{rk}(z)=\text{rk}(y)+1$ for every cover relation $y\lessdot z$. We say $M$ is \dfn{semimodular} if it is graded and $\text{rk}(x)+\text{rk}(y)\geq\text{rk}(x\wedge y)+\text{rk}(x\vee y)$ for all $x,y\in M$. A \dfn{geometric lattice} is a finite graded lattice that is atomic and semimodular. Notable examples of geometric lattices include Boolean lattices, partition lattices, and lattices of subspaces of finite vector spaces \cite[Section~3.4]{Stanley}.  

The importance of geometric lattices comes from matroid theory. Let $E$ be a finite set. A \dfn{matroid} on $E$ is a pair $(E,\mathcal I)$, where $\mathcal I$ is a nonempty collection of subsets of $E$ satisfying the following properties: 
\begin{itemize}
\item If $A\in\mathcal I$ and $B\subseteq A$, then $B\in\mathcal I$.
\item If $A,B\in\mathcal I$ and $|A|<|B|$, then there exists $b\in B\setminus A$ such that $A\cup\{b\}\in\mathcal I$.
\end{itemize}
The sets in $\mathcal I$ are called \dfn{independent}. The \dfn{rank} of a subset $A$ of $E$ (with respect to the matroid), denoted $\text{rk}(A)$, is defined to be the maximum size of an independent set contained in $A$. A \dfn{flat} of the matroid is a subset $A$ of $E$ such that $\text{rk}(A)<\text{rk}(A\cup\{i\})$ for all $i\in E\setminus A$. The \dfn{lattice of flats} of the matroid is the collection of all flats of the matroid ordered by containment; it is indeed a lattice. We say the matroid $(E,\mathcal I)$ is \dfn{simple} if every $2$-element subset of $E$ is independent. It is well known that a lattice is geometric if and only if it is isomorphic to the lattice of flats of a simple finite matroid \cite[Section~3.4]{Stanley}. 

\begin{theorem}
Every geometric lattice is $\Pop$-trivial. 
\end{theorem}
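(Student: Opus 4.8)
The plan is to reduce the statement to a clean fact about coatoms and then prove that fact using atomicity and semimodularity. First I would dispose of the trivial case $x=\widehat 0$: here the set $\{y\in M:y\lessdot x\}$ is empty, so $\Pop_M(\widehat 0)=\bigwedge\{\widehat 0\}=\widehat 0$. For $x\neq\widehat 0$ there is at least one atom weakly below $x$ and hence at least one element covered by $x$, so $x$ itself contributes nothing to the meet and $\Pop_M(x)=\bigwedge\{y\in M:y\lessdot x\}$. Write $m$ for this meet; the goal is to show $m=\widehat 0$. Since $m\leq x$ and $M$ is atomic, $m$ is the join of the atoms lying weakly below it, so it suffices to prove that no atom lies below $m$. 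As every atom below $m$ is in particular an atom below $x$, I may fix an atom $a\leq x$ and aim to produce an element $y\lessdot x$ with $a\not\leq y$; this shows $a\not\leq m$.

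The key construction is complementation inside the interval $[\widehat 0,x]$. Geometric lattices are relatively complemented (this is standard and follows from atomicity together with semimodularity; see \cite[Section~3.4]{Stanley}), so the atom $a$ admits a complement $c$ in $[\widehat 0,x]$, meaning $c\leq x$ with $a\vee c=x$ and $a\wedge c=\widehat 0$. One could instead build $c$ by hand as a maximal element of $[\widehat 0,x]$ not lying above $a$, but invoking relative complementation is cleanest.

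The final step verifies that such a complement is forced to be a covered element of $x$, and this is where semimodularity does the work. The inequality $\text{rk}(a)+\text{rk}(c)\geq\text{rk}(a\wedge c)+\text{rk}(a\vee c)$ becomes $1+\text{rk}(c)\geq\text{rk}(x)$, so $\text{rk}(c)\geq\text{rk}(x)-1$. Since $c\leq x$ forces $\text{rk}(c)\leq\text{rk}(x)$, and $\text{rk}(c)=\text{rk}(x)$ would give $c=x$ and hence $a=a\wedge x=a\wedge c=\widehat 0$, a contradiction, we conclude $\text{rk}(c)=\text{rk}(x)-1$. Because $M$ is graded, an element below $x$ of rank $\text{rk}(x)-1$ is covered by $x$, so $y:=c\lessdot x$, and $a\not\leq c$ because $a\wedge c=\widehat 0\neq a$. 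Thus $a\not\leq m$, and as $a$ was an arbitrary atom below $x$ we get $m=\widehat 0$, i.e.\ $\Pop_M(x)=\widehat 0$.

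I expect the main obstacle to be precisely the justification that the complement $c$ is a coatom of $[\widehat 0,x]$ rather than some lower element; the semimodular rank inequality is exactly what rules this out, so the delicate point is a correct use of gradedness and semimodularity rather than any lengthy computation. A secondary point I would state carefully is the reduction itself: namely that $\Pop_M(x)$ collapses to the meet of the elements covered by $x$, and that atomicity lets me test membership in that meet one atom at a time.
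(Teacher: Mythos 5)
Your proof is correct, but it takes a genuinely different route from the paper's. The paper immediately passes to the matroid side: it realizes the geometric lattice as the lattice of flats of a finite simple matroid, and for a flat $F$ and a point $a\in F$ it uses the exchange axiom to build an independent set $A\subseteq F$ of size $\text{rk}(F)$ with $a\in A$; the closure $F'$ of $A\setminus\{a\}$ is then an explicit flat covered by $F$ with $a\notin F'$, whence $a\notin\Pop_M(F)$ and $\Pop_M(F)=\emptyset=\widehat 0$. You instead stay entirely inside the lattice: atomicity reduces the problem to showing that no atom $a\leq x$ lies below $\bigwedge\{y:y\lessdot x\}$, a complement $c$ of $a$ in $[\widehat 0,x]$ supplies the witness, and the semimodular inequality $1+\text{rk}(c)\geq\text{rk}(x)$ together with gradedness forces $\text{rk}(c)=\text{rk}(x)-1$, i.e.\ $c\lessdot x$, with $a\not\leq c$ since $a\wedge c=\widehat 0$. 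The two arguments are morally parallel --- the paper's $F'$ is exactly a complement of the atom corresponding to $a$ in $[\widehat 0,F]$, built concretely from a matroid basis --- but each outsources a different standard fact: the paper invokes the characterization of geometric lattices as lattices of flats of simple matroids, while you invoke the (also standard) fact that geometric lattices, and hence their lower intervals, are complemented. Your version has the virtue of being intrinsic and of isolating where semimodularity does the work (pinning the complement's rank so that it is a coatom of $[\widehat 0,x]$); the paper's version buys constructive concreteness and avoids citing complementation. One small caveat: your parenthetical alternative of taking $c$ maximal in $[\widehat 0,x]$ with $a\not\leq c$ would itself require a short semimodularity argument to verify $a\vee c=x$ (adjoining an atom $b\leq x$ with $b\not\leq a\vee c$ and using that $c\vee b$ covers $c$), but since your main line cites relative complementation, the proof is complete as written.
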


\begin{proof}
Let $M$ be a geometric lattice with minimal element $\widehat 0$. We may assume $M$ has at least $2$ elements since the result is trivial otherwise. We may also assume that $M$ is the lattice of flats of a finite simple matroid $(E,\mathcal I)$. Let $F\in M$ be a flat of this matroid. Choose $a\in F$. Because $(E,\mathcal I)$ is simple, the singleton set $\{a\}$ is independent. We claim that there is an independent subset $A$ of $F$ of cardinality $\text{rk}(F)$ that contains $a$. To see this, choose an arbitrary independent subset $B$ of $F$ of cardinality $\text{rk}(F)$. If $a\in B$, then we can simply put $A=B$. Now assume $a\not\in B$. Note that, by the simplicity of $M$, this implies that $\text{rk}(F)\geq 2$. Since $\{a\}$ is independent, we can use the definition of a matroid to see that there exists $b_1\in B\setminus\{a\}$ such that $\{a,b_1\}$ is independent. If $\text{rk}(F)\geq 3$, then we can use the definition of a matroid again to see that there exists $b_2\in B\setminus\{a,b_1\}$ such that $\{a,b_1,b_2\}$ is independent. Repeating this argument, we eventually find distinct elements $b_1,\ldots,b_{\text{rk}(F)-1}\in B$ such that $\{a,b_1,\ldots,b_{\text{rk}(F)-1}\}$ is independent, proving the claim. 

Let $F'\in M$ be the smallest flat containing $A\setminus\{a\}$. Then $\text{rk}(F')=\text{rk}(A\setminus\{a\})=\text{rk}(F)-1$, and $F'$ does not contain $a$. The flat $F$ covers the flat $F'$ in $M$. It follows that $\Pop_M(F)\subseteq F'$. Hence, $a$ is not an element of $\Pop_M(F)$. As $a$ was arbitrary, this shows that $\Pop_M(F)=\emptyset=\widehat 0$. 
\end{proof}

Let us remark that there are finite graded atomic lattices that are not $\Pop$-trivial. There are also finite semimodular lattices that are not $\Pop$-trivial. For example, the lattice whose Hasse diagram is depicted in Figure~\ref{FigMeet1} is semimodular but not $\Pop$-trivial. The dual of this lattice is atomic but not $\Pop$-trivial. 

\begin{figure}[ht]
  \begin{center}{\includegraphics[height=2.727cm]{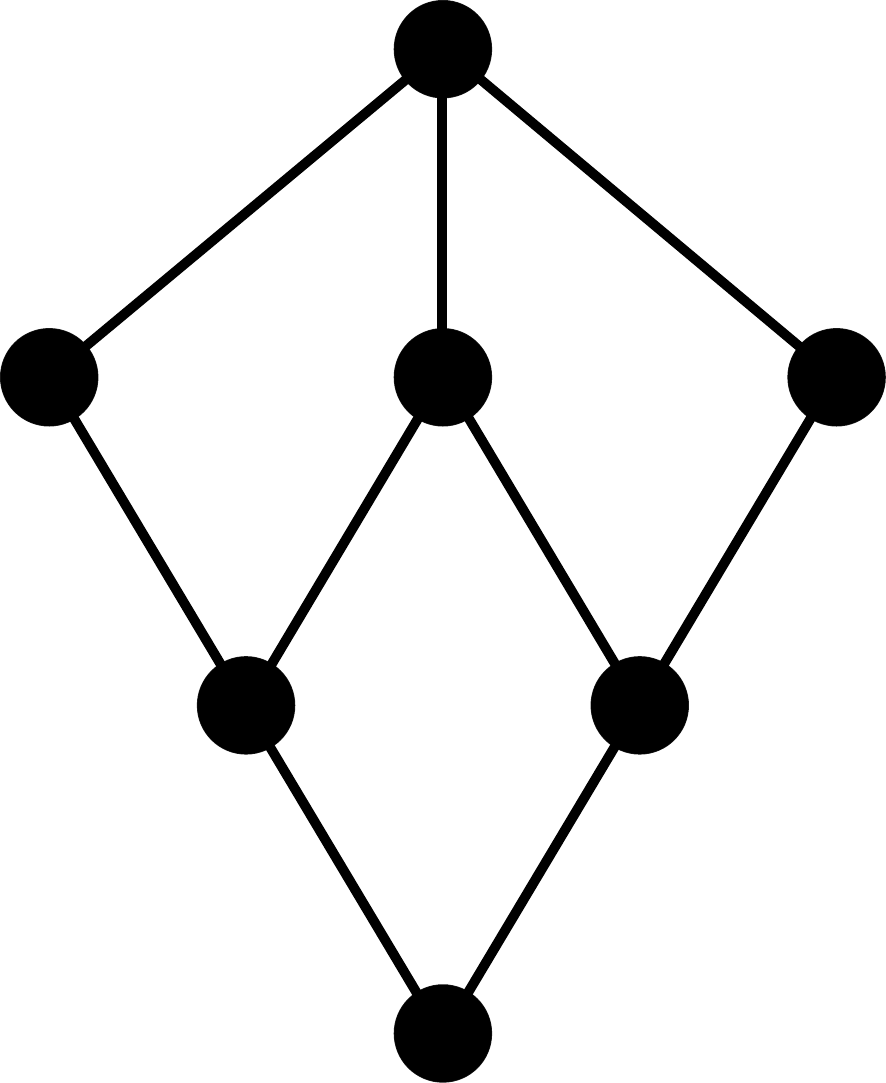}}
  \end{center}
  \caption{A semimodular lattice that is not $\Pop$-trivial. The dual of this lattice is atomic but not $\Pop$-trivial.}\label{FigMeet1}
\end{figure}

\begin{problem}\label{ProbMeet1}
Characterize finite $\Pop$-trivial lattices. 
\end{problem}

\section{Sublattices and Lattice Congruences}\label{SecCongruences}

When working with the semilattice pop-stack-sorting operator $\Pop_M$, the first challenge is to obtain a combinatorial description of what the operator is actually doing. In this section, we provide a general theorem that, when specialized to the case of Tamari lattices, provides such a description.  

A poset is called \dfn{locally finite} if each of its closed intervals is finite. A \dfn{lattice congruence} on a lattice $M$ is an equivalence relation $\equiv$ on $M$ such that if $x_1\equiv x_2$ and $y_1\equiv y_2$, then $x_1\wedge y_1\equiv x_2\wedge y_2$ and $x_1\vee y_1\equiv x_2\vee y_2$. Lattice congruences have received an enormous amount of attention; a standard reference for the subject is \cite{Gratzer}. If $M$ is a locally finite lattice with a minimal element and $\equiv$ is a lattice congruence on $M$, then each congruence class of $\equiv$ is a closed interval. For each $x\in M$, we denote by $\pi_\downarrow(x)$ the minimal element of the congruence class containing $x$. The map $\pi_\downarrow$ is order-preserving and idempotent. A \dfn{sublattice} of $M$ is a subset $M'\subseteq M$ such that $x\wedge y\in M'$ and $x\vee y\in M'$ for all $x,y\in M'$. 

\begin{lemma}\label{LemMeet1}
If $\equiv$ is a lattice congruence on a locally finite lattice $M$ such that the set $M'=\{\pi_\downarrow(x):x\in M\}$ is a sublattice of $M$, then $\pi_\downarrow\left(\bigwedge A\right)=\bigwedge\{\pi_\downarrow(a):a\in A\}$ for every finite set $A\subseteq M$.  
\end{lemma}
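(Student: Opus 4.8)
The plan is to reduce the whole statement to the binary identity $\pi_\downarrow(x\wedge y)=\pi_\downarrow(x)\wedge\pi_\downarrow(y)$ and then induct on $|A|$. Before doing anything else, I would record two elementary facts that follow directly from the setup. First, since $\pi_\downarrow(x)$ is defined to be the minimal element of the congruence class containing $x$, the map $\pi_\downarrow$ takes a single value on each congruence class; in particular $x\equiv\pi_\downarrow(x)$ for every $x\in M$. Second, because $\pi_\downarrow$ is idempotent, every element $z\in M'$ is fixed by $\pi_\downarrow$: writing $z=\pi_\downarrow(w)$ gives $\pi_\downarrow(z)=\pi_\downarrow(\pi_\downarrow(w))=\pi_\downarrow(w)=z$. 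So $\pi_\downarrow$ restricts to the identity on $M'$.

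Next I would prove the binary case. Given $x,y\in M$, the relations $x\equiv\pi_\downarrow(x)$ and $y\equiv\pi_\downarrow(y)$ together with the definition of a lattice congruence give $x\wedge y\equiv\pi_\downarrow(x)\wedge\pi_\downarrow(y)$. Applying $\pi_\downarrow$ to both sides and using that $\pi_\downarrow$ is constant on congruence classes yields $\pi_\downarrow(x\wedge y)=\pi_\downarrow(\pi_\downarrow(x)\wedge\pi_\downarrow(y))$. This is where the sublattice hypothesis does all the work: because $M'$ is a sublattice and $\pi_\downarrow(x),\pi_\downarrow(y)\in M'$, the meet $\pi_\downarrow(x)\wedge\pi_\downarrow(y)$ again lies in $M'$ and is therefore fixed by $\pi_\downarrow$. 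Combining the last two sentences gives $\pi_\downarrow(x\wedge y)=\pi_\downarrow(x)\wedge\pi_\downarrow(y)$.

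Finally I would extend to an arbitrary (nonempty) finite set $A$ by induction on $|A|$. The case $|A|=1$ is immediate, since for $A=\{a\}$ both sides equal $\pi_\downarrow(a)$. For the inductive step, write $A=A'\cup\{a\}$ with $a\notin A'$, so that $\bigwedge A=\left(\bigwedge A'\right)\wedge a$. Applying the binary case and then the inductive hypothesis gives
\[
\pi_\downarrow\left(\bigwedge A\right)=\pi_\downarrow\left(\bigwedge A'\right)\wedge\pi_\downarrow(a)=\bigwedge\{\pi_\downarrow(a'):a'\in A'\}\wedge\pi_\downarrow(a)=\bigwedge\{\pi_\downarrow(a'):a'\in A\},
\]
which is the desired identity.

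The only genuine content is the binary case, and the main point to flag is that the sublattice hypothesis is essential: without it, $\pi_\downarrow(x)\wedge\pi_\downarrow(y)$ need not lie in $M'$ and hence need not be fixed by $\pi_\downarrow$, so the identity can fail. Everything past the binary step is routine bookkeeping with the induction.
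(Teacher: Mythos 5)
Your proof is correct, but it takes a genuinely different route from the paper's. The paper proves the identity for an arbitrary finite $A$ in a single pass, as two inequalities: $\pi_\downarrow\left(\bigwedge A\right)\leq\bigwedge\{\pi_\downarrow(a):a\in A\}$ follows from $\pi_\downarrow$ being order-preserving, and for the reverse inequality one notes $\bigwedge\{\pi_\downarrow(a):a\in A\}\leq\bigwedge A$ (since $\pi_\downarrow(a)\leq a$), applies $\pi_\downarrow$ to both sides, and then uses exactly the step you isolated as the crux: the sublattice hypothesis places $\bigwedge\{\pi_\downarrow(a):a\in A\}$ in $M'$, where idempotence makes $\pi_\downarrow$ act as the identity. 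You instead prove the binary identity directly from the congruence axiom --- $x\wedge y\equiv\pi_\downarrow(x)\wedge\pi_\downarrow(y)$, then apply $\pi_\downarrow$, which is constant on classes --- and extend by induction on $|A|$; all of these steps check out. The trade-off: your argument makes explicit that only compatibility of $\equiv$ with meets is used (a meet-semilattice congruence, together with closure of $M'$ under meets, would suffice for this lemma), whereas the paper never invokes the congruence axiom directly, relying instead on the previously stated facts that $\pi_\downarrow$ is order-preserving and idempotent; conversely, the paper's version dispenses with the induction and the associativity bookkeeping, and it cleanly exposes that the sublattice hypothesis is needed only for the ``$\geq$'' direction. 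One small point in your favor: you restrict to nonempty $A$, which is the right reading --- for $A=\emptyset$ the meet would be a maximum element of $M$ and the identity can fail (take $M$ a two-element chain with the full congruence, so $\pi_\downarrow(\widehat 1)=\widehat 0$) --- and the paper's sole application of the lemma, to the cover set of a non-minimal element, likewise presumes $A\neq\emptyset$.
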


\begin{proof}
The map $\pi_\downarrow:M\to M$ is order-preserving, so $\pi_\downarrow(\bigwedge A)\leq\pi_\downarrow(a)$ for all $a\in A$. Hence, $\pi_\downarrow(\bigwedge A)\leq\bigwedge\{\pi_\downarrow(a):a\in A\}$. To prove the reverse inequality, first note that $\bigwedge\{\pi_\downarrow(a):a\in A\}\leq \bigwedge A$. This implies that $\pi_\downarrow(\bigwedge\{\pi_\downarrow(a):a\in A\})\leq \pi_\downarrow(\bigwedge A)$. Since $M'$ is a sublattice of $M$, we have $\bigwedge\{\pi_\downarrow(a):a\in A\}\in M'$. The map $\pi_\downarrow$ is idempotent, so it acts as the identity on $M'$. Hence, $\pi_\downarrow(\bigwedge\{\pi_\downarrow(a):a\in A\})=\bigwedge\{\pi_\downarrow(a):a\in A\}$. This shows that $\bigwedge\{\pi_\downarrow(a):a\in A\}\leq\pi_\downarrow(\bigwedge A)$.
\end{proof}

\begin{theorem}\label{ThmMeet1}
Let $M$ be a locally finite lattice with a minimal element. Let $\equiv$ be a lattice congruence on $M$ such that the set $M'=\{\pi_\downarrow(x):x\in M\}$ is a sublattice of $M$. Then $\Pop_{M'}(x)=\pi_\downarrow(\Pop_M(x))$ for all $x\in M'$. 
\end{theorem}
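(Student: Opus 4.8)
The plan is to rewrite both $\Pop_{M'}(x)$ and $\pi_\downarrow(\Pop_M(x))$ as meets of explicit sets and then show those sets coincide. Fix $x\in M'$, so that $\pi_\downarrow(x)=x$ by idempotence. Because $M$ has a minimal element $\widehat 0$ and is locally finite, the interval $[\widehat 0,x]$ is finite, so $x$ has only finitely many lower covers and the set $A=\{z\in M:z\lessdot x\}\cup\{x\}$ is finite. Applying Lemma~\ref{LemMeet1} to $A$ gives
\[
\pi_\downarrow(\Pop_M(x))=\bigwedge\{\pi_\downarrow(a):a\in A\}=\bigwedge\bigl(\{\pi_\downarrow(z):z\lessdot x\text{ in }M\}\cup\{x\}\bigr).
\]
On the other side, since $M'$ is a sublattice of $M$, finite meets computed in $M'$ agree with those computed in $M$, so
\[
\Pop_{M'}(x)=\bigwedge\bigl(\{y\in M':y\lessdot x\text{ in }M'\}\cup\{x\}\bigr).
\]
Writing $S=\{y\in M':y\lessdot x\text{ in }M'\}$ and $T=\{\pi_\downarrow(z):z\lessdot x\text{ in }M\}$, it therefore suffices to prove $S=T$.

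For the inclusion $S\subseteq T$, I would take $y\lessdot x$ in $M'$ and, using local finiteness, choose a saturated chain from $y$ to $x$ in $M$, letting $z$ be the element just below $x$, so that $y\le z\lessdot x$ in $M$. Order-preservation of $\pi_\downarrow$ then gives $y=\pi_\downarrow(y)\le\pi_\downarrow(z)\le z<x$, and since $\pi_\downarrow(z)\in M'$ lies weakly between the cover $y$ and $x$, it must equal $y$; hence $y=\pi_\downarrow(z)\in T$.

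For the reverse inclusion $T\subseteq S$, the key step, and the step I expect to be the main obstacle, is the following projection-of-covers fact: if $a\lessdot b$ in $M$ and $a\not\equiv b$, then $\pi_\downarrow(a)\lessdot\pi_\downarrow(b)$ in $M'$. To prove it, set $a'=\pi_\downarrow(a)$ and $b'=\pi_\downarrow(b)$; since the classes differ we have $a'<b'$, and I would suppose toward a contradiction that some $c'\in M'$ satisfies $a'<c'<b'$. Because $c'\le b'\le b$ and $a\le b$, the join $a\vee c'$ lies in $[a,b]$, so the cover relation $a\lessdot b$ forces $a\vee c'\in\{a,b\}$. If $a\vee c'=a$, then $c'\le a$, whence $c'=\pi_\downarrow(c')\le\pi_\downarrow(a)=a'$, contradicting $a'<c'$. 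If $a\vee c'=b$, then applying the congruence to $a\equiv a'$ yields $b=a\vee c'\equiv a'\vee c'=c'$ (using $a'\le c'$), so $b'=\pi_\downarrow(b)=\pi_\downarrow(c')=c'$, contradicting $c'<b'$. Either way we reach a contradiction, establishing the fact; note that this is exactly the point where the defining compatibility of the lattice congruence with joins is used.

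Finally I would apply this fact with $b=x$ (so $b'=x$) to each $z\lessdot x$ in $M$: since $\pi_\downarrow(z)\le z<x=\pi_\downarrow(x)$ we have $z\not\equiv x$, and the fact gives $\pi_\downarrow(z)\lessdot x$ in $M'$, i.e.\ $\pi_\downarrow(z)\in S$. This proves $T\subseteq S$, hence $S=T$, and comparing the two displayed meets yields $\Pop_{M'}(x)=\pi_\downarrow(\Pop_M(x))$. The only genuinely nontrivial ingredient is the projection-of-covers fact; the rest is bookkeeping with order-preservation, local finiteness, and the sublattice hypothesis.
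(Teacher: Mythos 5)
Your proof is correct and takes essentially the same route as the paper: both reduce, via Lemma~\ref{LemMeet1} and the sublattice hypothesis, to the set equality $\{\pi_\downarrow(z):z\lessdot x\}=\{y\in M':y\lessdot_{M'}x\}$, and both containments are established by the same arguments (your projection-of-covers fact is the paper's forward-containment argument, stated for a general cover $a\lessdot b$ with $a\not\equiv b$ and then specialized to $b=x\in M'$). The only cosmetic differences are that you handle $x=\widehat 0$ uniformly by keeping $x$ inside the meet-sets rather than splitting off that case, and in the key contradiction you dispose of the branch $c'\le a$ by monotonicity of $\pi_\downarrow$, where the paper instead rules out $x'\le y$ using that congruence classes are closed intervals.
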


\begin{proof}
Fix $x\in M'$. If $x$ is the minimal element of $M$ (equivalently, of $M'$), then certainly $\Pop_{M'}(x)=x=\pi_\downarrow(\Pop_M(x))$. Hence, we may assume $x$ is not the minimal element of $M$. The hypothesis that $M$ is locally finite guarantees that $\{y\in M:y\lessdot x\}$ is finite. According to Lemma~\ref{LemMeet1}, we have \[\pi_\downarrow(\Pop_M(x))=\pi_\downarrow\left(\bigwedge\{y\in M:y\lessdot x\}\right)=\bigwedge\{\pi_\downarrow(y):y\lessdot x\}.\] Since $M'$ is a sublattice of $M$, the set $\{\pi_\downarrow(y):y\lessdot x\}$ has the same meet in $M$ as in $M'$. On the other hand, $\Pop_{M'}(x)=\bigwedge\{z\in M':z\lessdot_{M'} x\}$, where $z\lessdot_{M'}x$ means $z$ is covered by $x$ in $M'$. Thus, it suffices to prove that $\{\pi_\downarrow(y):y\lessdot x\}=\{z\in M':z\lessdot_{M'} x\}$. 

Suppose $y\in M$ is such that $y\lessdot x$. Then $\pi_\downarrow(y)\in M'$ and $\pi_\downarrow(y)<x$. We want to prove that $\pi_\downarrow(y)\lessdot_{M'}x$. Assume, by way of contradiction, that there exists $x'\in M'$ such that $\pi_\downarrow(y)<x'<x$. We have $x'\not\equiv\pi_\downarrow(y)$, so $x'$ cannot lie in the closed interval between $\pi_\downarrow(y)$ and $y$. Hence, $x'\not\leq y$. This means that $x'\vee y>y$. Since $x>x'$ and $x>y$, we have $x\geq x'\vee y>y$. As $x$ covers $y$, this implies that $x'\vee y=x$. Since $\pi_\downarrow(y)\equiv y$ and $\equiv$ is a lattice congruence, $x'=x'\vee \pi_\downarrow(y)\equiv x'\vee y=x$. However, this means that $x$ is not the minimal element of its congruence class, contradicting the fact that it is in $M'$. This proves that $\pi_\downarrow(y)\lessdot_{M'}x$. Consequently, $\{\pi_\downarrow(y):y\lessdot x\}\subseteq\{z\in M':z\lessdot_{M'} x\}$. 

To prove the reverse containment, suppose $z\in M'$ is covered by $x$ in $M'$. We must show that $z=\pi_\downarrow(y)$ for some $y\in M$ such that $y\lessdot x$ in $M$. Since $z<x$ in $M$, there exists $y\in M$ such that $z\leq y\lessdot x$. We have $z=\pi_\downarrow(z)\leq \pi_\downarrow(y)\leq y<x$, so the relations $z\leq\pi_\downarrow(y)<x$ hold in $M'$. As $z$ is covered by $x$ in $M'$, this implies that $z=\pi_\downarrow(y)$.   
\end{proof}

\begin{remark}
Theorem~\ref{ThmMeet1} concerns the downward projection map $\pi_\downarrow$ and the semilattice pop-stack-sorting operator $\Pop_M$. Each of these would still be defined if we only assumed $\equiv$ was a semilattice congruence (i.e., an equivalence relation compatible with meets, but not necessarily joins). However, the hypothesis that $\equiv$ is a lattice congruence is necessary in Theorem~\ref{ThmMeet1}. Indeed, consider the following $4$-element lattice $M$: 
\[\begin{array}{l}\includegraphics[height=2.1cm]{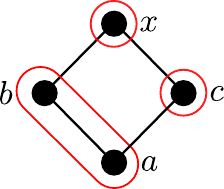}.\end{array}\] Let $\equiv$ be the semilattice congruence on $M$ whose congruence classes are circled. The chain $M'=\{a<c<x\}$ is a sublattice of $M$. However, $\equiv$ is not a lattice congruence, and $\Pop_{M'}(x)=c\neq a=\pi_\downarrow(\Pop_M(x))$.  
\end{remark}

When the hypotheses of Theorem~\ref{ThmMeet1} are satisfied, we can transfer information about $\Pop_M$ into information about $\Pop_{M'}$. For example, if $y\in M$ is in a singleton congruence class of $\equiv$, then $\Pop_{M'}^{-1}(y)=M'\cap\Pop_M^{-1}(y)$. For a second example, suppose $M$ is the right weak order on a finite irreducible Coxeter group $W$. Following \cite{DefantCoxeterPop}, we say a map $f:M\to M$ is \dfn{compulsive} if $f(x)\leq\Pop_M(x)$ for all $x\in M$. One of the main theorems from \cite{DefantCoxeterPop} states that if $f:M\to M$ is compulsive, then every forward orbit of $M$ has size at most the Coxeter number of $W$. Suppose $\equiv$ is a lattice congruence on $M$ such that the set $M'=\{\pi_\downarrow(x):x\in M\}$ is a sublattice of $M$. The map $\pi_\downarrow\circ\Pop_M:M\to M$ is compulsive, and its restriction to $M'$ is $\Pop_{M'}$ by Theorem~\ref{ThmMeet1}. Therefore, every forward orbit of $\Pop_{M'}$ has size at most the Coxeter number of $W$. 

\medskip 

Throughout the rest of this section, we explain how to use Theorem~\ref{ThmMeet1} to provide a combinatorial description of the semilattice pop-stack-sorting operators on Tamari lattices. Here, we view the Tamari lattices as the restrictions of the right weak orders on symmetric groups to $312$-avoiding permutations; in Section~\ref{SecTamari}, we give a different definition of Tamari lattices in terms of Dyck paths. 

Let $S_n$ denote the set of permutations of the set $[n]:=\{1,\ldots,n\}$; we write a permutation $w\in S_n$ as a word $w(1)\cdots w(n)$ in one-line notation. A \dfn{left inversion} of $w$ is a pair $(i,j)$ such that $1\leq i<j\leq n$ and $w^{-1}(i)>w^{-1}(j)$. The \dfn{right weak order} on $S_n$ is the partial order $\leq_R$ given by saying $v\leq_R w$ if every left inversion of $v$ is also a left inversion of $w$. It is well known that the right weak order is a lattice \cite{BjornerBrenti}. A \dfn{descending run} of a permutation $w$ is a maximal decreasing subsequence of $w$. For example, the descending runs of $4258617$ are $42$, $5$, $861$, and $7$. The \dfn{pop-stack-sorting map} is the operator on $S_n$ that reverses the descending runs of a permutation while keeping entries in different descending runs in the same relative order. For instance, it sends $4258617$ to $2451687$. As discussed in \cite{DefantCoxeterPop}, the pop-stack-sorting map on $S_n$ is equal to the semilattice pop-stack-sorting operator $\Pop_{S_n}$ associated to the right weak order on $S_n$.

We say entries $c,a,b$ form a \dfn{$312$-pattern} in a permutation $w$ if they appear in the order $c,a,b$ in $w$ and $a<b<c$. For example, the entries $6,1,3$ form a $312$-pattern in $264513$. We say $w$ is \dfn{$312$-avoiding} if no three entries form a $312$ pattern in $w$. Let $\Av_n(312)$ denote the set of $312$-avoiding permutations in $S_n$. Bj\"orner and Wachs \cite{BjornerWachsShellable} proved that $\Av_n(312)$ is a sublattice of the right weak order on $S_n$ isomorphic to the Tamari lattice $\Tam_n$. Define a relation $\rhd$ on $S_n$ by saying $w\rhd v$ if we can write $w=XcaYbZ$ and $v=XacYbZ$, where $X,Y,Z$ are words and $a,b,c$ are numbers such that $c,a,b$ form a $312$-pattern in $w$. Observe that if $w\rhd v$, then $v\leq_R w$. Let $\equiv_{\syl}$ be the reflexive, symmetric, and transitive closure of $\rhd$. The equivalence relation $\equiv_{\syl}$ is called the \dfn{sylvester congruence} on (the right weak order of) $S_n$ \cite{Hivert}. Reading \cite{ReadingCambrian} observed that $\equiv_{\syl}$ is a lattice congruence on the right weak order of $S_n$. Furthermore, every sylvester class contains a unique $312$-avoiding permutation, which is the minimal element of the congruence class. Thus, if we are given $w\in S_n$, then we can compute $\pi_\downarrow(w)$ (the unique $312$-avoiding permutation that is sylvester-equivalent to $w$) by constructing a chain $w\rhd v_1\rhd v_2\rhd\cdots\rhd v_k=\pi_\downarrow(w)$. According to Theorem~\ref{ThmMeet1}, we have 
\begin{equation}\label{EqMeet1}
\Pop_{\Av_n(312)}(w)=\pi_\downarrow\left(\Pop_{S_n}(w)\right)
\end{equation}
for all $w\in \Av_n(312)$, where $\Pop_{\Av_n(312)}$ is the semilattice pop-stack-sorting operator on the Tamari lattice $\Av_n(312)$. 

As an application of \eqref{EqMeet1}, let us determine which elements of $\Av_n(312)$ have the largest forward orbits under $\Pop_{\Av_n(312)}$.

\begin{proposition}\label{PropMeet1}
For $n\geq 2$ and $w\in\Av_n(312)$, we have \[\left|O_{\Pop_{\Av_n(312)}}(w)\right|\leq n\] where equality holds if and only if $w$ ends in the suffix $n1$. The number of elements of $\Av_n(312)$ that end in the suffix $n1$ is the Catalan number $C_{n-2}=\frac{1}{n-1}\binom{2(n-2)}{n-2}$. 
\end{proposition}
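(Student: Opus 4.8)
The plan is to attack the three assertions of Proposition~\ref{PropMeet1} in sequence, using the combinatorial description of $\Pop_{\Av_n(312)}$ afforded by equation~\eqref{EqMeet1}. The key tool is that $\Pop_{\Av_n(312)}(w)=\pi_\downarrow(\Pop_{S_n}(w))$, so each iteration first reverses the descending runs of $w$ (the ordinary pop-stack-sorting map) and then projects down to the $312$-avoiding representative of the resulting sylvester class. First I would recall the easy general bound: since the right weak order on $S_n$ has Coxeter number $n$, and since by the discussion after Theorem~\ref{ThmMeet1} every forward orbit of $\Pop_{\Av_n(312)}$ has size at most the Coxeter number of $S_n$, we immediately get $\left\lvert O_{\Pop_{\Av_n(312)}}(w)\right\rvert\le n$. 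This disposes of the inequality with no extra work.

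The substantive part is the equality characterization. For the ``if'' direction, I would take $w\in\Av_n(312)$ ending in $n1$ and track its forward orbit explicitly. The entries $n$ and $1$ at positions $n-1$ and $n$ force a long descending run at the tail, and I would argue that each application of $\Pop_{\Av_n(312)}$ strictly decreases the rank (number of left inversions) by a controlled amount until the identity is reached, producing an orbit that realizes the full length $n$. The cleanest route is to exhibit a specific chain: show that the entry $1$ migrates leftward one position per iteration (or that $n$ migrates rightward), so that exactly $n-1$ nontrivial applications are needed before stabilizing, giving an orbit of size $n$. For the ``only if'' direction, I would show contrapositively that if $w$ does not end in $n1$, then $\Pop_{\Av_n(312)}$ collapses $w$ to $\widehat 0$ faster; the absence of the $n1$ suffix means the descending-run structure at the tail is ``too short,'' so the first one or two iterations already remove enough inversions that the orbit cannot attain length $n$. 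The main obstacle I anticipate is controlling the interaction between run-reversal and the sylvester projection $\pi_\downarrow$: reversing descending runs in $S_n$ may create $312$-patterns, and I must verify precisely how $\pi_\downarrow$ repairs them so that the rank drop per step is exactly what the orbit-length count requires. Making the per-step rank analysis airtight is where the real care goes.

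Finally, for the enumeration I would set up a direct bijection. An element of $\Av_n(312)$ ending in the suffix $n1$ is determined by its first $n-2$ entries, which must form a $312$-avoiding permutation of $\{2,\ldots,n-1\}$ that remains $312$-avoiding when followed by $n$ and then $1$. I would check that appending $n1$ imposes no additional constraint beyond $312$-avoidance of the prefix on $\{2,\dots,n-1\}$: the entry $n$ is the maximum, so it can only play the role of the ``$3$'' in a $312$-pattern, and the only entry after it is $1$, which cannot serve as both the ``$1$'' and the ``$2$,'' so no new forbidden pattern is introduced. Hence such permutations are in bijection with $\Av_{n-2}(312)$, which is counted by the Catalan number $C_{n-2}=\frac{1}{n-1}\binom{2(n-2)}{n-2}$. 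This last step is routine once the pattern-avoidance bookkeeping is stated carefully, so I expect the enumeration to be the least troublesome of the three parts.
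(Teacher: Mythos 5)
Your bound and your enumeration are both sound: the inequality via the compulsive-map result (the Coxeter number of $S_n$ is $n$) is exactly the shortcut the paper itself points out, and your bijection stripping the suffix $n1$ and decrementing the remaining entries is the paper's map $\iota$ verbatim, with the pattern-avoidance check done correctly. The genuine gap is in the equality characterization, and it is most serious in the ``only if'' direction. Your proposed mechanism --- that the absence of the $n1$ suffix means the first iterations ``already remove enough inversions that the orbit cannot attain length $n$'' --- rests on an inversion-count (rank) heuristic that fails outright: the decreasing permutation $n(n-1)\cdots 1$ is $312$-avoiding and has the maximum number of left inversions, yet its orbit has size $2$, since $\Pop_{S_n}$ sends it directly to the identity. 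Rank drop per step is neither constant nor usefully monotone, so no bookkeeping of inversions alone can separate orbits of size $n$ from shorter ones. What is actually needed are two statistics that behave monotonically under the relation $\rhd$ defining the sylvester projection: the tail length $\tl$, which is weakly increasing along $\rhd$-moves and strictly increases under each application of $\Pop_{\Av_n(312)}$ until it reaches $n-1$ (this both reproves the bound and pins down the $(n-2)$-nd iterate as $2134\cdots n$ whenever the orbit has size $n$), and the position $w^{-1}(1)$ of the entry $1$, which never increases under $\rhd$-moves and strictly decreases under $\Pop_{S_n}$ whenever $1$ is not first. Together these force $u_t^{-1}(1)=n-t$ along a size-$n$ orbit, hence $u_0^{-1}(1)=n$ and a last descending run of size exactly $2$, which by $312$-avoidance must begin with $n$. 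Your sketch identifies neither statistic.

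The ``if'' direction also stops short of a proof. You correctly guess that the entry $1$ migrates leftward one position per iteration, but that is precisely the lemma whose difficulty you flag without resolving: one must show that if $w\in\Av_n(312)$ has suffix $(n-k)1(n-k+1)\cdots n$, then $\Pop_{\Av_n(312)}(w)$ has suffix $(n-k-1)1(n-k)\cdots n$. This needs two facts: first, $312$-avoidance forces the last descending run of the prefix to begin with $n-k-1$, so $\Pop_{S_n}(w)$ already carries the desired suffix; second --- and this is exactly the interaction between run-reversal and $\pi_\downarrow$ you worry about --- no $\rhd$-move in a chain computing $\pi_\downarrow\left(\Pop_{S_n}(w)\right)$ can displace the entries $(n-k-1)1(n-k)\cdots n$. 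Once that is in place, the conclusion is that $\Pop_{\Av_n(312)}^{n-2}$ maps the set of permutations ending in $n1$ into $\{2134\cdots n\}\neq\{e\}$, so the orbit genuinely has size $n$; note that your phrasing (``exactly $n-1$ nontrivial applications are needed'') presupposes rather than proves this non-stabilization. In short, the architecture of your plan matches the paper's, but the two invariants that make it airtight are missing, and the inversion-based substitute you propose for them would fail.
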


\begin{proof}
Define the \dfn{tail length} of a permutation $v\in S_n$, denoted $\tl(v)$, to be the largest integer $k\in\{0,\ldots,n\}$ such that $v(i)=i$ for all $i\in\{n-k+1,\ldots,n\}$. It is straightforward to check that if we have a relation $u\rhd v$, then $\tl(u)\leq\tl(v)$ and $u^{-1}(1)\geq v^{-1}(1)$. Hence, 
\begin{equation}\label{EqMeet2}
\tl(u)\leq\tl(\pi_\downarrow(u))\quad\text{and}\quad u^{-1}(1)\geq (\pi_\downarrow(u))^{-1}(1)
\end{equation} for all $u\in S_n$. 

Consider $u_0\in \Av_n(312)$. To ease notation, let $u_t=\Pop_{\Av_n(312)}^t(u_0)$ and $v_t=\Pop_{S_n}(u_t)$. By \eqref{EqMeet1}, we have $u_{t+1}=\pi_\downarrow(v_t)$ for all $t\geq 0$. Suppose $u_0$ has tail length $k<n-1$. We can write $u_0=X(n-k)Y(n-k+1)(n-k+2)\cdots n$ for some words $X$ and $Y$. Because $u_0$ is $312$-avoiding, $Y$ must be decreasing. This shows that $(n-k)Y$ is a descending run of $u_0$, so $v_0=\Pop_{S_n}(u_0)$ ends in the suffix $\rev(Y)(n-k)(n-k+1)\cdots n$, where $\rev(Y)$ is the reverse of $Y$. Hence, $\tl(v_0)\geq k+1>\tl(u_0)$. By combining this observation with \eqref{EqMeet2}, we find that $\tl(u_1)>\tl(u_0)$. Applying the same argument to $u_1$ shows that if $\tl(u_1)<n-1$, then $\tl(u_2)>\tl(u_1)$. By repeating this argument inductively, we find that $\tl(u_t)\geq t$ for all $0\leq t\leq n-1$. In particular, $\tl(u_{n-1})\geq n-1$. The only permutation in $S_n$ whose tail length is at least $n-1$ is the identity permutation $e=123\cdots n$, so $u_{n-1}=\Pop_{\Av_n(312)}^{n-1}(u_0)=e$. Since $e$ is a fixed point of $\Pop_{\Av_n(312)}$, the forward orbit of $u_0$ has size at most $n$ (this is also a consequence of the result about compulsive maps mentioned above since the Coxeter number of $S_n$ is $n$). 

Now suppose the forward orbit of $u_0$ under $\Pop_{\Av_n(312)}$ has size $n$. Then $u_{n-2}\neq u_{n-1}=e$. We saw above that $\tl(u_{n-2})\geq n-2$, so we must have $u_{n-2}=21345\cdots n$. In particular, $u_{n-2}^{-1}(1)=2$. If $u_t$ does not start with the entry $1$, then the entry $1$ will move to the left when we apply $\Pop_{S_n}$ to $u_t$. Appealing to \eqref{EqMeet2}, we find that \[u_{t+1}^{-1}(1)=\left(\pi_\downarrow(v_t)\right)^{-1}(1)\leq v_t^{-1}(1)<u_t^{-1}(1)\] whenever $u_t^{-1}(1)>1$. Since $u_{n-2}^{-1}(1)=2$, this means we must have $u_t^{-1}(1)=n-t$ and $v_t^{-1}(1)=n-t-1$ for all $0\leq t\leq n-2$. In particular, $u_0^{-1}(1)=n$ and $v_0^{-1}(1)=n-1$. This implies that the last descending run of $u_0$ has size $2$. Because $u_0$ is $312$-avoiding, its last descending run starts with $n$, so $u_0$ ends with the suffix $n1$. 

We now prove the converse. For $0\leq k\leq n-2$, let $Z_k$ be the set of permutations in $\Av_n(312)$ of the form $X(n-k)1(n-k+1)(n-k+2)\cdots n$ for some word $X$. Note that $Z_0$ is the set of permutations in $\Av_n(312)$ that end in the suffix $n1$; our goal is to prove that each element of $Z_0$ has a forward orbit under $\Pop_{\Av_n(312)}$ of size $n$. Equivalently, we need to prove that $e\not\in\Pop_{\Av_n(312)}^{n-2}(Z_0)$. Suppose $w=X(n-k)1(n-k+1)(n-k+2)\cdots n\in Z_k$ for some $k\leq n-3$. Because $w$ is $312$-avoiding, the last descending run in $X$ starts with the entry $n-k-1$. It follows that $\Pop_{S_n}(w)$ is of the form $Y(n-k-1)1(n-k)(n-k+1)\cdots n$. If we apply the map $\pi_\downarrow$ to $\Pop_{S_n}(w)$ by constructing a chain $\Pop_{S_n}(w)\rhd w_1\rhd w_2\rhd\cdots\rhd \pi_\downarrow(\Pop_{S_n}(w))$, then there will not be any step in the chain when the entries $(n-k-1)1(n-k)(n-k+1)\cdots n$ move. In other words, the permutation $\Pop_{\Av_n(312)}=\pi_\downarrow(\Pop_{S_n}(w))$ is of the form $Y'(n-k-1)1(n-k)(n-k+1)\cdots n$ for some word $Y'$. This shows that $\Pop_{\Av_n(312)}$ maps $Z_k$ into $Z_{k+1}$. As this is true for all $0\leq k\leq n-3$, we find that $\Pop_{\Av_n(312)}^{n-2}(Z_0)\subseteq Z_{n-2}=\{21345\cdots n\}$, so $e\not\in\Pop_{\Av_n(312)}^{n-2}(Z_0)$. 

We have proven the first statement of the proposition. We are left to enumerate the permutations in $Z_0$. Given a permutation $w=Xn1\in Z_0$, let $\iota(w)$ be the permutation in $\Av_{n-2}(312)$ obtained by decreasing each of the entries in $X$ by $1$. The map $\iota$ is a bijection from $Z_0$ to $\Av_{n-2}(312)$. It is well known that $|\!\Av_{n-2}(312)|$ is the Catalan number $C_{n-2}$.    
\end{proof}

\begin{remark}
As discussed in \cite{DefantCoxeterPop, DefantCoxeterStack}, one can define West's stack-sorting map $\mathtt{s}$ (see \cite{BonaSurvey, West} for its usual definition) by $\mathtt{s}(w)=w\circ\pi_\downarrow(w^{-1})$, where $\circ$ is the group operation (i.e., composition) in $S_n$ and $\pi_\downarrow$ denotes the downward projection map associated to the sylvester congruence on the right weak order (the articles \cite{DefantCoxeterPop,DefantCoxeterStack} use the left weak order instead, so we have transferred from the left to the right weak order by taking inverses of permutations). Combining this fact with \eqref{EqMeet1}, we find that the semilattice pop-stack-sorting operator $\Pop_{\Av_n(312)}$ on the Tamari lattice $\Av_n(312)$ can be described in terms of the pop-stack-sorting map and West's stack-sorting map. More precisely, \[\Pop_{\Av_n(312)}(w)=\Pop_{S_n}(w)\circ\mathtt{s}\left(\Pop_{S_n}(w)^{-1}\right). \qedhere\]
\end{remark}

\begin{remark}
Other notable instances where Theorem~\ref{ThmMeet1} applies are provided by Cambrian congruences on the weak orders of finite Coxeter groups. Indeed, the minimal elements of the congruence classes of a Cambrian congruence form a sublattice of the weak order \cite{ReadingSortable}. 
\end{remark}

\section{$\nu$-Tamari Lattices}\label{SecTamari}

\subsection{Lattice Paths}\label{SubsecLattice}
In this article, a \dfn{lattice path} is a finite path in the plane that starts at the origin and uses $(0,1)$ steps and $(1,0)$ steps. A $(0,1)$ step is called a \dfn{north step} and is denoted by $\text{N}$, whereas a $(1,0)$ step is called an \dfn{east step} and is denoted by $\text{E}$. We identify lattice paths with finite words over the alphabet $\{\text{E},\text{N}\}$. We often use exponents to denote concatenation of words; for example, $(\text{NE}^3)^2=\text{NEEENEEE}$. Given a lattice path $\nu$, we write $\Tam(\nu)$ for the set of lattice paths lying weakly above $\nu$ that have the same endpoints as $\nu$. For example, if $\nu=\text{ENNEEEENNE}$ is the lattice path shown in black in Figure~\ref{FigMeet2}, then the path $\mu=\text{NENENEEENE}$ shown in red is in $\Tam(\nu)$. 

\begin{figure}[ht]
  \begin{center}{\includegraphics[height=4.2cm]{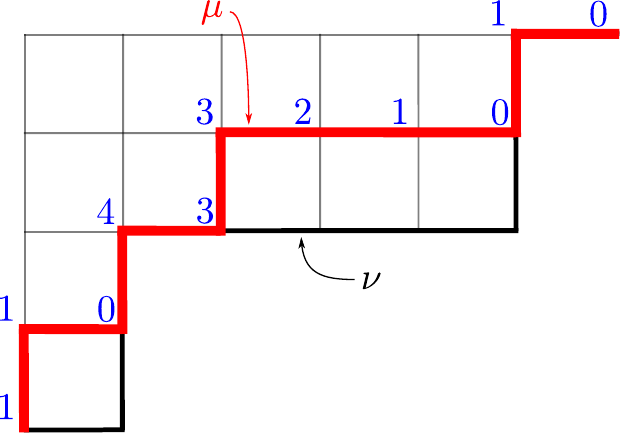}}
  \end{center}
  \caption{The lattice path $\mu=\text{NENENEEENE}$ in the $\nu$-Tamari lattice $\Tam(\nu)$, where $\nu=\text{ENNEEEENNE}$. Each lattice point $p$ is labeled with its horizontal distance. }\label{FigMeet2}
\end{figure}

Fix a lattice path $\nu$, and let $\mu\in\Tam(\nu)$. For each lattice point $p$ on $\mu$, define the \dfn{horizontal distance} of $p$ to be the maximum number of east steps that can be taken, starting at $p$, before crossing $\nu$. See Figure~\ref{FigMeet2} for an illustration of this definition. Now suppose $p$ is a lattice point on $\mu$ that is preceded by an east step and followed by a north step in $\mu$. Let $p'$ be the first lattice point on $\mu$ that appears after $p$ and has the same horizontal distance as $p$. If we let $D_{[p,p']}$ be the subpath of $\mu$ that starts at $p$ and ends at $p'$, then we can write $\mu=X\text{E}D_{[p,p']}Y$ for some lattice paths $X$ and $Y$. Let $\mu'=XD_{[p,p']}EY$. Then $\mu'\in \Tam(\nu)$. In this case, we write $\mu\lessdot \mu'$. See Figure~\ref{FigMeet3}. 

\begin{figure}[ht]
  \begin{center}{\includegraphics[height=3.923cm]{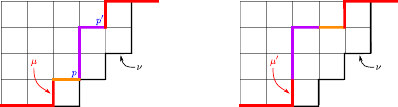}}
  \end{center}
  \caption{The lattice path $\mu$ on the left is covered by the lattice path $\mu'$ on the right in $\Tam(\nu)$.}\label{FigMeet3}
\end{figure}

Let $\leq$ be the partial order on $\Tam(\nu)$ whose cover relations are the relations of the form $\mu\lessdot \mu'$ described in the previous paragraph. Pr\'eville-Ratelle and Viennot introduced this partial order in \cite{PrevilleViennot}, where they proved that the poset $(\Tam(\nu),\leq)$ is a lattice. In the special case when $\nu=(\text{NE}^m)^n$, the lattice $\Tam(\nu)$ is the $n^\text{th}$ \dfn{$m$-Tamari lattice}, which we denote by $\Tam_n(m)$. The elements of $\Tam_n(m)$ are called \dfn{$m$-ballot paths}. In the very special case when $\nu=(\text{NE})^n$, $\Tam(\nu)$ is the classical $n^\text{th}$ \dfn{Tamari lattice}, which we denote by $\Tam_n$. The elements of $\Tam_n$ are \dfn{Dyck paths}.  

Throughout this section, we will focus exclusively on the semilattice pop-stack-sorting operators on $\nu$-Tamari lattices, so we will often omit the subscript and write $\Pop$ instead of $\Pop_{\Tam(\nu)}$. 

\subsection{$\nu$-Bracket Vectors}
One crucial tool in our analysis is a reformulation of $\nu$-Tamari lattices due to Ceballos, Padrol and Sarmiento \cite{Ceballos2}. In what follows, we make the convention that vectors are $0$-indexed. For example, we would say that the number $5$ appears in positions $0$ and $3$ in the vector $(5,2,6,5,6)$.  

\begin{definition}\label{DefMeet2}
Fix a lattice path $\nu$ that starts at $(0,0)$ and ends at $(\ell-n,n)$. Let ${\bf b}(\nu)=(b_0(\nu),\ldots,b_{\ell}(\nu))$ be the vector obtained by reading the heights (i.e., $y$-coordinates) of the lattice points on $\nu$ in the order they appear in $\nu$. For $0\leq k\leq n$, let $f_k$ be the maximum index such that $b_{f_k}(\nu)=k$. We call $f_0,\ldots,f_n$ the \dfn{fixed positions} of $\nu$. A \dfn{$\nu$-bracket vector} is an integer vector $\vec{\mathsf{b}}=(\mathsf b_0,\ldots,\mathsf b_{\ell})$ such that:
\begin{enumerate}[(I)]
\item $\mathsf b_{f_k}=k$ for all $0\leq k\leq n$; 
\item $b_i(\nu)\leq \mathsf b_i\leq n$ for all $0\leq i\leq \ell$;
\item\label{Item3} if $\mathsf b_i=k$, then $\mathsf b_j\leq k$ for all $i+1\leq j\leq f_k$. 
\end{enumerate} 
Let $\Vecc(\nu)$ denote the set of $\nu$-bracket vectors.  
\end{definition}

It is often convenient to note that condition \eqref{Item3} can be replaced with the condition that $\vec{\mathsf b}$ avoids the pattern $121$, which means that there do not exist indices $i_1<i_2<i_3$ such that $\mathsf b_{i_1}=\mathsf b_{i_3}<\mathsf b_{i_2}$. It is also useful to keep in mind that if $\vec{\mathsf b}=(\mathsf b_0,\ldots,\mathsf b_\ell)\in\Vecc(\nu)$, then for each $0\leq k\leq n$, the sequence $\mathsf b_{f_{k-1}+1},\ldots,\mathsf b_{f_k}$ is weakly decreasing. 

Observe that if $\vec{\mathsf{b}}=(\mathsf b_0,\ldots,\mathsf b_{\ell})$ and $\vec{\mathsf{b}}'=(\mathsf b'_0,\ldots,\mathsf b'_{\ell})$ are $\nu$-bracket vectors, then so is their componentwise minimum $\min(\vec{\mathsf{b}},\vec{\mathsf{b}}')=(\min\{\mathsf b_0,\mathsf b_0'\},\ldots,\min\{\mathsf b_{\ell},\mathsf b_{\ell}'\})$. We make the convention throughout the rest of the paper that $f_{-1}=-1$. 

Suppose we are given a path $\mu\in\Tam(\nu)$. Let us write down a tuple $(\underline{\hphantom{1}},\underline{\hphantom{1}},\ldots,\underline{\hphantom{1}})$ with $\ell+1$ empty slots. We now traverse the path $\mu$. Whenever we arrive at a lattice point with height $k$, we insert a $k$ into the rightmost unoccupied slot that is not strictly to the right of the $f_k^\text{th}$ position. The end result of this procedure is a vector ${\bf b}(\mu)=(b_0(\mu),\ldots,b_{\ell}(\mu))$. For example, if $\mu=\nu$, then the vector ${\bf b}(\mu)$ constructed via this process is the same as the vector ${\bf b}(\nu)$ given in Definition~\ref{DefMeet2}. Although the vector ${\bf b}(\mu)$ depends on the fixed lattice path $\nu$, we suppress this dependence in the notation. 

\begin{example}
Suppose $\nu=\text{ENNEEEENNE}$ and $\mu=\text{NENENEEENE}$ are as depicted in Figure~\ref{FigMeet2}. Then ${\bf b}(\nu)=(0,0,1,2,2,2,2,2,3,4,4)$. The heights of the lattice points on $\mu$ are $0,1,1,2,2,3,3,3,3,$ $4,4$. The vectors appearing in the construction of ${\bf b}(\mu)$ are \[\hphantom{\to }\,(\underline{\hphantom{1}},\underline{\hphantom{0}},\underline{\hphantom{1}},\underline{\hphantom{3}},\underline{\hphantom{3}},\underline{\hphantom{3}},\underline{\hphantom{2}},\underline{\hphantom{2}},\underline{\hphantom{3}},\underline{\hphantom{4}},\underline{\hphantom{4}})\to (\underline{\hphantom{1}},0,\underline{\hphantom{1}},\underline{\hphantom{3}},\underline{\hphantom{3}},\underline{\hphantom{3}},\underline{\hphantom{2}},\underline{\hphantom{2}},\underline{\hphantom{3}},\underline{\hphantom{4}},\underline{\hphantom{4}})\to (\underline{\hphantom{1}},0,1,\underline{\hphantom{3}},\underline{\hphantom{3}},\underline{\hphantom{3}},\underline{\hphantom{2}},\underline{\hphantom{2}},\underline{\hphantom{3}},\underline{\hphantom{4}},\underline{\hphantom{4}})\hphantom{.}\] \[\to (1,0,1,\underline{\hphantom{3}},\underline{\hphantom{3}},\underline{\hphantom{3}},\underline{\hphantom{2}},\underline{\hphantom{2}},\underline{\hphantom{3}},\underline{\hphantom{4}},\underline{\hphantom{4}})\to (1,0,1,\underline{\hphantom{3}},\underline{\hphantom{3}},\underline{\hphantom{3}},\underline{\hphantom{2}},2,\underline{\hphantom{3}},\underline{\hphantom{4}},\underline{\hphantom{4}})\to (1,0,1,\underline{\hphantom{3}},\underline{\hphantom{3}},\underline{\hphantom{3}},2,2,\underline{\hphantom{3}},\underline{\hphantom{4}},\underline{\hphantom{4}})\hphantom{.}\]
\[\to (1,0,1,\underline{\hphantom{3}},\underline{\hphantom{3}},\underline{\hphantom{3}},2,2,3,\underline{\hphantom{4}},\underline{\hphantom{4}})\to (1,0,1,\underline{\hphantom{3}},\underline{\hphantom{3}},3,2,2,3,\underline{\hphantom{4}},\underline{\hphantom{4}})\to (1,0,1,\underline{\hphantom{3}},3,3,2,2,3,\underline{\hphantom{4}},\underline{\hphantom{4}})\hphantom{.}\]
\[\to (1,0,1,3,3,3,2,2,3,\underline{\hphantom{4}},\underline{\hphantom{4}})\to (1,0,1,3,3,3,2,2,3,\underline{\hphantom{4}},4)\to (1,0,1,3,3,3,2,2,3,4,4).\]
Thus, ${\bf b}(\mu)=(1,0,1,3,3,3,2,2,3,4,4)$. 
\end{example}

The utility of these definitions arises from the next theorem, which is due to Ceballos, Padrol, and Sarmiento. 

\begin{theorem}[\cite{Ceballos2}]\label{ThmBracket}
For each $\mu\in\Tam(\nu)$, the vector ${\bf b}(\mu)$ is a $\nu$-bracket vector. The map ${\bf b}:\Tam(\nu)\to\Vecc(\nu)$ is a bijection. Furthermore, for any paths $\mu,\mu'\in\Tam(\nu)$, we have ${\bf b}(\mu\wedge\mu')=\min({\bf b}(\mu),{\bf b}(\mu'))$. 
\end{theorem}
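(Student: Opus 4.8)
The plan is to prove the three assertions in order, using the bijection as the foundation and then deducing meet-compatibility from a stronger order-isomorphism statement. The backbone is a single invariant: the multiset of entries of ${\bf b}(\mu)$ equals the multiset of heights of the lattice points of $\mu$. This is immediate from the insertion procedure, which deposits exactly one copy of the height of each lattice point. Since a monotone path with the endpoints of $\nu$ is determined by the number of east steps it takes at each height, and since the height-multiset records exactly these numbers (each augmented by $1$), this invariant already forces ${\bf b}$ to be injective.

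For membership in $\Vecc(\nu)$, I would first record the counting consequence of $\mu$ lying weakly above $\nu$ with the same endpoints: the number of lattice points of $\mu$ of height at most $k$ is at most $f_k+1$ for every $k$, because $\mu$ reaches each height no later (in the horizontal direction) than $\nu$ does. This bound guarantees that the procedure never exhausts the eligible slots, so ${\bf b}(\mu)$ is well defined; moreover, since the points of height less than $k$ occupy only slots with index at most $f_{k-1}<f_k$, the first height-$k$ insertion must land in slot $f_k$, which is condition (I). Condition (II) holds because a value $k$ is placed in slot $i$ only when $i\le f_k$, and $i\le f_k$ forces $k\ge b_i(\nu)$ by monotonicity of the fixed positions. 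Condition (III) follows from the rightmost-free-slot rule: if a value $k$ occupied slots $i_1<i_3$ while an intervening slot $i_2$ received a larger value, then $i_2<i_3\le f_k$ and $i_2$ was vacant while the copies of $k$ were being inserted, contradicting the fact that $i_2$ would have been filled before $i_1$.

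Surjectivity, and hence the bijection, I would obtain from a uniqueness lemma: a $\nu$-bracket vector is determined by the multiset of its entries. Given $\vec{\mathsf b}\in\Vecc(\nu)$, the multiplicities $a_k=\#\{i:\mathsf b_i=k\}$ are positive by (I), so setting $e_k=a_k-1$ defines a monotone path $\mu$; the counting form of condition (II) shows $\mu$ lies weakly above $\nu$, whence $\mu\in\Tam(\nu)$. Now ${\bf b}(\mu)$ and $\vec{\mathsf b}$ are $\nu$-bracket vectors with the same entry-multiset, so the uniqueness lemma---provable from (I)--(III) by the same rightmost-first placement analysis used above---gives ${\bf b}(\mu)=\vec{\mathsf b}$. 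For the meet-compatibility, I would reduce to showing that ${\bf b}$ is an isomorphism from $(\Tam(\nu),\le)$ onto $(\Vecc(\nu),\le)$ under the componentwise order. Granting this, the meet of $\mu$ and $\mu'$ corresponds to the meet in $(\Vecc(\nu),\le)$; since the componentwise minimum of two $\nu$-bracket vectors is again a $\nu$-bracket vector (as observed before the theorem), this meet is exactly $\min({\bf b}(\mu),{\bf b}(\mu'))$, which is the assertion.

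The order-isomorphism, and with it the main obstacle, rests on a cover-relation dictionary. Inspecting a single cover $\mu\lessdot\mu'$, one sees that it moves a single east step from some height $h$ up to a strictly greater height, so the entry-multisets of ${\bf b}(\mu)$ and ${\bf b}(\mu')$ differ only by replacing one copy of $h$ with one larger value; the difficulty is that the rightmost-first rule can reshuffle the positions of many entries rather than changing a single coordinate, so showing that every coordinate weakly increases (order-preservation) requires careful bookkeeping of these positions. The reflection direction demands the converse matching---that each cover of the componentwise order on $\Vecc(\nu)$ lifts to a cover of $\Tam(\nu)$---which is where I expect the bulk of the work to lie. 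Everything else reduces cleanly to the bound ``at most $f_k+1$'' and the rightmost-first rule, but establishing this bijection of cover relations is the technical heart of the statement (and is the content carried out in \cite{Ceballos2}).
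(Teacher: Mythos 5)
A preliminary point of reference: the paper does not prove this statement at all --- it is imported verbatim with attribution to Ceballos, Padrol, and Sarmiento \cite{Ceballos2}, so there is no internal proof to compare against. Judged on its own terms, your argument for the first two assertions is correct and essentially complete. The multiset invariant (the entries of ${\bf b}(\mu)$ are exactly the heights of the lattice points of $\mu$) does force injectivity, since a path with the endpoints of $\nu$ is recovered from the multiplicity of each height. The counting bound (at most $f_k+1$ lattice points of height at most $k$, by comparing cumulative east steps with those of $\nu$) makes the insertion procedure well defined and yields conditions (I) and (II), using that heights are weakly increasing along $\mu$, so all points of height less than $k$ are placed, in slots $\leq f_{k-1}$, before any point of height $k$. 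Your argument for (III) is also sound: if $k$ sat at position $i_1$ while some $i_2$ with $i_1<i_2\leq f_k$ later received a larger value, then $i_2$ was vacant and eligible when $i_1$ was filled, contradicting the rightmost-first rule. And your uniqueness lemma is genuine: inducting on $k$, condition (III) forces the value-$k$ entries to occupy exactly the $a_k$ rightmost positions $\leq f_k$ not taken by smaller values, so a $\nu$-bracket vector is determined by its entry multiset; together with your reconstruction of $\mu$ from the multiplicities (where condition (II) gives exactly the dominance inequalities needed for $\mu$ to lie weakly above $\nu$), this proves surjectivity.

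The gap is exactly where you flag it, and it is a genuine one rather than a routine verification. Your reduction of the meet formula is fine as far as it goes: since the componentwise minimum of two $\nu$-bracket vectors is again a $\nu$-bracket vector (the observation the paper records just before the theorem), componentwise minima are meets in $(\Vecc(\nu),\leq)$, so the third assertion would follow once ${\bf b}$ is an isomorphism of posets. But that isomorphism is the actual content of the theorem, and it is nowhere established in your writeup. It is nontrivial precisely because the $\nu$-Tamari order is the rotation order, not the dominance order on paths: as you correctly note, a single rotation replaces one copy of a height $h$ in the entry multiset by a strictly larger value, yet under the rightmost-first rule it can relocate many coordinates at once, so even order-preservation requires real bookkeeping, and order-reflection is needed separately (a bijection can preserve order without reflecting it, and reflection is indispensable for transporting meets). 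Deferring this cover-relation dictionary to \cite{Ceballos2} is honest, and it mirrors what the paper itself does, but it means your attempt is a complete proof only of the first two assertions; the cited source closes the remaining gap with additional machinery ($\nu$-trees and subword complexes) rather than by the direct positional analysis you sketch, so finishing along your lines would still require building that dictionary from scratch.
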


\begin{figure}[ht]
  \begin{center}{\includegraphics[height=14cm]{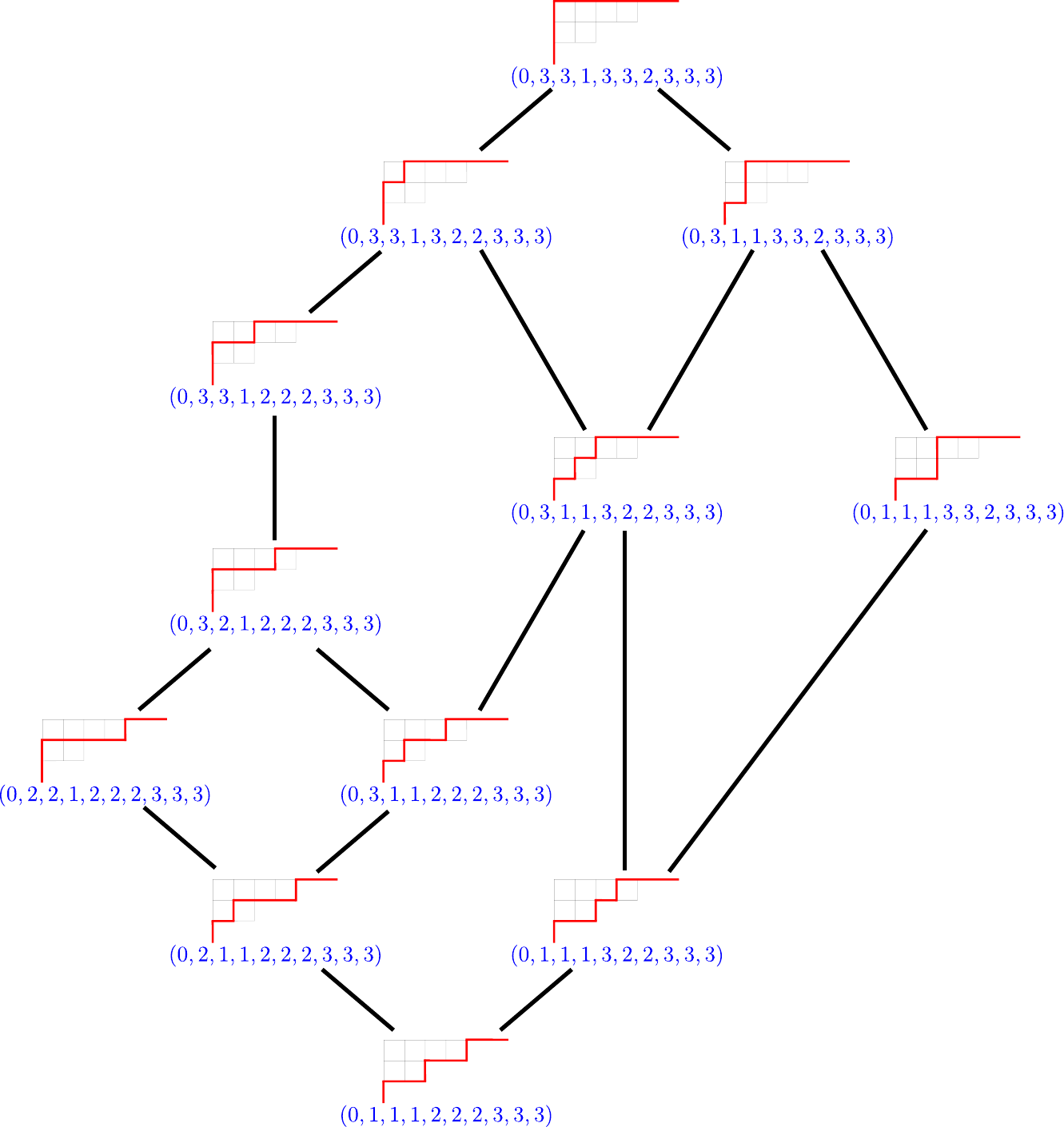}}
  \end{center}
  \caption{The $2$-Tamari lattice $\Tam_3(2)=\Tam(\nu)$, where $\nu=(\text{NE}^2)^3$. Each $2$-ballot path appears above its associated $\nu$-bracket vector.}\label{FigMeet4}
\end{figure}

The previous theorem allows us to transfer the lattice structure of $\Tam(\nu)$ to a lattice structure on $\Vecc(\nu)$, where the meet operation is given by the componentwise minimum. The partial order on $\Vecc(\nu)$ is given by saying $(\mathsf b_0,\ldots,\mathsf b_\ell)\leq(\mathsf b_0',\ldots,\mathsf b_\ell')$ if $\mathsf b_i\leq \mathsf b_i'$ for all $0\leq i\leq \ell$.  

Using the previous theorem, we can give a description of what the map $\Pop$ does in terms of $\nu$-bracket vectors. Let $\mu\in\Tam(\nu)$. Define $\Cov(\mu)$ to be the set of $\nu$-bracket vectors $\vec{\mathsf{b}}$ that are covered by ${\bf b}(\mu)$ in $\Vecc(\nu)$. Let $\Delta(\mu)$ be the set of indices $i\in\{0,\ldots,\ell\}$ such that $b_i(\mu)>b_i(\nu)$ and $b_j(\mu)<b_i(\mu)$ for all $i+1\leq j\leq f_{b_i(\nu)}$. Suppose $i\in\Delta(\mu)$, and let $k=b_i(\nu)$. This means that $f_{k-1}<i\leq f_k$. Because $b_i(\mu)>b_i(\nu)=k$, we must actually have $f_{k-1}<i< f_k$. If we let $k'=b_{i+1}(\mu)$, then $k=b_{i+1}(\nu)\leq k'\leq b_i(\mu)-1$, and it follows from the condition \eqref{Item3} in Definition~\ref{DefMeet2} that $b_j(\mu)\leq k'$ for all $i+1\leq j\leq f_{k'}$. This shows that it makes sense to define $\eta_i(\mu)$ to be the largest element of $\{b_i(\nu),\ldots,b_i(\mu)-1\}$ such that $b_j(\mu)\leq \eta_i(\mu)$ for all $i+1\leq j\leq f_{\eta_i(\mu)}$. In addition, this argument shows that $\eta_i(\mu)\geq b_{i+1}(\mu)$. On the other hand, if $i'\in\{0,\ldots,\ell\}\setminus\Delta(\mu)$, then we simply define $\eta_{i'}(\mu)=b_{i'}(\mu)$. 

\begin{proposition}\label{LemMeet2}
Preserve the notation from above. For $i\in\Delta(\mu)$, let ${\bf b}_\downarrow^i(\mu)$ be the vector obtained from ${\bf b}(\mu)$ by replacing the entry $b_i(\mu)$ in position $i$ with $\eta_i(\mu)$. We have \[\Cov(\mu)=\left\{{\bf b}_\downarrow^i(\mu):i\in\Delta(\mu)\right\}\quad\text{and}\quad{\bf b}\left(\Pop(\mu)\right)=\left(\eta_0(\mu),\ldots,\eta_\ell(\mu)\right).\]
\end{proposition}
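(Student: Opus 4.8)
The plan is to establish the two claimed equalities in sequence, since the second follows readily from the first together with the fact that $\Pop(\mu)$ is the meet of $\{{\bf b}(\mu)\}\cup\Cov(\mu)$ and that meets in $\Vecc(\nu)$ are computed componentwise (Theorem~\ref{ThmBracket}). First I would prove that $\Cov(\mu)=\{{\bf b}_\downarrow^i(\mu):i\in\Delta(\mu)\}$. This requires two inclusions. For the inclusion $\supseteq$, I would fix $i\in\Delta(\mu)$ and verify that ${\bf b}_\downarrow^i(\mu)$ is a genuine $\nu$-bracket vector (checking conditions (I)--(III) of Definition~\ref{DefMeet2}) and that it is covered by ${\bf b}(\mu)$, i.e., that no $\nu$-bracket vector sits strictly between them. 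The conditions (I) and (II) are immediate from the constraint $b_i(\nu)\leq\eta_i(\mu)\leq b_i(\mu)-1$; the $121$-avoidance reformulation of (III) is where the defining maximality property of $\eta_i(\mu)$ is used, namely that $b_j(\mu)\leq\eta_i(\mu)$ for all $i+1\leq j\leq f_{\eta_i(\mu)}$. For the inclusion $\subseteq$, I would take an arbitrary $\nu$-bracket vector $\vec{\mathsf c}\lessdot{\bf b}(\mu)$ and argue that it must differ from ${\bf b}(\mu)$ in exactly one position $i$, that $i$ must lie in $\Delta(\mu)$, and that the value dropped to in that position is forced to be $\eta_i(\mu)$ (any smaller admissible drop would leave room for an intermediate bracket vector, and any larger value would violate (III)).

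The key structural observation driving the $\subseteq$ direction is that a cover relation in the componentwise order on $\Vecc(\nu)$ can only decrease one coordinate, and only by the specific amount allowed by the $121$-avoidance and fixed-position constraints. So I would show: if $\vec{\mathsf c}\leq{\bf b}(\mu)$ with $\vec{\mathsf c}\neq{\bf b}(\mu)$, pick the \emph{largest} index $i$ where they differ; then the fixed-position condition (I) forces $i$ to be a non-fixed position, the inequality $\mathsf c_i<b_i(\mu)$ combined with (II) gives $b_i(\nu)\leq\mathsf c_i$, and the maximality in the definition of $\eta_i(\mu)$ shows that replacing only position $i$ by any value strictly above $\eta_i(\mu)$ fails (III) while the value $\eta_i(\mu)$ itself works. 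This pins down the unique cover below ${\bf b}(\mu)$ obtained by touching position $i$, and iterating over admissible $i$ recovers exactly $\Delta(\mu)$. One should also confirm that every $i\in\Delta(\mu)$ really does correspond to a strict drop (this is exactly the content of the paragraph preceding the proposition, which shows $b_i(\mu)>b_i(\nu)$ and $\eta_i(\mu)\geq b_{i+1}(\mu)$, so the drop is well-defined and nontrivial).

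Once $\Cov(\mu)$ is identified, the formula ${\bf b}(\Pop(\mu))=(\eta_0(\mu),\ldots,\eta_\ell(\mu))$ follows by a short computation. By definition $\Pop(\mu)=\bigwedge(\{\mu\}\cup\{\text{covers below }\mu\})$, and applying ${\bf b}$ together with the componentwise-meet description from Theorem~\ref{ThmBracket} gives ${\bf b}(\Pop(\mu))=\min\left({\bf b}(\mu),\min_{i\in\Delta(\mu)}{\bf b}_\downarrow^i(\mu)\right)$. In each coordinate $j$: if $j\notin\Delta(\mu)$ then every ${\bf b}_\downarrow^i(\mu)$ agrees with ${\bf b}(\mu)$ at position $j$ (each ${\bf b}_\downarrow^i$ alters only position $i$), so the minimum equals $b_j(\mu)=\eta_j(\mu)$; if $j\in\Delta(\mu)$ then ${\bf b}_\downarrow^j(\mu)$ contributes the value $\eta_j(\mu)<b_j(\mu)$ in that coordinate while all other vectors contribute $b_j(\mu)$, so the minimum is again $\eta_j(\mu)$. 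Hence the $j$th coordinate of the meet is $\eta_j(\mu)$ in all cases.

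The main obstacle I anticipate is the $\subseteq$ inclusion for $\Cov(\mu)$: verifying rigorously that an arbitrary cover below ${\bf b}(\mu)$ must be one of the ${\bf b}_\downarrow^i(\mu)$, i.e., that it changes exactly one coordinate and by exactly the amount $\eta_i(\mu)$. The delicate point is ruling out covers that drop a coordinate to an intermediate value strictly between $\eta_i(\mu)$ and $b_i(\mu)$; this is precisely where the interplay between the $121$-avoidance constraint (III) and the maximality built into the definition of $\eta_i(\mu)$ must be deployed carefully, since such an intermediate drop either violates (III) outright or admits a further bracket vector strictly in between, contradicting the covering relation.
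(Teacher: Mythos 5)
Your proposal is correct, and its global architecture (two inclusions for $\Cov(\mu)$, then the componentwise-meet computation via Theorem~\ref{ThmBracket}) agrees with the paper's; but in the crucial $\subseteq$ direction you take a genuinely different route, and it is worth recording that it does close---including the step you flagged as your ``main obstacle.'' The paper takes the \emph{smallest} index $i$ at which a cover $\vec{\mathsf c}$ differs from ${\bf b}(\mu)$, raises that coordinate back to $b_i(\mu)$, checks that the raised vector is still a $\nu$-bracket vector, and uses the sandwich $\vec{\mathsf c}<\vec{\mathsf c}\,'\leq{\bf b}(\mu)$ to force the single-coordinate conclusion; only afterwards does it verify $i\in\Delta(\mu)$ and pin down the dropped value by a comparability argument among $\vec{\mathsf c}$, ${\bf b}_\downarrow^i(\mu)$, and ${\bf b}(\mu)$. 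Your \emph{largest}-index choice is better adapted to condition \eqref{Item3}, because every witness to the maximality of $\eta_i(\mu)$ lies strictly to the right of $i$, where a largest-index cover is untouched. Concretely: if $i$ is the largest index with $\mathsf c_i<b_i(\mu)$ and $v=\mathsf c_i$, then $\mathsf c_j=b_j(\mu)$ for all $j>i$, so condition \eqref{Item3} applied to $\vec{\mathsf c}$ at position $i$ gives $b_j(\mu)\leq v$ for all $i+1\leq j\leq f_v$; combined with $b_i(\nu)\leq v<b_i(\mu)$ from condition (II), this shows at once that $i\in\Delta(\mu)$ (take $k=b_i(\nu)$ and use $f_k\leq f_v$) and that $v\leq\eta_i(\mu)$ by the maximality defining $\eta_i(\mu)$. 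Hence $\vec{\mathsf c}\leq{\bf b}_\downarrow^i(\mu)<{\bf b}(\mu)$, and the covering relation forces $\vec{\mathsf c}={\bf b}_\downarrow^i(\mu)$; the single-coordinate claim, which you proposed to prove first and which the paper proves via its auxiliary raised vector, thus falls out as a corollary rather than needing a separate argument. One caution: your phrase ``replacing only position $i$ by any value strictly above $\eta_i(\mu)$ fails (III)'' is a valid inference about $\vec{\mathsf c}$ itself \emph{only} because of the largest-index choice; for an unspecified (or smallest) differing index, coordinates lowered elsewhere in $\vec{\mathsf c}$ could a priori repair a violation of \eqref{Item3}, and that is precisely the gap the paper's raised-vector device is built to avoid. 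Your $\supseteq$ direction and the passage from $\Cov(\mu)$ to ${\bf b}(\Pop(\mu))=(\eta_0(\mu),\ldots,\eta_\ell(\mu))$ match the paper; on balance, your route buys economy (no auxiliary vector and no ``straightforward to check'' verification), at the cost that the largest-index deduction above must be written out rather than asserted.
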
 

\begin{proof}
Observe that $\left(\eta_0(\mu),\ldots,\eta_\ell(\mu)\right)$ is the componentwise minimum of the vectors ${\bf b}_\downarrow^i(\mu)$ for $i\in\Delta(\mu)$. Therefore, if we can prove that $\Cov(\mu)=\left\{{\bf b}_\downarrow^i(\mu):i\in\Delta(\mu)\right\}$, then it will follow from Theorem~\ref{ThmBracket} that ${\bf b}\left(\Pop(\mu)\right)=\left(\eta_0(\mu),\ldots,\eta_\ell(\mu)\right)$. 

First, choose $i\in\Delta(\mu)$. Because ${\bf b}(\mu)$ is a $\nu$-bracket vector, it follows from the definition of $\eta_i(\mu)$ that ${\bf b}_\downarrow^i(\mu)$ is also a $\nu$-bracket vector. Also, ${\bf b}_\downarrow^i(\mu)<{\bf b}(\mu)$ in the lattice $\Vecc(\nu)$. We want to show that ${\bf b}_\downarrow^i(\mu)\lessdot{\bf b}(\mu)$. Suppose otherwise. Then there exists a $\nu$-bracket vector $\vec{\mathsf{b}}^*=(\mathsf b_0^*,\ldots,\mathsf b_\ell^*)$ with ${\bf b}_\downarrow^i(\mu)<\vec{\mathsf b}^*<{\bf b}(\mu)$. We have $\mathsf b_\gamma^*=b_\gamma(\mu)$ for all $\gamma\in\{0,\ldots,\ell\}\setminus\{i\}$, and $\eta_i(\mu)<\mathsf b_i^*<b_i(\mu)$. By the definition of $\eta_i(\mu)$, there exists an index $j$ such that $b_j(\mu)>\mathsf b_i^*$ and $i+1\leq j\leq f_{\mathsf b_i^*}$. However, this contradicts the fact that $\vec{\mathsf b}^*$ is a $\nu$-bracket vector. Consequently, ${\bf b}_\downarrow^i(\mu)\in\Cov(\mu)$. 

To finish the proof, we need to show that $\Cov(\mu)\subseteq\left\{{\bf b}_\downarrow^i(\mu):i\in\Delta(\mu)\right\}$. Consider a vector $\vec{\mathsf{b}}=(\mathsf b_0,\ldots,\mathsf b_\ell)\in\Cov(\mu)$. We have $\mathsf b_j\leq b_j(\mu)$ for all $j$. Let $i$ be the smallest index such that $\mathsf b_i<b_i(\mu)$. Let $\vec{\mathsf{b}}'$ be the vector obtained from $\vec{\mathsf{b}}$ by replacing the entry $\mathsf b_i$ in position $i$ with $b_i(\mu)$. Because $\vec{\mathsf{b}}$ and ${\bf b}(\mu)$ satisfy the three conditions from Definition~\ref{DefMeet2}, it is straightforward to check that $\vec{\mathsf{b}}'$ also satisfies those conditions (in particular, changing $\vec{\mathsf b}$ into $\vec{\mathsf b}'$ cannot create any $121$ patterns). This means that $\vec{\mathsf{b}}'\in\Vecc(\nu)$. Furthermore, $\vec{\mathsf{b}}<\vec{\mathsf{b}}'\leq{\bf b}(\mu)$, so we must have $\vec{\mathsf{b}}'={\bf b}(\mu)$ since ${\bf b}(\mu)$ covers $\vec{\mathsf{b}}$. Therefore, $\vec{\mathsf b}$ agrees with ${\bf b}(\mu)$ in every position except the $i^\text{th}$ position. Let $k=b_i(\nu)$. We know by Definition~\ref{DefMeet2} that $b_i(\mu)>\mathsf b_i\geq k$ and that $b_j(\mu)=\mathsf b_j\leq \mathsf b_i<b_i(\mu)$ for all $i+1\leq j\leq f_{\mathsf b_i}$. In particular, $b_i(\mu)>k$, and $b_j(\mu)<b_i(\mu)$ for all $i+1\leq j\leq f_k$. This shows that $i\in\Delta(\mu)$. We saw in the previous paragraph that ${\bf b}_\downarrow^i(\mu)$ is a $\nu$-bracket vector covered by ${\bf b}(\mu)$. The vectors $\vec{\mathsf b}$, ${\bf b}(\mu)$, and ${\bf b}_\downarrow^i(\mu)$ agree in all positions except the $i^\text{th}$ position, so they must all be comparable in $\Vecc(\nu)$. Since ${\bf b}(\mu)$ covers both $\vec{\mathsf{b}}$ and ${\bf b}_\downarrow^i(\mu)$, we must have $\vec{\mathsf{b}}={\bf b}_\downarrow^i(\mu)$.   
\end{proof}

\begin{example}
Let $\nu=\text{NEENENEENE}$ and $\mu=\text{NNEENENEEE}$. Then \[{\bf b}(\nu)=(0,1,1,1,2,2,3,3,3,4,4)\quad\text{and}\quad{\bf b}(\mu)=(0,4,2,1,2,2,4,3,3,4,4).\] Note that $b_1(\mu)=4>1=b_1(\nu)$. Furthermore, $f_{b_1(\nu)}=f_1=3$, and $b_j(\mu)<4$ for all $2\leq j\leq 3$. This shows that $1\in\Delta(\mu)$. Now, $\eta_1(\mu)$ is defined to be the largest element of $\{1,2,3\}$ such that $b_j(\mu)\leq \eta_1(\mu)$ for all $2\leq j\leq f_{\eta_1(\mu)}$. Note that $\eta_1(\mu)\neq 3$ since $b_6(\mu)=4>3$ and $2\leq 6\leq 8=f_3$. On the other hand, $f_2=5$, and $b_j(\mu)\leq 2$ for all $2\leq j\leq 5$. This shows that $\eta_1(\mu)=2$, so ${\bf b}_\downarrow^1(\mu)=(0,2,2,1,2,2,4,3,3,4,4)$. Similar analysis shows that $\Delta(\mu)=\{1,2,6\}$, $\eta_2(\mu)=1$, and $\eta_6(\mu)=3$. Also, $\eta_i(\mu)=b_i(\mu)$ for $i\in\{0,\ldots,10\}\setminus\{1,2,6\}$ by definition. Finally, \[{\bf b}\left(\Pop(\mu)\right)=\left(\eta_0(\mu),\ldots,\eta_\ell(\mu)\right)=(0,2,1,1,2,2,3,3,3,4,4),\] so $\Pop(\mu)=\text{NENEENEENE}$. 
\end{example}

\begin{corollary}\label{CorMeet1}
Suppose $\mu\in\Tam(\nu)$, $0\leq k\leq n$, and $f_{k-1}<i<f_k$. Then $b_{i}(\Pop(\mu))\geq b_{i+1}(\mu)$. 
\end{corollary}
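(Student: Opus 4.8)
The plan is to read off $b_i(\Pop(\mu)) = \eta_i(\mu)$ from Proposition~\ref{LemMeet2} and then to split into two cases according to whether $i$ belongs to $\Delta(\mu)$. Before doing so, I would record the elementary fact that the hypothesis $f_{k-1}<i<f_k$ pins down $b_i(\nu)=k$. Indeed, the heights along $\nu$ are weakly increasing and $f_k$ is by definition the largest index at height $k$, so the positions at height exactly $k$ are precisely those in $\{f_{k-1}+1,\ldots,f_k\}$; the given inequalities place $i$ inside this block, and since $i+1\leq f_k$ the index $i+1$ lies in it as well. These two membership facts are all that the two cases will need.

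In the case $i\in\Delta(\mu)$, I would simply invoke the construction of $\eta_i(\mu)$ carried out in the paragraph preceding Proposition~\ref{LemMeet2}. There, with $k=b_i(\nu)$, it was shown that $f_{k-1}<i<f_k$ and that the chosen value satisfies $\eta_i(\mu)\geq b_{i+1}(\mu)$. Since the index $k$ of the corollary coincides with $b_i(\nu)$, this inequality applies verbatim, and combined with $b_i(\Pop(\mu))=\eta_i(\mu)$ it closes the case at once.

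In the case $i\notin\Delta(\mu)$, the definition of $\eta$ gives $\eta_i(\mu)=b_i(\mu)$, so it suffices to prove $b_i(\mu)\geq b_{i+1}(\mu)$. This is immediate from the remark following Definition~\ref{DefMeet2}: for any $\nu$-bracket vector the subsequence $\mathsf b_{f_{k-1}+1},\ldots,\mathsf b_{f_k}$ is weakly decreasing. Applying this to ${\bf b}(\mu)$ and using that both $i$ and $i+1$ lie in $\{f_{k-1}+1,\ldots,f_k\}$, consecutive entries satisfy $b_i(\mu)\geq b_{i+1}(\mu)$, whence $b_i(\Pop(\mu))=\eta_i(\mu)=b_i(\mu)\geq b_{i+1}(\mu)$.

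I do not expect any genuine obstacle: the corollary is a direct bookkeeping consequence of the $\Pop$-formula of Proposition~\ref{LemMeet2} together with the weakly-decreasing structure of bracket vectors within a single block. The only point demanding minor care is the verification that the index $k$ in the corollary is exactly $b_i(\nu)$, so that the inequality quoted from the preamble to Proposition~\ref{LemMeet2} is available unchanged in the case $i\in\Delta(\mu)$.
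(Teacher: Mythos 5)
Your proof is correct and follows essentially the same route as the paper, which simply cites Proposition~\ref{LemMeet2} together with the observation $\eta_i(\mu)\geq b_{i+1}(\mu)$ made when $\eta_i(\mu)$ was defined. Your only addition is to treat the case $i\notin\Delta(\mu)$ explicitly via the weakly decreasing structure of a $\nu$-bracket vector on the block $\{f_{k-1}+1,\ldots,f_k\}$ --- a detail the paper's one-line proof leaves implicit (its quoted observation was literally made only for $i\in\Delta(\mu)$), so your write-up is, if anything, slightly more careful.
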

\begin{proof}
When defining $\eta_i(\mu)$, we observed that $\eta_i(\mu)\geq b_{i+1}(\mu)$. Now apply Proposition~\ref{LemMeet2}. 
\end{proof}

By iterating Corollary~\ref{CorMeet1}, we obtain the following additional corollary. 
\begin{corollary}\label{LemMeet5}
Let $\mu\in\Tam(\nu)$ and $0\leq k\leq n$ be such that $f_k\geq f_{k-1}+2$ and such that $b_j(\mu)=n$ for all $f_{k-1}<j<f_k$. Then $b_{f_{k-1}+1}(\Pop^{f_k-f_{k-1}-2}(\mu))=n$. 
\end{corollary}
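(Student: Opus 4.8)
The plan is to iterate Corollary~\ref{CorMeet1}, tracking the block of positions that carry the maximal value $n$. Write $a=f_{k-1}+1$, so that the open interval of positions strictly between the fixed positions $f_{k-1}$ and $f_k$ is $\{a,a+1,\ldots,f_k-1\}$, which contains exactly $f_k-f_{k-1}-1$ indices; the hypothesis $f_k\geq f_{k-1}+2$ guarantees this interval is nonempty and that the exponent $f_k-f_{k-1}-2$ is a nonnegative integer. By hypothesis $b_j(\mu)=n$ for every $j$ in this interval. Since $\Pop$ maps $\Tam(\nu)$ into itself and the fixed positions $f_0,\ldots,f_n$ depend only on $\nu$ (Definition~\ref{DefMeet2}), Corollary~\ref{CorMeet1} applies verbatim to $\Pop^t(\mu)$ for every $t$: for each $i$ with $f_{k-1}<i<f_k$ we have $b_i(\Pop^{t+1}(\mu))\geq b_{i+1}(\Pop^t(\mu))$.

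First I would prove by induction on $t$ the claim that $b_j(\Pop^t(\mu))=n$ for all $a\leq j\leq f_k-1-t$, valid for $0\leq t\leq f_k-f_{k-1}-2$. The base case $t=0$ is exactly the hypothesis. For the inductive step, fix $i$ with $a\leq i\leq f_k-1-(t+1)$; then $a<i+1\leq f_k-1-t$, so the induction hypothesis gives $b_{i+1}(\Pop^t(\mu))=n$. Applying Corollary~\ref{CorMeet1} to $\Pop^t(\mu)$ yields $b_i(\Pop^{t+1}(\mu))\geq n$, and condition (II) of Definition~\ref{DefMeet2} forces $b_i(\Pop^{t+1}(\mu))\leq n$; hence equality holds. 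Taking $t=f_k-f_{k-1}-2$ and $j=a$ in the claim gives $b_{f_{k-1}+1}(\Pop^{f_k-f_{k-1}-2}(\mu))=n$, which is the desired conclusion.

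The one point requiring care, and the main obstacle, is keeping the indices inside the open interval $(f_{k-1},f_k)$ at every stage so that the hypothesis of Corollary~\ref{CorMeet1} is met. This is why the block of guaranteed $n$'s must shrink from the right: the rightmost position $f_k-1$ loses its guarantee first, because its right neighbor is the fixed position $f_k$, where the entry equals $k$ rather than $n$, so the lower bound the corollary supplies there is only $b_{f_k}(\Pop^t(\mu))=k$. Verifying that the surviving block is precisely $\{a,\ldots,f_k-1-t\}$ after $t$ steps, rather than some larger or misaligned set, is the crux of the bookkeeping, and it is exactly what makes the count $f_k-f_{k-1}-2$ come out correctly.
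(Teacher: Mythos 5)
Your proof is correct and is exactly the paper's approach: the paper's entire proof is the one-line remark that the corollary follows ``by iterating Corollary~\ref{CorMeet1},'' and your induction on $t$, with the block of guaranteed $n$'s shrinking from the right and the upper bound $b_i\leq n$ from condition (II) of Definition~\ref{DefMeet2} forcing equality, is precisely the bookkeeping that iteration requires. The index ranges all check out, including the boundary case $f_k=f_{k-1}+2$, where the claim reduces to the hypothesis itself.
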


\subsection{Forward Orbits of Maximum Size}

We are going to use Proposition~\ref{LemMeet2} to determine the maximum size of a forward orbit of $\Pop_{\Tam(\nu)}$. First, let us establish some lemmas and introduce a little more terminology. 

\begin{lemma}\label{LemMeet3}
Let $\mu\in\Tam(\nu)$, where $\nu$ is as above. We have $i\in\Delta(\mu)$ if and only if $i<\ell$ and $b_i(\mu)>b_{i+1}(\mu)$. If $i\in\Delta(\mu)$ and $i-1$ is not a fixed position of $\nu$, then $i-1\in\Delta(\Pop(\mu))$. 
\end{lemma}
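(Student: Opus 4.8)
The plan is to prove the two assertions of Lemma~\ref{LemMeet3} separately, in both cases working entirely with $\nu$-bracket vectors and exploiting the characterization of $\Delta(\mu)$ together with the formula for $\Pop$ given in Proposition~\ref{LemMeet2}.

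First I would establish the characterization $i\in\Delta(\mu)\iff(i<\ell\text{ and }b_i(\mu)>b_{i+1}(\mu))$. Recall that $\Delta(\mu)$ is the set of indices $i$ such that $b_i(\mu)>b_i(\nu)$ and $b_j(\mu)<b_i(\mu)$ for all $i+1\le j\le f_{b_i(\nu)}$. For the forward direction, if $i\in\Delta(\mu)$ then taking $j=i+1$ (which is permissible provided $i+1\le f_{b_i(\nu)}$, i.e. $i<f_{b_i(\nu)}\le\ell$) gives $b_{i+1}(\mu)<b_i(\mu)$ immediately. The argument in the paragraph preceding Proposition~\ref{LemMeet2} shows that when $i\in\Delta(\mu)$ one has $f_{k-1}<i<f_k$ with $k=b_i(\nu)$, so $i<\ell$ holds automatically. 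For the reverse direction, suppose $i<\ell$ and $b_i(\mu)>b_{i+1}(\mu)$; I would set $k=b_i(\nu)$ and show $b_i(\mu)>k$ and $b_j(\mu)<b_i(\mu)$ for all $i+1\le j\le f_k$. The inequality $b_i(\mu)>k=b_i(\nu)$ should follow because $b_i(\mu)=b_i(\nu)$ would force, via the weak-decreasing property of the block $b_{f_{k-1}+1},\dots,b_{f_k}$ and condition (I), that $i=f_k$ and hence $b_{i+1}(\mu)\ge b_i(\mu)$, contradicting the hypothesis; and the condition on $j$ in the range $i+1\le j\le f_k$ follows from the weak-decreasing property of that block of ${\bf b}(\mu)$ (the $121$-avoidance of condition~\eqref{Item3}), since all such $b_j(\mu)$ are $\le b_{i+1}(\mu)<b_i(\mu)$.

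Next I would prove the second assertion: if $i\in\Delta(\mu)$ and $i-1$ is not a fixed position of $\nu$, then $i-1\in\Delta(\Pop(\mu))$. By the characterization just proved, it suffices to show $i-1<\ell$ and $b_{i-1}(\Pop(\mu))>b_i(\Pop(\mu))$. The bound $i-1<\ell$ is clear. The key observation is that since $i-1$ is not a fixed position, we have $\eta_{i-1}(\mu)$ and the block containing position $i-1$ behave controllably under $\Pop$. I would compute $b_i(\Pop(\mu))=\eta_i(\mu)$ and $b_{i-1}(\Pop(\mu))=\eta_{i-1}(\mu)$ using Proposition~\ref{LemMeet2}, and then compare. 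The clean way is to note that $b_i(\Pop(\mu))=\eta_i(\mu)\le b_i(\mu)-1<b_i(\mu)$ (since $i\in\Delta(\mu)$ forces the replacement), while Corollary~\ref{CorMeet1} applied at index $i-1$ (valid because $i-1$ is strictly between consecutive fixed positions, as $i-1$ is not fixed and lies in the same $k$-block or the adjacent one) gives $b_{i-1}(\Pop(\mu))\ge b_i(\mu)$. Combining, $b_{i-1}(\Pop(\mu))\ge b_i(\mu)>\eta_i(\mu)=b_i(\Pop(\mu))$, which is exactly the inequality needed.

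\textbf{Main obstacle.} The delicate point I expect is verifying that Corollary~\ref{CorMeet1} applies at index $i-1$, i.e. confirming that $i-1$ is strictly between two consecutive fixed positions $f_{k'-1}<i-1<f_{k'}$ for the appropriate $k'$. This requires carefully tracking which block $i-1$ falls into relative to the block of $i$, using both the hypothesis that $i-1$ is not a fixed position and the fact (from the discussion before Proposition~\ref{LemMeet2}) that $i\in\Delta(\mu)$ forces $f_{k-1}<i<f_k$ where $k=b_i(\nu)$. If $i-1>f_{k-1}$ then $i-1$ lies in the same $k$-block and $f_{k-1}<i-1<f_k$ holds directly; the only subtlety is the boundary case $i-1=f_{k-1}$, which is excluded precisely by the hypothesis that $i-1$ is not a fixed position. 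Ensuring this case analysis is airtight, and that $\eta_i(\mu)<b_i(\mu)$ is genuinely strict so the final inequality is strict, is where the care is needed; the rest reduces to the characterization from the first part and a direct application of Corollary~\ref{CorMeet1}.
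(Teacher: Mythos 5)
Your proposal is correct and takes essentially the same approach as the paper: the equivalence $i\in\Delta(\mu)\iff b_i(\mu)>b_{i+1}(\mu)$ is derived from the weak decrease of bracket-vector entries within each block between consecutive fixed positions (together with the boundary behavior $b_{f_k}(\mu)=k<b_{f_k+1}(\mu)$), and the second assertion combines the inequality $b_{i-1}(\Pop(\mu))=\eta_{i-1}(\mu)\geq b_i(\mu)$ — which the paper obtains by the argument proving Corollary~\ref{CorMeet1}, exactly your application of that corollary at index $i-1$ — with the strict drop $b_i(\Pop(\mu))<b_i(\mu)$ from Proposition~\ref{LemMeet2}. The boundary subtlety you flag is resolved just as you say: $i\in\Delta(\mu)$ forces $f_{k-1}<i<f_k$, so the hypothesis $i-1\neq f_{k-1}$ yields $f_{k-1}<i-1<f_k$, which is precisely what the corollary requires.
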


\begin{proof}
Suppose $f_{k-1}<i\leq f_k$. It follows from Definition~\ref{DefMeet2} that the numbers $b_j(\mu)$ for $f_{k-1}< j\leq f_k$ are weakly decreasing and that $b_{f_k}(\mu)=k=b_{f_k}(\nu)$. Furthermore, if $k<n$, then $k<b_{f_k+1}(\mu)$. This shows that $i\in\Delta(\mu)$ if and only if $b_i(\mu)>b_{i+1}(\mu)$. Assume $i-1$ is not a fixed position (i.e., $i-1\neq f_{k-1}$). Then $\eta_{i-1}(\mu)\geq b_i(\mu)$ (by the same argument used to prove Corollary~\ref{CorMeet1}). By Proposition~\ref{LemMeet2}, we have $b_{i-1}(\Pop(\mu))=\eta_{i-1}(\mu)$ and $b_i(\mu)>b_i(\Pop(\mu))$. This shows that $b_{i-1}(\Pop(\mu))>b_i(\Pop(\mu))$, so $i-1\in\Delta(\Pop(\mu))$.  
\end{proof}

Let $\vec{\mathsf{b}}=(\mathsf b_0,\ldots, \mathsf b_\ell)$ be a $\nu$-bracket vector. Let $\beta$ be an index such that $f_{k-1}<\beta<f_k$, where $0\leq k\leq n$ (with $f_{-1}=-1$). We say $\beta$ is \dfn{unimpeded} with respect to $\vec{\mathsf{b}}$ if there exists an index $\gamma$ with $\beta\leq\gamma\leq f_k$ such that $\mathsf b_\beta>\mathsf b_{\beta+1}>\cdots>\mathsf b_\gamma=k$. In particular, if $\mathsf b_{\beta+1}=k$, then $\beta$ is unimpeded with respect to $\vec{\mathsf b}$ because we can either take $\gamma=\beta$ or $\gamma=\beta+1$. Our terminology is motivated by the following lemma, which tells us that the entry in position $\beta$ drops when we apply $\Pop$ so long as $\beta$ is unimpeded; in other words, the entry is not impeded from going down. 

\begin{lemma}\label{LemUnimpeded}
Let $\mu\in\Tam(\nu)$. Let $\beta$ and $k$ be such that $0\leq k\leq n$ and $f_{k-1}<\beta<f_k$, and suppose $\beta$ is unimpeded with respect to ${\bf b}(\mu)$. Then $\beta$ is unimpeded with respect to ${\bf b}(\Pop(\mu))$. If $b_\beta(\mu)>k$, then $b_\beta(\Pop(\mu))<b_\beta(\mu)$. If $\beta>f_{k-1}+1$, then $\beta-1$ is also unimpeded with respect to ${\bf b}(\Pop(\mu))$. 
\end{lemma}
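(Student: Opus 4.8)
The plan is to work entirely on the bracket-vector side. Write ${\bf b}(\mu)=(b_0,\ldots,b_\ell)$ and, by Proposition~\ref{LemMeet2}, ${\bf b}(\Pop(\mu))=(\eta_0,\ldots,\eta_\ell)$ with $\eta_i=\eta_i(\mu)$. Fix the index $\gamma$ witnessing that $\beta$ is unimpeded, so that $b_\beta>b_{\beta+1}>\cdots>b_\gamma=k$ with $\beta\le\gamma\le f_k$. The backbone of the whole argument is the observation that each position $i$ with $\beta\le i\le\gamma-1$ lies strictly between $f_{k-1}$ and $f_k$ and satisfies $b_i>b_{i+1}$, so Lemma~\ref{LemMeet3} places it in $\Delta(\mu)$; hence $b_{i+1}\le\eta_i<b_i$ for each such $i$. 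The bottom position $\gamma$ is either the fixed position $f_k$ or satisfies $b_\gamma=k=b_\gamma(\nu)$, and in both cases $\gamma\notin\Delta(\mu)$, so $\eta_\gamma=k$. The middle statement is then immediate: if $b_\beta>k$ then $\gamma>\beta$, so $b_\beta>b_{\beta+1}$ and $\beta\in\Delta(\mu)$, whence $\eta_\beta<b_\beta$, i.e.\ $b_\beta(\Pop(\mu))<b_\beta(\mu)$.

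For the first statement I would verify that the transformed values $\eta_\beta,\ldots,\eta_\gamma$ again strictly decrease to $k$ (this is trivial when $\gamma=\beta$, where $\eta_\beta=k$ directly). When $\gamma>\beta$, the crux is a telescoping inequality: for $\beta\le i\le\gamma-2$ both $i$ and $i+1$ lie in $\Delta(\mu)$, so $\eta_i\ge b_{i+1}>\eta_{i+1}$, giving strict descent; and at the end $\eta_{\gamma-1}\ge b_\gamma=k=\eta_\gamma$. Letting $\gamma'$ be the first index in $\{\beta,\ldots,\gamma\}$ at which the value $k$ is attained, the chain $\eta_\beta>\cdots>\eta_{\gamma'}=k$ witnesses that $\beta$ is unimpeded with respect to ${\bf b}(\Pop(\mu))$.

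For the third statement, assume $\beta>f_{k-1}+1$, so that $\beta-1$ is a non-fixed position in the same block and is a legitimate candidate. If $b_\beta>k$, then $\beta\in\Delta(\mu)$, and since $\beta-1$ is not fixed, the second assertion of Lemma~\ref{LemMeet3} gives $\beta-1\in\Delta(\Pop(\mu))$, i.e.\ $\eta_{\beta-1}>\eta_\beta$; prepending $\beta-1$ extends the run produced for the first statement. If instead $b_\beta=k$, then $\eta_\beta=k$ while $\eta_{\beta-1}\ge k$ (because $b_{\beta-1}\ge b_\beta=k$ by the weak monotonicity within the block), and according to whether $\eta_{\beta-1}$ equals or exceeds $k$ one takes the witness $\gamma''=\beta-1$ or $\gamma''=\beta$.

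The main obstacle is the strict-descent claim in the first statement: a priori the entries of ${\bf b}(\Pop(\mu))$ could collapse and destroy the strictly decreasing run. This is resolved exactly by the telescoping inequality $\eta_i\ge b_{i+1}>\eta_{i+1}$, which couples the lower bound $\eta_i\ge b_{i+1}$ (valid because $i\in\Delta(\mu)$) with the strict upper bound $\eta_{i+1}<b_{i+1}$ (valid because $i+1\in\Delta(\mu)$). Everything therefore reduces to confirming that the relevant positions lie in $\Delta(\mu)$, which is precisely where the unimpeded hypothesis and Lemma~\ref{LemMeet3} do their work.
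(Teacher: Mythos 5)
Your proposal is correct and takes essentially the same route as the paper: the paper proves the lemma by induction on $\gamma-\beta$, and its inductive step --- $\beta\in\Delta(\mu)$ via Lemma~\ref{LemMeet3}, the bound $\eta_\beta(\mu)\geq b_{\beta+1}(\mu)$, and $b_{\beta+1}(\Pop(\mu))<b_{\beta+1}(\mu)$ from Proposition~\ref{LemMeet2} --- is precisely your telescoping inequality $\eta_i\geq b_{i+1}>\eta_{i+1}$ with the induction unrolled into an explicit chain. One cosmetic point: in the case $b_\beta(\mu)=k$ of the third statement, the bound $\eta_{\beta-1}(\mu)\geq k$ follows from the definition of $\eta_{\beta-1}(\mu)$ (it lies in $\{b_{\beta-1}(\nu),\ldots\}$, and $b_{\beta-1}(\nu)=k$), i.e.\ from ${\bf b}(\Pop(\mu))$ being a $\nu$-bracket vector, rather than from the weak monotonicity of ${\bf b}(\mu)$ as you state, since $\eta_{\beta-1}(\mu)$ may be strictly smaller than $b_{\beta-1}(\mu)$.
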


\begin{proof}
Let $\gamma$ be the index such that $\beta\leq\gamma\leq f_k$ and $b_\beta(\mu)>b_{\beta+1}(\mu)>\cdots>b_\gamma(\mu)=k$. We proceed by induction on $\gamma-\beta$. First, suppose $\gamma-\beta=0$. It follows from the definition of a $\nu$-bracket vector and Proposition~\ref{LemMeet2} that $k\leq b_\beta(\Pop(\mu))\leq b_\beta(\mu)=k$, so $b_\beta(\Pop(\mu))=k$. This implies that $\beta$ is unimpeded with respect to ${\bf b}(\Pop(\mu))$. If $\beta>f_{k-1}+1$, then, since $b_{\beta}(\Pop(\mu))=k$, the index $\beta-1$ is also unimpeded with respect to ${\bf b}(\Pop(\mu))$. 

Now assume $\gamma-\beta\geq 1$. Then $b_\beta(\mu)>b_{\beta+1}(\mu)\geq k$, so Lemma~\ref{LemMeet3} tells us that $\beta\in\Delta(\mu)$. According to Proposition~\ref{LemMeet2}, we have $b_\beta(\Pop(\mu))=\eta_\beta(\mu)<b_\beta(\mu)$. Furthermore, the argument used to prove Corollary~\ref{CorMeet1} yields the inequality $\eta_\beta(\mu)\geq b_{\beta+1}(\mu)$. By induction, we either have $b_{\beta+1}(\mu)=k$ or $b_{\beta+1}(\Pop(\mu))<b_{\beta+1}(\mu)$. In the first case, $b_{\beta+1}(\Pop(\mu))=k$, so $\beta$ is unimpeded with respect to ${\bf b}(\Pop(\mu))$. In the second case, $b_{\beta+1}(\Pop(\mu))<b_{\beta}(\Pop(\mu))$, so we can use the induction hypothesis that $\beta+1$ is unimpeded with respect to ${\bf b}(\Pop(\mu))$ to see that $\beta$ is unimpeded with respect to ${\bf b}(\Pop(\mu))$. Finally, suppose $\beta>f_{k-1}+1$. Then Lemma~\ref{LemMeet3} tells us that $\beta-1\in\Delta(\Pop(\mu))$ and $b_{\beta-1}(\Pop(\mu))>b_\beta(\Pop(\mu))$. Since $\beta$ is unimpeded with respect to ${\bf b}(\Pop(\mu))$, so is $\beta-1$. 
\end{proof}

\begin{theorem}\label{ThmMeet2}
Let $\nu$ be a lattice path that starts at $(0,0)$, ends at $(\ell-n,n)$, and has fixed positions $f_0,\ldots,f_n$. Let $K(\nu)$ be the set of indices $k\in\{0,\ldots,n-1\}$ such that $f_k-f_{k-1}\geq 2$. If $K(\nu)=\emptyset$, let $\Theta(\nu)=1$; otherwise, let $\Theta(\nu)=\max\limits_{k\in K(\nu)}(f_k-f_{k-1}-k+n-1)$. Then \[\max_{\mu\in\Tam(\nu)}\left|O_{\Pop_{\Tam(\nu)}}(\mu)\right|=\Theta(\nu).\] 
\end{theorem}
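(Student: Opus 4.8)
The plan is to prove the two inequalities separately, relying on the fact that iterating $\Pop$ on the finite lattice $\Tam(\nu)$ strictly decreases the bracket vector (in the componentwise order) until it reaches the unique fixed point ${\bf b}(\nu)$. Indeed, by Lemma~\ref{LemMeet3} we have $\Delta(\mu)=\emptyset$ exactly when ${\bf b}(\mu)$ is weakly increasing, and the only weakly increasing $\nu$-bracket vector is ${\bf b}(\nu)$ itself; hence $\Pop(\mu)<\mu$ whenever $\mu\neq\nu$, so the iterates of $\mu$ are pairwise distinct and $\left|O_{\Pop}(\mu)\right|=1+\min\{t:\Pop^t(\mu)=\nu\}$. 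The case $K(\nu)=\emptyset$ is degenerate: then no block contains a non-fixed position, so $\Tam(\nu)=\{\nu\}$ and both sides equal $1$. I would assume $K(\nu)\neq\emptyset$ hereafter.

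For the upper bound I would show that every entry reaches its target value $b_i(\nu)$ within $\Theta(\nu)-1$ steps. The positions at height $n$ and the fixed positions always sit at their target, and a block $k\notin K(\nu)$ with $k\leq n-1$ contains no non-fixed position; so it suffices to control, for each $k\in K(\nu)$, the non-fixed positions $f_{k-1}+1,\dots,f_k-1$ of block $k$. The key input is Lemma~\ref{LemUnimpeded}. Since $b_{f_k}(\mu)=k$, the position $f_k-1$ is unimpeded with respect to ${\bf b}(\mu)$; by the leftward-propagation statement in Lemma~\ref{LemUnimpeded}, the position $f_k-1-j$ is unimpeded after $j$ applications of $\Pop$, so the leftmost position $f_{k-1}+1$ is unimpeded after $f_k-f_{k-1}-2$ steps. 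Once a position is unimpeded and still exceeds $k$, Lemma~\ref{LemUnimpeded} forces its entry to strictly decrease; as entries lie in $[k,n]$, the entry reaches $k$ within $n-k$ further steps and then stays there. Combining the two phases, block $k$ is entirely at its target after $(f_k-f_{k-1}-2)+(n-k)=f_k-f_{k-1}-k+n-2$ steps; taking the maximum over $k\in K(\nu)$ gives $\Pop^{\Theta(\nu)-1}(\mu)=\nu$, hence $\left|O_{\Pop}(\mu)\right|\leq\Theta(\nu)$.

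For the lower bound I would fix $k^\ast\in K(\nu)$ attaining the maximum defining $\Theta(\nu)$ and exhibit a single path realizing the bound. Let $\mu$ be the path whose bracket vector equals $n$ on every non-fixed position of block $k^\ast$ and equals $b_i(\nu)$ elsewhere; a short check of the three conditions of Definition~\ref{DefMeet2} (the only possible $121$ patterns would have middle entry $n$, and no value below $n$ repeats on both sides of block $k^\ast$) shows this is a valid $\nu$-bracket vector. Writing $p_1<\cdots<p_r$ for the non-fixed positions of block $k^\ast$ (so $r=f_{k^\ast}-f_{k^\ast-1}-1$), every position to the right of $p_r$ sits at its floor value $b_j(\nu)$ and therefore remains frozen there under all iterates of $\Pop$. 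This lets me show that $p_r$ descends by at most $1$ per step: taking $w=b_{p_r}-1$ in the definition of $\eta_{p_r}$ works because $b_j(\nu)\leq w$ precisely when $j\leq f_w$, so $b_{p_r}(\Pop^s(\mu))\geq n-s$. Feeding this into the chain $b_{p_s}(\Pop^{t}(\mu))\geq b_{p_{s+1}}(\Pop^{t-1}(\mu))$ supplied by Corollary~\ref{CorMeet1} (the $p_s$ are consecutive), I obtain $b_{p_1}(\Pop^{t}(\mu))\geq b_{p_r}(\Pop^{t-r+1}(\mu))\geq n-(t-r+1)$; at $t=\Theta(\nu)-2$ the right-hand side simplifies to $k^\ast+1>k^\ast$, so $\Pop^{\Theta(\nu)-2}(\mu)\neq\nu$ and $\left|O_{\Pop}(\mu)\right|\geq\Theta(\nu)$.

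The main obstacle is the bookkeeping in the lower bound: I must isolate the single position (the leftmost non-fixed position $p_1$ of the optimal block $k^\ast$) whose entry is simultaneously forced to stay maximal for the first $f_{k^\ast}-f_{k^\ast-1}-2$ steps and then to descend slowly, and I must verify that the chosen configuration keeps all entries to the right of block $k^\ast$ frozen at their floor. The leftward propagation delay of $r-1$ steps (which can alternatively be read off from Corollary~\ref{LemMeet5}) and the per-step decrease of at most $1$ thereafter must combine to the exact count $\Theta(\nu)-1$; the delicate point is that Corollary~\ref{CorMeet1} yields only $b_{p_1}(\Pop^{t})\geq b_{p_r}(\Pop^{t-r+1})$, so the slow-descent estimate on the rightmost position $p_r$ must carry the argument once transported leftward. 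Both ingredients are immediate consequences of the already-established Lemma~\ref{LemUnimpeded} and Corollary~\ref{CorMeet1}, so I expect no genuinely new difficulty beyond this careful accounting.
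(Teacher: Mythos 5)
Your proposal is correct and follows essentially the same route as the paper: the upper bound via leftward propagation of unimpeded indices (Lemma~\ref{LemUnimpeded}) followed by strict descent to the floor value, and the lower bound via the identical extremal path whose bracket vector equals $n$ on the interior of the optimal block. The only cosmetic difference is that where the paper writes down an explicit formula for ${\bf b}(\Pop^t(\mu^*))$, you extract just the needed one-sided estimate $b_{p_1}\bigl(\Pop^{\Theta(\nu)-2}(\mu)\bigr)\geq k^\ast+1$ from the frozen-tail observation, the descend-by-at-most-one bound at position $f_{k^\ast}-1$, and a chain of applications of Corollary~\ref{CorMeet1}, which if anything makes the verification more self-contained.
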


\begin{proof}
If $K(\nu)=\emptyset$, then $\nu=\text{N}^n\text{E}^{\ell-n}$, so $\Tam(\nu)=\{\nu\}$. In this case, the only forward orbit of $\Pop$ has size $\Theta(\nu)=1$. Hence, in what follows, we may assume $K(\nu)\neq\emptyset$. 

Choose $\mu\in\Tam(\nu)$ and $0\leq k\leq n$. If $k\not\in K(\nu)$ and $f_{k-1}<j\leq f_k$, then either $j=f_k$ or $k=n$, so every $\nu$-bracket vector has the entry $b_j(\nu)$ in its $j^\text{th}$ position. On the other hand, we claim that if $k\in K(\nu)$, then $b_{f_{k-1}+1}(\Pop^{f_k-f_{k-1}-k+n-2}(\mu))=k$. If we can prove this claim, then it will follow from Definition~\ref{DefMeet2} that $b_j(\Pop^{f_k-f_{k-1}-k+n-2}(\mu))=k=b_j(\nu)$ for every $k\in K(\nu)$ and every $f_{k-1}<j\leq f_k$. It will then follow that $\Pop^{\Theta(\nu)-1}(\mu)=\nu$. As $\mu$ was arbitrary and $\Pop(\nu)=\nu$, this will prove that every element of $\Tam(\nu)$ has a forward orbit under $\Pop$ of size at most $\Theta(\nu)$. 

Now fix $k\in K(\nu)$. Since $b_{f_k}(\mu)=k$, the index $f_k-1$ is unimpeded with respect to ${\bf b}(\mu)$. If $f_k-2>f_{k-1}$, then it follows from Lemma~\ref{LemUnimpeded} that $f_k-2$ is unimpeded with respect to ${\bf b}(\Pop(\mu))$. If $f_k-3>f_{k-1}$, then we can use Lemma~\ref{LemUnimpeded} again to see that $f_k-3$ is unimpeded with respect to ${\bf b}(\Pop^2(\mu))$. Repeating this argument, we eventually find that $f_{k-1}+1$ is unimpeded with respect to ${\bf b}(\Pop^{f_k-f_{k-1}-2}(\mu))$. Lemma~\ref{LemUnimpeded} now tells us that $f_{k-1}+1$ is unimpeded with respect to ${\bf b}(\Pop^t(\mu))$ for every $t\geq f_k-f_{k-1}-2$. The same lemma tells us that if $t\geq f_k-f_{k-1}-2$, then $b_{f_{k-1}+1}(\Pop^{t+1}(\mu))$ is either equal to $k$ or is strictly less than $b_{f_{k-1}+1}(\Pop^t(\mu))$. Since $b_{f_{k-1}+1}(\Pop^{f_k-f_{k-1}-2}(\mu))\leq n$, we have $b_{f_{k-1}+1}(\Pop^{f_k-f_{k-1}-2+n-k}(\mu))=k$, as desired. 

We now want to show that the upper bound we have just proven is tight. As before, we may assume $K(\nu)\neq\emptyset$. Fix $k\in\{0,\ldots,n\}$ such that $f_k-f_{k-1}-k+n-1=\Theta(\nu)$. Let $\vec{\mathsf b}^*=(\mathsf b_0^*,\ldots,\mathsf b_\ell^*)$ be such that $\mathsf b_j^*=n$ for all $j\in\{f_{k-1}+1,\ldots,f_k-1\}$ and $\mathsf b_j^*=b_j(\nu)$ for all $j\in\{0,\ldots,\ell\}\setminus\{f_{k-1}+1,\ldots,f_k-1\}$. Then $\vec{\mathsf{b}}^*$ is a $\nu$-bracket vector, so we know by Theorem~\ref{ThmBracket} that there exists $\mu^*\in\Tam(\nu)$ such that ${\bf b}(\mu^*)=\vec{\mathsf{b}}^*$. By repeatedly applying Proposition~\ref{LemMeet2}, one can straightforwardly describe ${\bf b}(\Pop^t(\mu^*))$ explicitly for every $t\geq 0$ (see Example~\ref{ExamMeet1}). First, we have $b_j(\Pop^t(\mu^*))=b_j(\nu)$ for every $j\in\{0,\ldots,\ell\}\setminus\{f_{k-1}+1,\ldots,f_k-1\}$. Second, if $j\in\{f_{k-1}+1,\ldots,f_k-1\}$, then 
\[b_j(\Pop^t(\mu^*))=\begin{cases} \max\{k,n-(t+1-(f_k-j))\}, & \mbox{if } t\geq f_k-j-1; \\ n, & \mbox{if } t\leq f_k-j-2. \end{cases}\] In particular, suppose we set $t=\Theta(\nu)-2$ and $j=f_{k-1}+1$. Since $k\in K(\nu)$, we have $k\leq n-1$, so $t=f_k-f_{k-1}-k+n-3\geq f_k-f_{k-1}-(n-1)+n-3=f_k-j-1$. Hence,
\[b_{f_{k-1}+1}(\Pop^{\Theta(\nu)-2}(\mu^*))=b_{f_{k-1}+1}(\Pop^{f_k-f_{k-1}-k+n-3}(\mu))\] \[=\max\{k,n-(f_k-f_{k-1}-k+n-2-(f_k-f_{k-1}-1))\}=\max\{k,k+1\}=k+1>k=b_j(\nu).\] This shows that $\Pop^{\Theta(\nu)-2}(\mu^*)\neq\nu$, so $\left|O_{\Pop}(\mu^*)\right|\geq\Theta(\nu)$.  
\end{proof}

\begin{example}\label{ExamMeet1}
Let us illustrate the formula for ${\bf b}(\Pop^t(\mu^*))$ given in the second half of the proof of Theorem~\ref{ThmMeet2}. Suppose $\nu=\text{NEEEENEN}$. Then $n=3$, $\ell=8$, and ${\bf b}(\nu)=(0,1,1,1,1,1,2,2,3)$. Then $K(\nu)=\{1,2\}$, and the maximum value of $f_k-f_{k-1}-k+n-1$ for $k\in K(\nu)$ is $\Theta(\nu)=6$, which is attained when $k=1$. Now, $\mu^*$ is the lattice path such that ${\bf b}(\mu)=\vec{\mathsf{b}}^*=(0,3,3,3,3,1,2,2,3)$ (specifically, $\mu=\text{NNENEEEE}$). The sequence of $\nu$-bracket vectors ${\bf b}(\Pop^t(\mu^*))$ for $t\geq 0$ is \[(0,3,3,3,3,1,2,2,3),\quad (0,3,3,3,2,1,2,2,3),\quad (0,3,3,2,1,1,2,2,3),\quad (0,3,2,1,1,1,2,2,3),\] \[ (0,2,1,1,1,1,2,2,3),\quad (0,1,1,1,1,1,2,2,3),\quad (0,1,1,1,1,1,2,2,3),\quad\ldots.\qedhere\] 
\end{example}

Recall that the $m$-Tamari lattice $\Tam_n(m)$ is the $\nu$-Tamari lattice associated to the lattice path $\nu=(\text{NE}^m)^n$. When we work with the $m$-Tamari lattice $\Tam_n(m)$, we preserve the definitions from above with the tacit understanding that $\nu=(\text{NE}^m)^n$. In this case, $f_k=k(m+1)$ for all $0\leq k\leq n$. For notational convenience, we will write $J_k=\{(k-1)(m+1)+1,\ldots,k(m+1)-1\}$ (suppressing the dependence on $m$). In Theorem~\ref{ThmMeet3}, we will give an explicit description of the elements of $\Tam_n(m)$ whose forward orbits under $\Pop$ are of the maximum size. In Theorem~\ref{ThmMeet4}, we will give a simple formula for the number of such lattice paths. First, we require a technical lemma. 

\begin{lemma}\label{LemMeet4}
Let $m,n\geq 1$, and let $\mu\in\Tam_n(m)$. Fix $1\leq k\leq n$. We have $b_j(\Pop^t(\mu))\leq\max\{k,n+m-1-t\}$ whenever  $j\in J_k$ and $t\geq m-1$. If $j\in J_k\setminus\{(k-1)(m+1)+1\}$ and $t\geq m-1$, then $b_j(\Pop^t(\mu))\leq\max\{k,n+m-2-t\}$. If there exists an index $r\in J_k$ such that $b_r(\mu)<n$, then $b_j(\Pop^t(\mu))\leq\max\{k,n+m-2-t\}$ whenever $j\in J_k$ and $t\geq m-1$. 
\end{lemma}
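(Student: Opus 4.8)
The plan is to track how quickly the entries in the block $J_k$ descend to their floor value $k$ under iteration of $\Pop$, using the unimpeded-index machinery of Lemma~\ref{LemUnimpeded} together with the fact that $\Pop$ never increases an entry. The latter is immediate from Proposition~\ref{LemMeet2}, since $b_i(\Pop(\mu))=\eta_i(\mu)\leq b_i(\mu)$ for every $i$, whence ${\bf b}(\Pop^{t+1}(\mu))\leq{\bf b}(\Pop^t(\mu))$ componentwise for all $t$. Throughout I write $\beta_0=(k-1)(m+1)+1=f_k-m$ for the leftmost index of $J_k$ and note that $J_k=\{f_k-m,\ldots,f_k-1\}$.

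First I would establish a descent schedule for the interior positions. Since $b_{f_k}(\mu)=k$, the index $f_k-1$ is unimpeded with respect to ${\bf b}(\mu)$ (take $\gamma=f_k$ when $b_{f_k-1}(\mu)>k$ and $\gamma=f_k-1$ otherwise; these cases are exhaustive because the block is weakly decreasing). Feeding this into the leftward-propagation clause of Lemma~\ref{LemUnimpeded} and inducting on $s$, the index $f_k-s$ becomes unimpeded with respect to ${\bf b}(\Pop^{s-1}(\mu))$ for each $1\leq s\leq m$; the hypothesis $f_k-(s-1)>f_{k-1}+1$ required at each step is precisely $s\leq m$. Once an index is unimpeded it remains unimpeded and its entry strictly decreases at each subsequent step until it reaches $k$, again by Lemma~\ref{LemUnimpeded}. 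Hence, for $t\geq s-1$, a straightforward monotone-descent estimate gives $b_{f_k-s}(\Pop^t(\mu))\leq\max\{k,\,n-t+s-1\}$, using that the entry is at most $n$ at time $s-1$. Taking $s=m$ yields the first statement, and restricting to $s\leq m-1$ (that is, to $j\neq\beta_0$) yields the second.

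For the third statement the only position not already covered by the second is the leftmost one, $\beta_0$, so I would prove the sharper bound $b_{\beta_0}(\Pop^{m-1}(\mu))\leq n-1$ and then rerun the descent estimate from time $m-1$ with this improved initial value. If $b_{\beta_0}(\mu)\leq n-1$, monotonicity gives this at once. Otherwise $b_{\beta_0}(\mu)=n$, and because the entries of ${\bf b}(\mu)$ are weakly decreasing across $J_k$, the hypothesis forces a leading run of $n$'s occupying positions $\beta_0,\ldots,r_0-1$, where $r_0\in J_k$ is least with $b_{r_0}(\mu)<n$; this run has length $d=r_0-\beta_0\leq m-1$. The key claim is that the leading run of $n$'s shrinks by at least one at each application of $\Pop$: for a length-$c$ leading run, its right end $\beta_0+c-1$ satisfies $b_{\beta_0+c-1}=n>b_{\beta_0+c}$, so it lies in $\Delta$ by Lemma~\ref{LemMeet3} and its entry drops below $n$, while monotonicity prevents the run from lengthening on the left. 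After at most $d\leq m-1$ steps the run is exhausted, giving $b_{\beta_0}(\Pop^{m-1}(\mu))\leq n-1$.

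I expect this last claim to be the main obstacle, since it is where the hypothesis on $r$ genuinely enters and where I must reason about a whole run of entries rather than a single one; the care lies in verifying that $\Delta$-membership of the run's right end together with componentwise monotonicity force strict shrinkage with no regrowth on the left. With $b_{\beta_0}(\Pop^{m-1}(\mu))\leq n-1$ established, the descent estimate from time $m-1$ yields $b_{\beta_0}(\Pop^t(\mu))\leq\max\{k,\,n+m-2-t\}$ for $t\geq m-1$, and combining this with the second statement completes the third.
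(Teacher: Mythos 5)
Your proof is correct, and for the first two statements it is the same as the paper's: both arguments start from the observation that $f_k-1$ is unimpeded with respect to ${\bf b}(\mu)$, propagate unimpededness leftward across $J_k$ one position per application of $\Pop$ via Lemma~\ref{LemUnimpeded} (so that all of $J_k$ is unimpeded by time $m-1$, and $J_k\setminus\{(k-1)(m+1)+1\}$ already by time $m-2$), and then apply the strict-descent clause of that lemma together with the trivial bound $n$ on the starting value. For the third statement both proofs reduce to the same intermediate claim, namely $b_{(k-1)(m+1)+1}(\Pop^{m-1}(\mu))\leq n-1$, but reach it by different mechanisms: the paper locates a descent $b_i(\mu)>b_{i+1}(\mu)$ inside the block and invokes the second clause of Lemma~\ref{LemMeet3}, which propagates $\Delta$-membership leftward one position per step, producing $q\leq m-2$ with $(k-1)(m+1)+1\in\Delta(\Pop^q(\mu))$ so that Proposition~\ref{LemMeet2} forces the strict drop; you instead track the leading run of $n$'s shrinking from the right, using only the descent characterization (the first clause of Lemma~\ref{LemMeet3}), the componentwise monotonicity $b_i(\Pop(\mu))=\eta_i(\mu)\leq b_i(\mu)$, and the weak decrease of bracket vectors within a block. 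These are dual views of the same dynamics and take the same number of steps (your run length $d$ equals the paper's $q+1$ when $i$ is taken to be the first descent); your version avoids the propagation clause of Lemma~\ref{LemMeet3} entirely, at the cost of verifying the run structure by hand, which you do correctly --- in particular, your check that the run's right end $\beta_0+c-1\leq f_k-2$ stays strictly inside the block is exactly what keeps Lemma~\ref{LemMeet3} and Proposition~\ref{LemMeet2} applicable throughout, and monotonicity rules out any regrowth of the run.
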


\begin{proof}
We have $f_{k}=k(m+1)$ for all $0\leq k\leq n$. Let $\mathcal N=b_{(k-1)(m+1)+1}(\mu)$. Then $b_j(\mu)\leq \mathcal N$ for all $j\in J_k$. The index $f_k-1$ is unimpeded with respect to ${\bf b}(\mu)$, so we can use Lemma~\ref{LemUnimpeded} to see that $f_k-2$ and $f_k-1$ are both unimpeded with respect to ${\bf b}(\Pop(\mu))$. Repeating this argument shows that all of the indices in $J_k$ are unimpeded with respect to ${\bf b}(\Pop^{m-1}(\mu))$. Applying Lemma~\ref{LemUnimpeded} repeatedly again, we find that all of the indices in $J_k$ are unimpeded with respect to ${\bf b}(\Pop^t(\mu))$ for all $t\geq m-1$. In addition, we find that $b_j(\Pop^t(\mu))\leq\max\{k,\mathcal N+m-1-t\}$ for all $j\in J_k$ and $t\geq m-1$. This proves the first desired statement because $\mathcal N\leq n$. The second desired statement follows from a similar line of reasoning because all of the elements of $J_k\setminus\{(k-1)(m+1)+1\}$ are unimpeded with respect to ${\bf b}(\Pop^{m-2}(\mu))$. 

Now suppose $r\in J_k$ is such that $b_r(\mu)<n$. If $\mathcal N\leq n-1$, then the same argument we used in the previous paragraph proves the third desired statement. Therefore, we may assume $\mathcal N=n$. This implies that there exists an index $i$ such that $(k-1)(m+1)+1\leq i\leq k(m+1)-2$ and $b_i(\mu)>b_{i+1}(\mu)$. By Lemma~\ref{LemMeet3}, $i\in\Delta(\mu)$. Applying Lemma~\ref{LemMeet3} repeatedly, we find that there is an integer $q$ with $0\leq q\leq m-2$ such that $(k-1)(m+1)+1\in\Delta(\Pop^q(\mu))$. Then $b_{(k-1)(m+1)+1}(\Pop^{m-1}(\mu))\leq b_{(k-1)(m+1)+1}(\Pop^{q+1}(\mu))<b_{(k-1)(m+1)+1}(\Pop^{q}(\mu))\leq n$, where the strict inequality comes from Proposition~\ref{LemMeet2}. It follows that $b_j(\Pop^{m-1}(\mu))\leq n-1$ for all $j\in J_k$. We saw above that all of the indices in $J_k$ are unimpeded with respect to ${\bf b}(\Pop^t(\mu))$ for all $t\geq m-1$. Repeated applications of Lemma~\ref{LemUnimpeded} yield the desired fact that $b_j(\Pop^t(\mu))\leq\max\{k,n+m-2-t\}$ for all $j\in J_k$ and $t\geq m-1$.   
\end{proof}

\begin{theorem}\label{ThmMeet3}
Let $m\geq 1$ and $n\geq 2$. We have \[\max_{\mu\in\Tam_n(m)}\left|O_{\Pop_{\Tam_n(m)}}(\mu)\right|=m+n-1.\] A lattice path $\mu\in\Tam_n(m)$ satisfies $\left|O_{\Pop_{\Tam_n(m)}}(\mu)\right|=m+n-1$ if and only if $b_m(\mu)=n$ and $b_{k(m+1)-1}(\mu)<n$ for all $2\leq k\leq n-1$.
\end{theorem}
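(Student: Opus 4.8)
The plan is to reduce the whole statement to tracking a single coordinate, the entry in position $1$, as we iterate $\Pop$. Specializing Theorem~\ref{ThmMeet2} to $\nu=(\text{NE}^m)^n$, where $f_k=k(m+1)$, gives $K(\nu)=\{1,\dots,n-1\}$ and $\Theta(\nu)=\max_{1\le k\le n-1}(m+n-k)=m+n-1$, attained at $k=1$; this proves the displayed maximum and, crucially, guarantees $\Pop^{m+n-2}(\mu)=\nu$ for every $\mu$. Hence $\left|O_{\Pop}(\mu)\right|=m+n-1$ if and only if $\Pop^{m+n-3}(\mu)\neq\nu$. Setting $t=m+n-3\ge m-1$ in Lemma~\ref{LemMeet4}, its first part yields $b_j(\Pop^{m+n-3}(\mu))\le\max\{k,2\}$ for $j\in J_k$, which for $k\ge 2$ forces equality with $b_j(\nu)=k$, while its second part forces $b_j(\Pop^{m+n-3}(\mu))=1$ for $j\in J_1\setminus\{1\}$; the fixed positions always agree with $\nu$. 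Therefore $\Pop^{m+n-3}(\mu)$ can differ from $\nu$ only in position $1$, where the value lies in $\{1,2\}$, so the entire task becomes to determine exactly when $b_1(\Pop^{m+n-3}(\mu))=2$.

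For sufficiency, I would assume $b_m(\mu)=n$ (so $J_1$ is full) and $b_{k(m+1)-1}(\mu)<n$ for $2\le k\le n-1$ (so none of $J_2,\dots,J_{n-1}$ is full), and prove by induction on $s$ that $b_1(\Pop^{m-1+s}(\mu))=n-s$ for $0\le s\le n-2$, which gives $b_1(\Pop^{m+n-3}(\mu))=2$. The base case $s=0$ is exactly Corollary~\ref{LemMeet5} with $k=1$. For the inductive step, at time $t=m-1+s$ position $1$ equals $n-s>1$, and I would compute $\eta_1$ via Proposition~\ref{LemMeet2}: applying Lemma~\ref{LemMeet4} at time $t$ (its first two parts to $J_1$, and its third part to the \emph{non-full} blocks $J_2,\dots,J_{n-s-1}$, which are among $J_2,\dots,J_{n-1}$) shows every entry in positions $2,\dots,f_{n-s-1}$ is at most $n-s-1$. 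Thus $u=n-s-1$ is admissible and is the largest admissible value below $n-s$, so $\eta_1=n-s-1$ and position $1$ drops by exactly one. Along the way one verifies $b_1>b_2$, so $1\in\Delta(\Pop^t(\mu))$ by Lemma~\ref{LemMeet3} and the drop genuinely occurs.

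For necessity, I would assume $b_1(\Pop^{m+n-3}(\mu))=2$ and derive both conditions. If $b_m(\mu)<n$, then Lemma~\ref{LemMeet4}'s third part with $k=1$ and $t=m+n-3$ would give $b_1(\Pop^{m+n-3}(\mu))\le\max\{1,1\}=1$, a contradiction; hence $b_m(\mu)=n$ and $J_1$ is full. Suppose next, for contradiction, that some $J_k$ with $2\le k\le n-1$ is full. By Corollary~\ref{LemMeet5} (applied with $k=1$ and with the given $k$) we have $b_1(\Pop^{m-1}(\mu))=n$ and $b_{(k-1)(m+1)+1}(\Pop^{m-1}(\mu))=n$; one checks $1\in\Delta(\Pop^{m-1}(\mu))$, and since $(k-1)(m+1)+1\le f_u$ precisely when $u\ge k$, the condition defining $\eta_1$ excludes every $u\in\{k,\dots,n-1\}$, so $b_1(\Pop^{m}(\mu))=\eta_1\le k-1$. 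As position $1$ is unimpeded for all $t\ge m-1$, Lemma~\ref{LemUnimpeded} forces it to strictly decrease until it reaches its floor $1$, which therefore happens by time $m+(k-2)\le m+n-3$; this gives $b_1(\Pop^{m+n-3}(\mu))=1$, a contradiction. Hence no $J_k$ with $2\le k\le n-1$ is full, and the characterization follows.

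I expect the necessity argument in the full-block case to be the main obstacle. It is the only place where the non-monotone behavior of $\Pop$ is essential (a componentwise \emph{larger} starting vector can collapse position $1$ \emph{faster}), and making the ``skipping'' precise requires combining three ingredients at once: the persistence of a full block through time $m-1$ (Corollary~\ref{LemMeet5}), the exact admissibility condition defining $\eta_1$ in Proposition~\ref{LemMeet2}, and the unimpeded strict-decrease guarantee (Lemma~\ref{LemUnimpeded}). The sufficiency induction, by contrast, should be routine bookkeeping once the three parts of Lemma~\ref{LemMeet4} are invoked at the shifting times $t=m-1+s$.
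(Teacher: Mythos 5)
Your proposal is correct and follows essentially the same route as the paper: the maximum via Theorem~\ref{ThmMeet2}, the reduction via Lemma~\ref{LemMeet4} to the single question of whether $b_1(\Pop^{m+n-3}(\mu))$ equals $2$ or $1$, and the same sufficiency induction computing $\eta_1$ at times $t=m-1+s$ using Corollary~\ref{LemMeet5} and Proposition~\ref{LemMeet2}. The only organizational difference is in necessity, where the paper forces the exact trajectory $b_1(\Pop^t(\mu))=\max\{1,n+m-1-t\}$ and then applies the contrapositive of Corollary~\ref{LemMeet5} block by block, while you run the same mechanism contrapositively (a full block $J_k$ persists to time $m-1$, makes $\eta_1\leq k-1$, and collapses position $1$ to its floor by time $m+k-2\leq m+n-3$); this uses identical lemmas and the same underlying computation.
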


\begin{proof}
Let $\nu=(\text{NE}^m)^n$. We have $f_k=k(m+1)$ for all $0\leq k\leq n$, so $K(\nu)=\{1,\ldots,n-1\}$. Hence, $\Theta(\nu)=\max\limits_{1\leq k\leq n-1}(m+1-k+n-1)=m+n-1$. The first assertion in the theorem now follows from Theorem~\ref{ThmMeet2}. 

Let $\mu\in\Tam_n(m)$ be such that $b_m(\mu)=n$ and $b_{k(m+1)-1}(\mu)<n$ for all $2\leq k\leq n-1$. According to Lemma~\ref{LemMeet4}, $b_j(\Pop^t(\mu))\leq\max\{k,n+m-2-t\}$ whenever $2\leq k\leq n-1$, $j\in J_k$, and $t\geq m-1$. The same lemma tells us that $b_j(\Pop^t(\mu))\leq\max\{1,n+m-2-t\}$ whenever $2\leq j\leq m$ and $t\geq m-1$. Also, $b_j(\Pop^t(\mu))=b_j(\nu)$ for all $(n-1)(m+1)\leq j\leq n(m+1)$ and $t\geq 0$ since ${\bf b}(\Pop^t(\mu))$ is a $\nu$-bracket vector. In summary, $b_j(\Pop^t(\mu))\leq\max\{b_j(\nu),n+m-2-t\}$ for all $2\leq j\leq (m+1)n$ and $t\geq m-1$. If we set $k=1$ in Corollary~\ref{LemMeet5}, then we find that $b_1(\Pop^{m-1}(\mu))=n$. Because $b_j(\Pop^{m-1}(\mu))\leq\max\{b_j(\nu),n-1\}$ for all $2\leq j\leq (m+1)n$, we have $\eta_1(\Pop^{m-1}(\mu))=\max\{1,n-1\}$. By Proposition~\ref{LemMeet2}, 
$b_1(\Pop^m(\mu))=\max\{1,n-1\}$. Because $b_j(\Pop^m(\mu))\leq\max\{b_j(\nu),n-2\}$ for all $2\leq j\leq (m+1)n$, we have $\eta_1(\Pop^m(\mu))=\max\{1,n-2\}$. By Proposition~\ref{LemMeet2}, 
$b_1(\Pop^m(\mu))=\max\{1,n-2\}$. Repeating this argument, we find that $b_1(\Pop^t(\mu))=\max\{1,n+m-1-t\}$ for all $t\geq m-1$. In particular, $b_1(\Pop^{m+n-3}(\mu))=2>1=b_1(\nu)$, so $\Pop^{m+n-3}(\mu)\neq\nu$. We deduce that $\left|O_{\Pop}(\mu)\right|\geq m+n-1$. Therefore, $\left|O_{\Pop}(\mu)\right|=m+n-1$ by the first statement of the theorem.  

Now assume $\mu\in\Tam_n(m)$ is such that $\left|O_{\Pop}(\mu)\right|=m+n-1$; we will show that ${\bf b}(\mu)$ is of the specified form. Invoking Lemma~\ref{LemMeet4}, we find that $b_j(\Pop^{m+n-3}(\mu))=b_j(\nu)$ for all $2\leq j\leq(m+1)n$. Furthermore, $b_0(\Pop^{m+n-3}(\mu))=0=b_0(\nu)$. We have $\Pop^{m+n-3}(\mu)\neq\nu$, so $b_1(\Pop^{m+n-3}(\mu))>b_1(\nu)=1$. Lemma~\ref{LemMeet4} tells us that if there is an index $r\in J_1$ such that $b_r(\mu)<n$, then $b_1(\Pop^{m+n-3}(\mu))\leq 1$, which is impossible. Hence, $b_m(\mu)=n$. It follows from Lemma~\ref{LemUnimpeded} that the indices in $J_1$ are all unimpeded with respect to ${\bf b}(\Pop^t(\mu))$ for all $t\geq m-1$. The same lemma tells us that for $t\geq m-1$, the values of $b_1(\Pop^t(\mu))$ strictly decrease as $t$ increases until they stabilize at $1$. Since $b_1(\Pop^{m+n-3}(\mu))\geq 2$ and $b_1(\Pop^{m-1}(\mu))\leq n$, the only way this can happen is if $b_1(\Pop^t(\mu))=\max\{1,n+m-1-t\}$ for all $t\geq m-1$. In particular, $b_1(\Pop^{m-1}(\mu))=n$ and $b_1(\Pop^m(\mu))=n-1$. Proposition~\ref{LemMeet2} tells us that $1\in\Delta(\Pop^{m-1}(\mu))$ and $\eta_1(\Pop^{m-1}(\mu))=n-1$. This means that $b_j(\Pop^{m-1}(\mu))\leq n-1$ for all $2\leq j\leq f_{n-1}$. Now choose an integer $k$ such that $2\leq k\leq n-1$. We have $2\leq f_{k-1}+1\leq f_{n-1}$, so $b_{f_{k-1}+1}(\Pop^{m-1}(\mu))\leq n-1$. According to Corollary~\ref{LemMeet5}, there must be an index $j\in J_k$ such that $b_j(\mu)<n$. Consequently, $b_{k(m+1)-1}(\mu)<n$. 
\end{proof}

Our next goal is to prove that the number of lattice paths $\mu\in\Tam_n(m)$ such that $\left|O_{\Pop}(\mu)\right|=m+n-1$ is $\frac{1}{n-1}\binom{(m+1)(n-2)+m-1}{n-2}$. First, we need to consider a different family of lattice paths counted by the same numbers. Let us say an $m$-ballot path $\mu\in\Tam_n(m)$ is \dfn{primitive} if it does not touch the line $y=x/m$ except at its endpoints $(0,0)$ and $(mn,n)$. Let $\Tam_n'(m)$ be the set of primitive elements of $\Tam_n(m)$. Every $m$-ballot path can be decomposed uniquely as a concatenation of primitive $m$-ballot paths, so we have $A_m(z)=\dfrac{B_m(z)}{1-B_m(z)}$, where \[A_m(z)=\sum_{n\geq 1}|\!\Tam_n(m)|z^n\quad\text{and}\quad B_m(z)=\sum_{n\geq 1}|\!\Tam_n'(m)|z^n.\] It is well known that $|\!\Tam_n(m)|=\frac{1}{mn+1}\binom{(m+1)n}{n}$ and that $A_m(z)=z(1+A_m(z))^{m+1}$. Once we replace $A_m(z)$ with $\dfrac{B_m(z)}{1-B_m(z)}$ and simplify, this equation becomes $B_m(z)=\dfrac{z}{(1-B_m(z))^m}$. A straightforward application of the Lagrange inversion formula \cite[Theorem~5.4.2]{Stanley2} then yields 
\begin{equation}\label{EqMeet3}
|\!\Tam_n'(m)|=\frac{1}{n}\binom{(m+1)(n-1)+m-1}{n-1}.
\end{equation}

In order to connect primitive $m$-ballot paths with $m$-ballot paths whose forward orbits under $\Pop$ are of maximum size, we will employ \dfn{generating trees}, which were introduced in \cite{Chung} and have been used frequently afterward \cite{Banderier, West3, West2}. To describe a generating tree for a class of combinatorial objects, we first label the objects and specify a scheme by which each object of size $n$ can be uniquely generated from an object of size $n-1$. The generating tree consists of an \dfn{axiom} that specifies the labels of the objects of size $1$ along with a \dfn{rule} that describes the labels of the objects generated by each object with a given label. For example, in the generating tree 
\[\text{Axiom: }(2)\qquad\text{Rule: }(1)\leadsto(2),\quad(2)\leadsto(1)(2),\] the axiom $(2)$ tells us that we begin with a single object of size $1$ that has label $2$. The rule $(1)\leadsto(2),\hspace{.15cm}(2)\leadsto(1)(2)$ tells us that each object of size $n-1$ with label $1$ generates a single object of size $n$ with label $2$, whereas each object of size $n-1$ with label $2$ generates one object of size $n$ with label $1$ and one object of size $n$ with label $2$. This classical example of a generating tree describes objects counted by the Fibonacci numbers \cite[Example~3]{West3}. 

Fix a positive integer $m$. We want to provide a generating tree for the class of primitive $m$-ballot paths. Instead, we will provide a generating tree for a class of words that are in bijection with primitive $m$-ballot paths. Suppose $\mu\in\Tam_n(m)$. Observe that $b_0(\mu)=0$. It is straightforward to check that $\mu$ is primitive if and only if $b_1(\mu)=n$. Let ${\bf u}(\mu)$ be the word $u_2(\mu)\cdots u_{(m+1)n}(\mu)$ over the alphabet $[n]$, where $u_i(\mu)=n+1-b_{i}(\mu)$. For example, if $m=2$, $n=3$, and ${\bf b}(\mu)=(0,3,2,1,2,2,2,3,3,3)$, then ${\bf u}(\mu)=23222111$. Let $\mathscr U_n(m)=\{{\bf u}(\mu):\mu\in\Tam_n'(m)\}$. The map ${\bf u}$ is a bijection from $\Tam_n'(m)$ to $\mathscr U_n(m)$. By Definition~\ref{DefMeet2}, a word $\mathsf u=\mathsf u_2\cdots \mathsf u_{(m+1)n}$ over the alphabet $[n]$ is in $\mathscr U_n(m)$ if and only if it satisfies the following conditions: 
\begin{itemize}
\item $\mathsf u_{k(m+1)}=n+1-k$ for all $1\leq k\leq n$;
\item $1\leq \mathsf u_i\leq n+1-k$ whenever $1\leq k\leq n$ and $(k-1)(m+1)+1\leq i\leq k(m+1)$;
\item if $\mathsf u_i=n+1-k$, then $\mathsf u_j\geq n+1-k$ for all $i+1\leq j\leq k(m+1)$.
\end{itemize}
Observe that the third condition could equivalently be replaced by the condition that $\mathsf u$ avoids the pattern $212$; this means that there do not exist indices $i_1<i_2<i_3$ such that $\mathsf u_{i_1}=\mathsf u_{i_3}>\mathsf u_{i_2}$. 

For $\mathsf u=\mathsf u_2\cdots \mathsf u_{(m+1)n}\in\mathscr U_n(m)$, let us say a letter $j$ is \dfn{blocked} in $\mathsf u$ if there exists a letter $i<j$ that appears to the left of an occurrence of $j$ in $\mathsf u$. Define the \dfn{label} of $\mathsf u$ to be the number of elements of $[n+1]$ that are not blocked in $\mathsf u$. 

The advantage of the class of words $\bigcup_{n\geq 1}\mathscr U_n(m)$ comes from the observation that if $n\geq 2$ and $\mathsf{u}\in\mathscr U_n(m)$, then the word $\mathsf{u'}$ obtained by deleting the prefix of $\mathsf{u}$ of length $m+1$ is in $\mathscr U_{n-1}(m)$. We say the word $\mathsf{u'}$ \dfn{generates} the word $\mathsf{u}$. For example, the word $14133222111\in\mathscr U_4(2)$ is generated by the word $33222111\in\mathscr U_3(2)$. Since $\mathsf u$ avoids the pattern $212$, there cannot exist a letter that appears in the prefix of $\mathsf u$ of length $m+1$ and that is also blocked in $\mathsf u'$. 

Consider $\mathsf v'\in\mathscr U_{n-1}(m)$, and let $a_1<\cdots<a_r$ be the elements of $[n]$ that are not blocked in $\mathsf v'$. In particular, $a_1=1$ (since $1$ cannot be blocked) and $a_r=n$ (since $n$ does not occur in $\mathsf v'$). The words in $\mathscr U_n(m)$ generated by $\mathsf v'$ are precisely the words of the form $\mathsf{w}na_\ell\mathsf v'$, where $1\leq \ell\leq r-1$ and $\mathsf{w}$ is a nondecreasing word of length $m-1$ over the alphabet $\{a_1,a_2,\ldots,a_\ell,n\}$. Let $\mathsf v$ be such a word generated by $\mathsf v'$. If the first letter in $\mathsf v$ is $n$, then $\mathsf w=nn\cdots n$, and the elements of $[n+1]$ that are not blocked in $\mathsf v$ are $a_1,\ldots,a_\ell,n,n+1$. In this case, the label of $\mathsf v$ is $\ell+2$. On the other hand, if the first letter in $\mathsf w$ is $a_j$ for some $1\leq j\leq \ell$, then the elements of $[n+1]$ that are not blocked in $\mathsf v$ are $a_1,\ldots,a_j,n+1$, so the label of $\mathsf v$ is $j+1$. The number of nondecreasing words of length $m-1$ over the alphabet $\{a_1,a_2,\ldots,a_\ell,n\}$ that start with $a_j$ (i.e., the number of choices for $\mathsf w$ once we have chosen $\mathsf v'$, $\ell$, and $j$) is $\binom{\ell-j+m-1}{m-2}$. (Indeed, this is the same as the number of multisets of cardinality $m-2$ with elements taken from the set $\{a_j,\ldots, a_\ell,n\}$.) 

Suppose $t\geq 2$. The previous paragraph shows that if $\mathsf v'\in\mathscr U_{n-1}(m)$ has label $r$, then the number of words $\mathsf v\in\mathscr U_n(m)$ that are generated by $\mathsf v'$ and have label $t$ is \[\sum_{\ell=t-1}^{r-1}\binom{\ell-t+m}{m-2}+\sum_{\ell=1}^{r-1}\delta_{\ell+2,t},\] where we are using the Kronecker $\delta$. By the hockey-stick identity, we have \[\sum_{\ell=t-1}^{r-1}\binom{\ell-t+m}{m-2}+\sum_{\ell=1}^{r-1}\delta_{\ell+2,t}=\binom{r+m-t}{m-1}-\delta_{t,2}.\] It follows that a generating tree describing the class $\bigcup_{n\geq 1}\mathscr U_n(m)$ is \[\text{Axiom: }(2)\qquad\text{Rule: }(r)\leadsto(2)^{\binom{r+m-2}{m-1}-1}(3)^{\binom{r+m-3}{m-1}}(4)^{\binom{r+m-4}{m-1}}\cdots(r+1)^{\binom{m-1}{m-1}}\text{ for all }r\geq 2,\] where we write $(t)^\alpha$ to denote $\alpha$ copies of $(t)$. This is also a generating tree for $\bigcup_{n\geq 1}\Tam_n'(m)$, the class of (nonempty) primitive $m$-ballot paths. 

\begin{theorem}\label{ThmMeet4}
For $n\geq 2$, let $\mathscr M_n(m)$ be the set of $m$-ballot paths $\mu\in\Tam_n(m)$ such that $\left|O_{\Pop_{\Tam_n(m)}}(\mu)\right|=m+n-1$. Then \[|\mathscr M_n(m)|=\frac{1}{n-1}\binom{(m+1)(n-2)+m-1}{n-2}.\] 
\end{theorem}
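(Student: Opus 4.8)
The plan is to show that $|\mathscr M_n(m)|$ equals $|\Tam_{n-1}'(m)|$, since replacing $n$ by $n-1$ in \eqref{EqMeet3} turns its right-hand side into exactly the claimed quantity $\frac{1}{n-1}\binom{(m+1)(n-2)+m-1}{n-2}$. Thus it suffices to produce a generating tree for the class $\bigcup_{n\geq 2}\mathscr M_n(m)$ that coincides, as an abstract generating tree, with the generating tree already established for the primitive $m$-ballot paths $\bigcup_{n\geq 1}\Tam_n'(m)$, but with its level indices shifted up by one, so that $\mathscr M_n(m)$ occupies the level held by $\Tam_{n-1}'(m)$. Because that primitive generating tree has level-$(n-1)$ count equal to $|\Tam_{n-1}'(m)|$, matching the two trees immediately yields the theorem.

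To build the generating tree for $\mathscr M_n(m)$, I would first translate the characterization of Theorem~\ref{ThmMeet3} into the language of $\nu$-bracket vectors for $\nu=(\text{NE}^m)^n$: a path $\mu$ lies in $\mathscr M_n(m)$ exactly when the block $J_1$ of ${\bf b}(\mu)$ consists entirely of $n$'s, the last block $J_n$ consists entirely of $n$'s (which is automatic), and each intermediate block $J_2,\ldots,J_{n-1}$ contains an entry strictly less than $n$. The key structural observation is that deleting the first non-rigid block $J_2$ together with the fixed position $f_2$, and performing the induced downward relabeling of heights, carries $\mathscr M_n(m)$ into $\mathscr M_{n-1}(m)$: the rigid all-$n$ block $J_1$ remains the rigid top block, the intermediate ``not all $n$'' blocks are preserved, and the forced top block at the end is preserved. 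This makes the class self-similar, so that inserting a new block in the $J_2$ slot realizes the reverse (generating) operation. I would then encode each $\mu$ by the word read off from the entries of ${\bf b}(\mu)$ lying strictly to the right of the first fixed position $f_1$, define the label of $\mu$ through a blocked-letter statistic mirroring the one used for $\mathscr U_n(m)$, and read off the succession rule.

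The computation of the succession rule is where the real work lies, and I expect the main obstacle to be reconciling two features that superficially disagree. On the one hand, the generation step here inserts a block constrained to be \emph{not} all equal to $n$, whereas the primitive generation prepends an unconstrained block; on the other hand, the block deleted for $\mathscr M_n(m)$ has length $m$, while the primitive prefix $\mathsf{w}\,n\,a_\ell$ decomposes as a length-$(m-1)$ partial block followed by a fixed letter and one further letter. These mismatches force a one-position offset in the word encoding together with a correspondingly adjusted label, and the heart of the proof is to verify that, after these adjustments, a parent of $\mathscr M$-label $r$ produces exactly $\binom{r+m-t}{m-1}-\delta_{t,2}$ children of $\mathscr M$-label $t$ — the same quantity obtained for $\mathscr U_n(m)$ — with the unique element of $\mathscr M_2(m)$ receiving label $2$ so that it plays the role of the axiom one level down. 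Once the rule and the axiom are shown to match, the equality $|\mathscr M_n(m)|=|\Tam_{n-1}'(m)|$ follows, completing the proof; the absence of a level-preserving map between the two trees explains why the connection to primitive paths remains combinatorially opaque.
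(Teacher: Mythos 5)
Your proposal is correct and follows essentially the same route as the paper: there, too, a path $\mu\in\mathscr M_{n+1}(m)$ is encoded by the word of (complemented) bracket-vector entries strictly to the right of $f_1$, the label is defined by the same blocked-letter statistic, generation prepends a length-$(m+1)$ prefix (equivalently, deletes $J_2\cup\{f_2\}$ with relabeling), and the succession rule is verified to be $(r)\leadsto(2)^{\binom{r+m-2}{m-1}-1}(3)^{\binom{r+m-3}{m-1}}\cdots(r+1)^{\binom{m-1}{m-1}}$ with axiom $(2)$, matching the primitive tree one level down so that $|\mathscr M_{n+1}(m)|=|\!\Tam_n'(m)|$. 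The one step you defer is exactly the computation the paper performs: the inserted prefix is $\mathsf{w}n$ with $\mathsf w$ a nondecreasing length-$m$ word over the unblocked letters not ending in $1$, counted by first letter to give $\binom{r+m-t}{m-1}-\delta_{t,2}$ children of label $t$.
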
 

\begin{proof}
Suppose $\mu\in\mathscr M_{n+1}(m)$. We must have $b_0(\mu)=0$ and $b_{m+1}(\mu)=1$. It follows from Theorem~\ref{ThmMeet3} that $b_j(\mu)=n+1$ for all $1\leq j\leq m$ and $b_{k(m+1)-1}(\mu)<n+1$ for all $2\leq k\leq n$. Let ${\bf y}(\mu)$ be the word $y_1(\mu)\cdots y_{(m+1)n}(\mu)$ over the alphabet $[n]$, where $y_i(\mu)=n+2-b_{m+1+i}(\mu)$. For example, if $m=n=2$ and ${\bf b}(\mu)=(0,3,3,1,3,2,2,3,3,3)$, then ${\bf y}(\mu)=122111$. Let $\mathscr Y_n(m)=\{{\bf y}(\mu):\mu\in\mathscr M_{n+1}(m)\}$. The map ${\bf y}$ is a bijection from $\mathscr M_{n+1}(m)$ to $\mathscr Y_n(m)$. By Definition~\ref{DefMeet2} and Theorem~\ref{ThmMeet3}, a word $\mathsf y=\mathsf y_1\cdots \mathsf y_{(m+1)n}$ over the alphabet $[n]$ is in $\mathscr Y_n(m)$ if and only if it satisfies the following conditions: 
\begin{itemize}
\item $\mathsf y_{k(m+1)}=n+1-k$ for all $1\leq k\leq n$; 
\item $1\leq \mathsf y_i\leq n+1-k$ whenever $1\leq k\leq n$ and $(k-1)(m+1)+1\leq i\leq k(m+1)$;
\item if $\mathsf y_i=n+1-k$, then $\mathsf y_j\geq n+1-k$ for all $i\leq j\leq k(m+1)$; 
\item $\mathsf y_{(k-1)(m+1)-1}(\mu)>1$ for all $2\leq k\leq n$.
\end{itemize}
Observe that the third condition could equivalently be replaced by the condition that $\mathsf y$ avoids the pattern $212$. 

For $\mathsf y=\mathsf y_1\cdots \mathsf y_{(m+1)n}\in\mathscr Y_n(m)$, let us say a letter $j$ is \dfn{blocked} in $\mathsf y$ if there exists a letter $i<j$ that appears to the left of an occurrence of $j$ in $\mathsf y$. Define the \dfn{label} of $\mathsf y$ to be the number of elements of $[n+1]$ that are not blocked in $\mathsf y$. 

If $n\geq 2$ and $\mathsf y\in\mathscr Y_n(m)$, then the word $\mathsf y'$ obtained by deleting the prefix of $\mathsf{y}$ of length $m+1$ is in $\mathscr Y_{n-1}(m)$. We say the word $\mathsf{y'}$ \dfn{generates} the word $\mathsf{y}$. For example, the word $344333122111\in\mathscr Y_4(2)$ is generated by the word $333122111\in\mathscr Y_3(2)$. Observe that since $\mathsf y$ avoids the pattern $212$, there cannot exist a letter that appears in the prefix of $\mathsf y$ of length $m+1$ and that is also blocked in $\mathsf y'$. 

Consider $\mathsf z'\in\mathscr Y_{n-1}(m)$, and let $a_1<\cdots<a_r$ be the elements of $[n]$ that are not blocked in $\mathsf z'$. In particular, $a_1=1$ (since $1$ cannot be blocked) and $a_r=n$ (since $n$ does not occur in $\mathsf z'$). The words generated by $\mathsf z'$ are precisely the words of the form $\mathsf{w}n\mathsf v'$, where $\mathsf{w}$ is a nondecreasing word of length $m$ over the alphabet $\{a_1,a_2,\ldots,a_r\}$ that does not end with the letter $1$. Let $\mathsf z$ be such a word generated by $\mathsf z'$, and let $a_j$ be the first letter of $\mathsf z$. The letters in $[n+1]$ that are not blocked in $\mathsf z$ are $a_1,\ldots,a_j,n+1$, so the label of $\mathsf z$ is $j+1$. If $a_j\neq 1$ (equivalently, $j\neq 1$), then the number of nondecreasing words of length $m$ over the alphabet $\{a_1,\ldots,a_r\}$ that start with $a_j$ and do not end in $1$ (i.e., the number of choices for $\mathsf w$ once we have chosen $\mathsf z'$ and $j$) is $\binom{r+m-j-1}{m-1}$. (Indeed, this is the same as the number of multisets of cardinality $m-1$ with elements taken from the set $\{a_j,\ldots, a_r\}$.) On the other hand, if $a_j=1$ (equivalently, $j=1$), then the number of nondecreasing words over $\{a_1,\ldots,a_r\}$ that start with $a_j$ and do not end in $1$ is $\binom{r+m-j-1}{m-1}-1=\binom{r+m-2}{m-1}-1$ (we must exclude the word $11\cdots 1$). 

It follows from the previous paragraph that a generating tree describing the class $\bigcup_{n\geq 1}\mathscr Y_n(m)$ is \[\text{Axiom: }(2)\qquad\text{Rule: }(r)\leadsto(2)^{\binom{r+m-2}{m-1}-1}(3)^{\binom{r+m-3}{m-1}}(4)^{\binom{r+m-4}{m-1}}\cdots(r+1)^{\binom{m-1}{m-1}}\text{ for all }r\geq 2.\] This is exactly the same generating tree that we found above for the class $\bigcup_{n\geq 1}\mathscr U_n(m)$. Therefore, \[|\mathscr M_{n+1}(m)|=|\mathscr Y_n(m)|=|\mathscr U_n(m)|=|\!\Tam_n'(m)|.\] The desired result is now a consequence of \eqref{EqMeet3}. 
\end{proof}

By specializing Theorems~\ref{ThmMeet3} and \ref{ThmMeet4} to the Tamari lattices $\Tam_n$, we obtain the following corollary, which is also a consequence of Proposition~\ref{PropMeet1}. 
 
\begin{corollary}
Suppose $n\geq 2$. We have \[\max_{\mu\in\Tam_n}\left|O_{\Pop_{\Tam_n}}(\mu)\right|=n.\] The number of Dyck paths $\mu\in\Tam_n$ such that $\left|O_{\Pop_{\Tam_n}}(\mu)\right|=n$ is the Catalan number $C_{n-2}$. 
\end{corollary}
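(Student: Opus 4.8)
The plan is to obtain the corollary as the $m=1$ specialization of Theorems~\ref{ThmMeet3} and~\ref{ThmMeet4}. The key observation is that the classical Tamari lattice $\Tam_n$ is precisely the $m$-Tamari lattice $\Tam_n(1)$, since it is the $\nu$-Tamari lattice associated to the lattice path $\nu=(\text{NE})^n=(\text{NE}^1)^n$. Once this identification is made, both assertions follow by direct substitution, so there is essentially no obstacle beyond routine bookkeeping.

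First I would invoke Theorem~\ref{ThmMeet3} with $m=1$. That theorem asserts that $\max_{\mu\in\Tam_n(m)}\left|O_{\Pop_{\Tam_n(m)}}(\mu)\right|=m+n-1$, and setting $m=1$ gives $m+n-1=n$. This is exactly the first claim of the corollary. Next I would appeal to Theorem~\ref{ThmMeet4}, which counts the lattice paths attaining the maximal forward-orbit size by the formula $|\mathscr M_n(m)|=\frac{1}{n-1}\binom{(m+1)(n-2)+m-1}{n-2}$. Substituting $m=1$ yields $(m+1)(n-2)+m-1=2(n-2)$, so the count becomes $\frac{1}{n-1}\binom{2(n-2)}{n-2}$. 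Recognizing that $\frac{1}{(n-2)+1}\binom{2(n-2)}{n-2}=C_{n-2}$ is the Catalan number then completes the enumeration.

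The only points requiring any care are the identification $\Tam_n=\Tam_n(1)$ and the elementary binomial simplification, neither of which is genuinely difficult. As a sanity check, I would note that the same conclusion can be reached independently through Proposition~\ref{PropMeet1}: under the sublattice isomorphism $\Av_n(312)\cong\Tam_n$, that proposition already establishes both the bound $n$ on forward-orbit sizes and the Catalan count $C_{n-2}$ for the elements attaining it (there, the paths of maximal orbit size correspond to the $312$-avoiding permutations ending in the suffix $n1$). This agreement provides a consistency check on the two routes and confirms that no additional argument is needed.
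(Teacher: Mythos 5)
Your proposal is correct and matches the paper's proof exactly: the corollary is obtained by specializing Theorems~\ref{ThmMeet3} and~\ref{ThmMeet4} to $m=1$ via the identification $\Tam_n=\Tam_n(1)$, and the paper likewise remarks that it also follows from Proposition~\ref{PropMeet1}, just as in your consistency check.
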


\subsection{$1$-$\Pop$-Sortable Lattice Paths}\label{Subsec1Sortable}

We now turn back to $\nu$-Tamari lattices for general $\nu$. Suppose $\nu$ starts at $(0,0)$ and ends at $(\ell-n,n)$. Note that there are unique nonnegative integers $\gamma_0,\ldots,\gamma_n$ such that $\nu=\text{E}^{\gamma_0}\text{NE}^{\gamma_1}\cdots\text{NE}^{\gamma_n}$. Recall that a lattice path $\mu\in\Tam(\nu)$ is called \emph{$1$-$\Pop$-sortable} if $\Pop(\mu)=\nu$.  

\begin{theorem}\label{ThmMeet5}
Let $\nu=\E^{\gamma_0}\!\N\!\E^{\gamma_1}\cdots\N\!\E^{\gamma_n}$ be a lattice path, where $\gamma_0,\ldots,\gamma_n$ are nonnegative integers. The number of $1$-$\Pop$-sortable lattice paths in $\Tam(\nu)$ is $2^{|\mathscr A(\nu)|}$, where $\mathscr A(\nu)$ is the set of indices $k\in\{0,\ldots,n-1\}$ such that $\gamma_k\geq 1$. 
\end{theorem}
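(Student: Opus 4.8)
The plan is to work entirely with $\nu$-bracket vectors and to read off $\Pop$ from Proposition~\ref{LemMeet2}. Since $\nu$ is the minimum of $\Tam(\nu)$ and ${\bf b}(\nu)=(b_0(\nu),\dots,b_\ell(\nu))$ records the heights along $\nu$, a path $\mu$ is $1$-$\Pop$-sortable exactly when ${\bf b}(\Pop(\mu))={\bf b}(\nu)$, i.e.\ $\eta_i(\mu)=b_i(\nu)$ for every $i$. Writing $\nu=\E^{\gamma_0}\N\E^{\gamma_1}\cdots\N\E^{\gamma_n}$, the fixed positions are $f_k=\gamma_0+\cdots+\gamma_k+k$, so the block $\{f_{k-1}+1,\dots,f_k\}$ on which $b_j(\nu)=k$ has exactly $\gamma_k$ interior positions together with the fixed position $f_k$. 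I would first record that $\eta_{f_k}(\mu)=k$ holds automatically, so all the content lies in the interior positions.

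First I would pin down the shape of a sortable vector block by block. Recall that within a block the entries of ${\bf b}(\mu)$ are weakly decreasing with $b_{f_k}(\mu)=k$. Fix an interior index $j$ with $f_{k-1}<j<f_k$. If $b_j(\mu)=b_{j+1}(\mu)$, then $j\notin\Delta(\mu)$ by Lemma~\ref{LemMeet3}, so $\eta_j(\mu)=b_j(\mu)$ and sortability forces $b_j(\mu)=k$; if instead $b_j(\mu)>b_{j+1}(\mu)$, then $j\in\Delta(\mu)$ and $\eta_j(\mu)\ge b_{j+1}(\mu)$ (the inequality used in Corollary~\ref{CorMeet1}), so $\eta_j(\mu)=k$ forces $b_{j+1}(\mu)=k$. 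In either case, whenever an interior entry exceeds $k$ the next entry equals $k$; combined with weak monotonicity this shows that at most one interior entry of each block can exceed $k$, and it must be the first one. Hence a sortable ${\bf b}(\mu)$ has the form $V_0\,0^{\gamma_0}\,V_1\,1^{\gamma_1}\cdots V_n\,n^{\gamma_n}$ (where $k^{\gamma_k}$ denotes $\gamma_k$ consecutive entries equal to $k$), with $V_k:=b_{f_{k-1}+1}(\mu)\ge k$, with $V_k=k$ forced whenever $\gamma_k=0$ and $V_n=n$ forced for the top block.

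Next I would determine which tuples $(V_k)$ actually occur. Call a block \emph{flat} if $V_k=k$ and \emph{raised} if $V_k>k$; raising requires $\gamma_k\ge1$ and $k\le n-1$, i.e.\ $k\in\mathscr A(\nu)$. For a raised block the condition $\eta_{f_{k-1}+1}(\mu)=k$ is genuinely nonlocal: by the definition of $\eta$ it says that no $v$ with $k<v\le V_k-1$ satisfies $b_i(\mu)\le v$ for all $f_{k-1}+1<i\le f_v$, which (the non-leading entries of the intervening blocks being already $\le v$) amounts to $\max(V_{k+1},\dots,V_v)>v$ for every such $v$. Together with condition~\eqref{Item3} of Definition~\ref{DefMeet2}, which forces $V_{k+1},\dots,V_{V_k}\le V_k$, a downward induction inside each maximal run of raised blocks shows that the raised blocks occur in intervals and that all $V_k$ in a maximal raised interval $[a,b]$ equal $b+1$. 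Conversely, starting from any subset $S\subseteq\mathscr A(\nu)$ and raising each maximal interval $[a,b]$ of $S$ to the common value $b+1$ produces a vector satisfying Definition~\ref{DefMeet2} and the coupling condition, hence a genuine $1$-$\Pop$-sortable path; I would verify conditions (I)--(III) and the $\eta$-condition directly on this explicit form.

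The upshot is a bijection between $1$-$\Pop$-sortable paths and subsets of $\mathscr A(\nu)$, recovered as $S=\{k:V_k>k\}$, which yields the count $2^{|\mathscr A(\nu)|}$. The main obstacle is the third step: since $\eta_i(\mu)=b_i(\nu)$ is nonlocal, the value to which a raised block is sent is governed by the blocks above it, and the crux is proving that this coupling collapses exactly to the ``raise a maximal interval to top-plus-one'' rule, so that the choices become independent binary flat/raised decisions, one for each east run counted by $\mathscr A(\nu)$. The boundary bookkeeping (the top block $k=n$, the blocks with $\gamma_k=0$, and the interaction of the raised values with condition~\eqref{Item3}) is routine once the interval description is in hand.
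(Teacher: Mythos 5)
Your proposal is correct and takes essentially the same route as the paper: both work with $\nu$-bracket vectors via Proposition~\ref{LemMeet2}, show that a $1$-$\Pop$-sortable path can differ from ${\bf b}(\nu)$ only at the block-leading positions $f_{k-1}+1$, and prove that the raised entries are forced to be constant on maximal intervals $[a,b]$ of raised blocks with common value $b+1$, yielding the bijection with subsets of $\mathscr A(\nu)$ and the count $2^{|\mathscr A(\nu)|}$. The paper packages this as an explicit pair of inverse maps $\alpha$ and $\beta$, with the forced-value claim proved by backward induction along each interval, whereas you run the induction in the other direction; the underlying computations (the $\eta$-condition at leading positions, condition~\eqref{Item3}, and Lemma~\ref{LemMeet3}/Corollary~\ref{CorMeet1}) are the same.
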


\begin{proof}
Note that $\ell=\gamma_0+\cdots+\gamma_n+n$. Let $f_0,\ldots,f_n$ be the fixed positions of $\nu$. Then $f_k=\gamma_0+\cdots+\gamma_k+k$ for $0\leq k\leq n$, and $f_{-1}=-1$ by convention. Suppose $\mu$ is $1$-$\Pop$-sortable, and let $\alpha(\mu)$ be the set of indices $k\in\{0,\ldots,n\}$ such that $b_{f_{k-1}+1}(\mu)>k$. Notice that if $\gamma_k=0$, then $k\not\in\alpha(\mu)$ because $b_{f_{k-1}+1}(\mu)=b_{f_k}(\mu)=k$. Also, $n\not\in\alpha(\mu)$ since $b_{f_{n-1}+1}(\mu)=n$. Thus, $\alpha$ is a map from the set $\Pop^{-1}(\nu)$ of $1$-$\Pop$-sortable elements of $\Tam(\nu)$ to the set $2^{\mathscr A(\nu)}$ of all subsets of $\mathscr A(\nu)$. We claim that this map is a bijection; to prove this, we will exhibit its inverse $\beta:2^{\mathscr A(\nu)}\to\Pop^{-1}(\nu)$. 

Let $Q\subseteq\mathscr A(\nu)$. There are unique integers $q_1<r_1<q_2<r_2<\cdots<q_t<r_t$ such that $Q=[q_1,r_1-1]\cup\cdots\cup[q_t,r_t-1]$, where we write $[a,b]$ for the set $\{a,a+1,\ldots,b\}$. For each $i\in[t]$ and each $k\in[q_i,r_i-1]$, let $c_{f_{k-1}+1}=r_i$. For each $j\in\{0,\ldots,\ell\}\setminus\{f_{k-1}+1:k\in Q\}$, let $c_j=b_j(\nu)$. (See Example~\ref{ExamMeet3}.) We readily check that $(c_0,\ldots,c_\ell)$ is a $\nu$-bracket vector. By Theorem~\ref{ThmBracket}, there exists a unique lattice path $\beta(Q)\in\Tam(\nu)$ such that ${\bf b}(\beta(Q))=(c_0,\ldots,c_\ell)$. It is immediate from Proposition~\ref{LemMeet2} that $\Pop(\beta(Q))=\nu$, so $\beta(Q)\in\Pop^{-1}(\nu)$. It is also a direct consequence of our construction that $\alpha(\beta(Q))=Q$. To complete the proof, we must show that $\beta(\alpha(\mu))=\mu$ for all $\mu\in\Pop^{-1}(\nu)$. 

Choose $\mu\in\Pop^{-1}(\nu)$. Write $\alpha(\mu)=[q_1,r_1-1]\cup\cdots\cup[q_t,r_t-1]$, where $q_1<r_1<q_2<r_2<\cdots<q_t<r_t$. Let $(c_0,\ldots,c_\ell)={\bf b}(\beta(\alpha(\mu)))$. Consider some $j\in\{0,\ldots,\ell\}$ that is not of the form $f_{k-1}+1$ for $k\in\alpha(\mu)$. If $j=f_{k-1}+1$ for some $k\not\in\alpha(\mu)$, then it follows from the definition of $\alpha$ that $b_j(\mu)=k=b_j(\nu)=c_j$. On the other hand, if $j\not\in\{f_{-1}+1,f_0+1,\ldots,f_{n-1}+1\}$, then we can use Corollary~\ref{CorMeet1} to see that $b_{j-1}(\Pop(\mu))\geq b_j(\mu)\geq b_j(\nu)$. In this case, we can use the assumption that $\mu\in\Pop^{-1}(\nu)$ to see that $b_{j-1}(\Pop(\mu))=b_{j-1}(\nu)=b_j(\nu)$, so $b_j(\mu)=b_j(\nu)=c_j$. Now choose $i\in[t]$. We have $c_{f_{k-1}+1}=r_i$ for all $k\in[q_i,r_i-1]$, so we want to prove that $b_{f_{k-1}+1}(\mu)=r_i$ for all $k\in[q_i,r_i-1]$. We proceed by backward induction on $k$. Suppose we have already proven that $b_{f_{\kappa-1}+1}(\mu)=r_i$ for all $\kappa\in[k+1,r_i-1]$. Let $a=b_{f_{k-1}+1}(\mu)$. Since $k\in\alpha(\mu)$, we must have $a>k$. If $k+1\leq a\leq r_i-1$, then our induction hypothesis guarantees that $b_{f_{a-1}+1}(\mu)=r_i>a$. However, this is impossible because condition \eqref{Item3} in Definition~\ref{DefMeet2} forces $b_j\leq a$ for all $f_{k-1}+2\leq j\leq f_a$. We deduce that $a\geq r_i$. 

Suppose $a>r_i$. It follows from the discussion above that $b_{f_{k-1}+2}(\mu)=k$. Because $k\in\alpha(\mu)$, we have $b_{f_{k-1}+1}(\mu)>k=b_{f_{k-1}+2}(\mu)$. Invoking Lemma~\ref{LemMeet3}, we find that $f_{k-1}+1\in\Delta(\mu)$. Recall that $\eta_{f_{k-1}+1}(\mu)$ is defined to be the largest element $h$ of $\{b_{f_{k-1}+1}(\nu),\ldots,$ $b_{f_{k-1}+1}(\mu)-1\}$ such that $b_j(\mu)\leq h$ for all $f_{k-1}+2\leq j\leq f_{h}$. It follows from the above discussion and our induction hypothesis that $b_j(\mu)\leq r_i$ for all $f_{k-1}+2\leq j\leq f_{r_i}$. Indeed, for each such $j$, we either have $b_j(\mu)=b_j(\nu)$ or $b_j(\mu)=r_i$. This shows that $\eta_{f_{k-1}+1}(\mu)\geq r_i$. It follows from Proposition~\ref{LemMeet2} that $b_{f_{k-1}+1}(\Pop(\mu))=\eta_{f_{k-1}+1}(\mu)\geq r_i>k=b_{f_{k-1}+1}(\nu)$, contradicting the assumption that $\Pop(\mu)=\nu$. From this contradiction, we deduce that $a=r_i$, as desired. This completes the induction and, therefore, the proof that $\beta(\alpha(\mu))=\mu$.  
\end{proof}

\begin{example}\label{ExamMeet3}
Let $\nu=\text{NE}^2\text{NENE}^2\text{NNENE}^3\text{NE}^2$. Then $\ell=18$, $n=7$, and the values of $\gamma_0,\ldots, \gamma_7$ are $0,2,1,2,0,1,3,2$. The fixed positions $f_0,\ldots,f_7$ are $0,3,5,8,9,11,15,18$. We have $\mathscr A(\nu)=\{1,2,3,5,6\}$. Suppose we choose the subset $Q=\{1,2,3,6\}$ of $\mathscr A(\nu)$. Then $Q=[1,4-1]\cup[6,7-1]$, so $q_1=1$, $r_1=4$, $q_2=6$, and $r_2=7$. Since $1\in[q_1,r_1-1]$, we define $c_{f_{1-1}+1}=c_1$ to be $r_1=4$. Similarly, we define $c_{f_{2-1}+1}=c_4=r_1=4$, $c_{f_{3-1}+1}=c_6=r_1=4$, and $c_{f_{6-1}+1}=c_{12}=r_2=7$. For $j\in\{0,\ldots,18\}\setminus\{1,4,6,12\}$, we put $c_j=b_j(\nu)$. The resulting vector $(c_0,\ldots,c_{18})$ is \[(0,4,1,1,4,2,4,3,3,4,5,5,7,6,6,6,7,7,7).\] Thus, $\beta(Q)=\text{NENNENE}^3\text{NENE}^2\text{NE}^3\in\Pop^{-1}(\nu)$.  
\end{example} 

\begin{corollary}
For $m,n\geq 1$, the number of $1$-$\Pop$-sortable $m$-ballot paths in $\Tam_n(m)$ is $2^{n-1}$. 
\end{corollary}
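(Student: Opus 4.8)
The plan is to obtain this directly from Theorem~\ref{ThmMeet5} by identifying the east-run lengths $\gamma_0,\ldots,\gamma_n$ of the defining path $\nu$. Recall from Section~\ref{SubsecLattice} that the $m$-Tamari lattice $\Tam_n(m)$ is precisely $\Tam(\nu)$ for $\nu=(\text{NE}^m)^n$, so the entire task reduces to counting the set $\mathscr A(\nu)$ for this specific choice of $\nu$.

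First I would rewrite $\nu=(\text{NE}^m)^n$ in the canonical form $\text{E}^{\gamma_0}\text{NE}^{\gamma_1}\cdots\text{NE}^{\gamma_n}$ demanded by Theorem~\ref{ThmMeet5}. Reading the word $\text{N}\text{E}^m\text{N}\text{E}^m\cdots\text{N}\text{E}^m$ from left to right, there is no east step preceding the first north step, and each of the $n$ north steps is followed by exactly $m$ east steps; hence $\gamma_0=0$ and $\gamma_1=\gamma_2=\cdots=\gamma_n=m$.

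Next I would compute $\mathscr A(\nu)$, which by definition is the set of indices $k\in\{0,\ldots,n-1\}$ with $\gamma_k\geq 1$. Since $\gamma_0=0$, the index $0$ is excluded, while for each $k$ with $1\leq k\leq n-1$ we have $\gamma_k=m\geq 1$, so $k\in\mathscr A(\nu)$. Therefore $\mathscr A(\nu)=\{1,\ldots,n-1\}$ and $|\mathscr A(\nu)|=n-1$. Applying Theorem~\ref{ThmMeet5} then yields that the number of $1$-$\Pop$-sortable $m$-ballot paths in $\Tam_n(m)$ equals $2^{|\mathscr A(\nu)|}=2^{n-1}$.

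I do not anticipate any genuine obstacle here, as this is a direct specialization rather than a new argument; the only point requiring a moment's care is the bookkeeping of the indices, namely verifying that the lone index $k=0$ (with $\gamma_0=0$) is the single excluded value among $\{0,\ldots,n-1\}$, which accounts for the drop from $n$ potential indices to the $n-1$ contributing to the exponent.
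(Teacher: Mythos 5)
Your proposal is correct and matches the paper's proof exactly: the paper also obtains the corollary by setting $\nu=(\text{NE}^m)^n$ in Theorem~\ref{ThmMeet5} and noting that $\mathscr A(\nu)=\{1,\ldots,n-1\}$, so that $2^{|\mathscr A(\nu)|}=2^{n-1}$. Your extra bookkeeping, checking $\gamma_0=0$ and $\gamma_1=\cdots=\gamma_n=m$, is just a spelled-out version of the same one-line specialization.
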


\begin{proof}
Set $\nu=(\text{NE}^m)^n$ in Theorem~\ref{ThmMeet5}, and note that $\mathscr A(\nu)=\{1,\ldots,n-1\}$. 
\end{proof}

\subsection{$2$-$\Pop$-Sortable Lattice Paths}\label{Subsec2Sortable}

In this subsection, we work with $\nu$-Tamari lattices for varying $\nu$. These various $\nu$ will have different lengths; therefore, the notation ${\bf b}(\mu)$ will be unambiguous because there will only be one $\nu$ under consideration such that $\mu\in\Tam(\nu)$. 

Let $\nu$ be a lattice path that starts at $(0,0)$, ends at $(\ell-n,n)$, and has fixed positions $f_0,\ldots,f_n$, where $n\geq 2$. Let $\nu^{\#}$ be the lattice path obtained from $\nu$ by deleting the first $f_0+1$ steps from $\nu$. In other words, if $\nu=\text{E}^{\gamma_0}\text{NE}^{\gamma_1}\cdots\text{NE}^{\gamma_n}$, then $\nu^{\#}=\text{E}^{\gamma_1}\text{NE}^{\gamma_2}\cdots\text{NE}^{\gamma_n}$. The $\nu^{\#}$-bracket vector ${\bf b}(\nu^{\#})$ is obtained by deleting the first $f_0+1$ entries of the $\nu$-bracket vector ${\bf b}(\nu)$ and decreasing each of the remaining entries by $1$. More generally, if $\vec{\mathsf b}$ is a $\nu$-bracket vector, then we obtain a $\nu^{\#}$-bracket vector $\vec{\mathsf b}^{\#}$ by deleting the first $f_0+1$ entries of $\vec{\mathsf b}$ and decreasing each of the remaining entries by $1$. For any $\mu\in\Tam(\nu)$, let $\mu^{\#}$ be the unique lattice path in $\Tam(\nu^{\#})$ such that ${\bf b}(\mu^\#)={\bf b}(\mu)^\#$. 

Proposition~\ref{LemMeet2} tells us that $b_i(\Pop(\mu))$ depends only on the entries $b_j(\mu)$ for $j\geq i$. It follows that if $\mu\in\Tam(\nu)$ is $t$-$\Pop$-sortable, then so is the lattice path $\mu^\#\in\Tam(\nu^\#)$. This yields a recursive method to generate and enumerate the $t$-$\Pop$-sortable elements of $\Tam(\nu)$. In this subsection, we detail this method for $t=2$. 

Suppose we are given a $2$-$\Pop$-sortable lattice path $\mu_0\in\Tam(\nu^\#)$. We want to describe all the ways to construct a $2$-$\Pop$-sortable lattice path $\mu\in\Tam(\nu)$ such that $\mu^\#=\mu_0$. Let $r_i=b_i(\mu_0)+1$ so that ${\bf b}(\mu)=(b_0(\mu),b_1(\mu),\ldots,b_{f_0}(\mu),r_0,r_1,\ldots,r_{\ell-f_0-1})$. Note that $1\leq r_i\leq n$ for all $0\leq i\leq \ell-f_0-1$. We need to describe the possible choices for $b_0(\mu),b_1(\mu),\ldots,b_{f_0}(\mu)$. We must put $b_{f_0}(\mu)=0$. This tells us that there is only one choice of $\mu$ if $f_0=0$. Thus, we may assume in what follows that $f_0\geq 1$. Corollary~\ref{CorMeet1} informs us that if $f_0\geq 2$, then $b_0(\Pop^2(\mu))\geq b_1(\Pop(\mu))\geq b_2(\mu)$. Since we are trying to construct $\mu$ so that $\Pop^2(\mu)=\nu$, we must have $b_0(\Pop^2(\mu))=b_0(\nu)=0$. Therefore, $b_j(\mu)=0$ for all $2\leq j\leq f_0$. This shows that we only need to describe the possible choices for $b_0(\mu)$ and $b_1(\mu)$. There are five cases to consider. 

\medskip 

\noindent {\bf Case 1:} Assume $f_0\geq 2$ and either $r_0=r_1<n$ or $r_0<r_1$. Observe that the inequality $r_0<r_1$ can only hold if $f_1=f_0+1$; in this case, $r_0=b_{f_1}(\mu)=1$. In either case, we have $b_{f_0+1}(\Pop(\mu))=r_0$, where we are using Corollary~\ref{CorMeet1} when $r_0=r_1<n$. Let $d=r_{f_{r_0}-f_0}=b_{f_{r_0}+1}(\mu)$. We claim that the possible choices for the pair $(b_0(\mu),b_1(\mu))$ are \[(0,0),\quad (r_0,0), \quad (d,0),\quad (r_0,r_0),\quad (d,r_0).\] Let us first check that each of these pairs is allowed. To do this, we need to verify that the resulting pair $(b_0(\Pop^2(\mu)),b_1(\Pop^2(\mu)))$ is $(0,0)$. Using Proposition~\ref{LemMeet2}, one can check that the pairs $(b_0(\Pop(\mu)),b_1(\Pop(\mu)))$ corresponding to the five pairs listed above are (in the corresponding order)
\[(0,0),\quad (0,0), \quad (r_0,0),\quad (r_0,0),\quad (r_0,0).\] Since $b_{f_0+1}(\Pop(\mu))=r_0$, we can use Proposition~\ref{LemMeet2} again to see that $(b_0(\Pop^2(\mu)),b_1(\Pop^2(\mu)))=(0,0)$ in each case. See Example~\ref{ExamMeet2}.  

We now want to show that these are the only possible choices. If $1\leq b_1(\mu)\leq r_0-1$, then $2\leq f_0+1\leq f_{b_1(\mu)}$ and $b_{f_0+1}(\mu)=r_0>b_1(\mu)$, contradicting condition \eqref{Item3} in Definition~\ref{DefMeet2}. If $r_0+1\leq b_1(\mu)\leq d-1$, then $2\leq f_{r_0}+1\leq f_{b_1(\mu)}$ and $b_{f_{r_0}+1}(\mu)=d>b_1(\mu)$, again contradicting Definition~\ref{DefMeet2}. This shows that $b_1(\mu)$ cannot belong to the set $\{1,\ldots,d-1\}\setminus\{r_0\}$. The exact same argument shows that $b_0(\mu)$ also cannot belong to $\{1,\ldots,d-1\}\setminus\{r_0\}$. Suppose $b_0(\mu)>d$ or $(b_0(\mu),b_1(\mu))=(d,d)$. If $b_0(\mu)>d$, then it is straightforward to check that $\eta_0(\mu)\geq d$, so $b_0(\Pop(\mu))\geq d$ by Proposition~\ref{LemMeet2}. In the second case, we have $b_0(\Pop(\mu))\geq d$ by Corollary~\ref{CorMeet1}. Since $b_{f_0+1}(\Pop(\mu))=r_0$, one can check that $\eta_0(\Pop(\mu))\geq r_0>0$, which is impossible by Proposition~\ref{LemMeet2} since we need to have $b_0(\Pop^2(\mu))=0$. This shows that $b_0(\mu)\leq d$ and $(b_0(\mu),b_1(\mu))\neq(d,d)$. Since $b_1(\mu)\leq b_0(\mu)\leq d$, we obtain the desired claim. 

\medskip 

\noindent {\bf Case 2:} Assume $f_0=1$ and either $r_0=r_1<n$ or $r_0<r_1$. This case is exactly the same as Case~1, except that we must have $b_1(\mu)=0$. Hence, the possible choices for the pair $(b_0(\mu),b_1(\mu))$ are \[(0,0),\quad (r_0,0), \quad (d,0),\] where $d=b_{f_{r_0}+1}(\mu)$. 

\medskip

\noindent {\bf Case 3:} Assume $f_0\geq 2$ and $r_0=r_1=n$. In this case, the possible choices for the pair $(b_0(\mu),b_1(\mu))$ are \[(0,0),\quad (n,0), \quad (n,n).\] The proof is exactly the same as in Case~1, except that we no longer have the entry $d$. 

\medskip 

\noindent {\bf Case 4:} Assume $f_0=1$ and $r_0=r_1=n$. In this case, the possible choices for $(b_0(\mu),b_1(\mu))$ are \[(0,0),\quad (n,0).\] Indeed, this case is exactly the same as Case~3, except that we must have $b_1(\mu)=0$. 

\medskip 

\noindent {\bf Case 5:} Assume $r_0>r_1$. This implies that $f_1\geq f_0+2$. By Lemma~\ref{LemMeet3}, $f_0+1\in\Delta(\mu)$, so Proposition~\ref{LemMeet2} tells us that 
\begin{equation}\label{EqMeet6}
b_{f_0+1}(\Pop(\mu))<b_{f_0+1}(\mu)=r_0.
\end{equation} We claim that the possible choices for $(b_0(\mu),b_1(\mu))$ are \[(0,0),\quad (r_0,0).\] Let us first check that each of these pairs is allowed. If $(b_0(\mu),b_1(\mu))=(0,0)$, then certainly $(b_0(\Pop^2(\mu)),b_1(\Pop^2(\mu)))=(0,0)$. Now suppose $(b_0(\mu),b_1(\mu))=(r_0,0)$. Then $0\in\Delta(\mu)$ by Lemma~\ref{LemMeet3}, so $\eta_0(\mu)<b_0(\mu)=r_0$. If $\eta_0(\mu)\geq 1$, then $1\leq f_0+1\leq f_{\eta_0(\mu)}$ and $b_{f_0+1}(\mu)=r_0>\eta_0(\mu)$, contradicting the definition of $\eta_0(\mu)$. Therefore, $\eta_0(\mu)=0$. Proposition~\ref{LemMeet2} tells us that $(b_0(\Pop(\mu)),b_1(\Pop(\mu)))=(0,0)$, so $(b_0(\Pop^2(\mu)),b_1(\Pop^2(\mu)))=(0,0)$. 

We want to show that these are the only possible choices. If $1\leq b_1(\mu)\leq r_0-1$, then $2\leq f_0+1\leq f_{b_1(\mu)}$ and $b_{f_0+1}(\mu)=r_0>b_1(\mu)$, contradicting condition \eqref{Item3} in Definition~\ref{DefMeet2}. The same argument shows that we cannot have $1\leq b_0(\mu)\leq r_0-1$. Now suppose, by way of contradiction, that $b_0(\Pop(\mu))\geq r_0$. Let $h=b_{f_0+1}(\Pop(\mu))$. Since $\mu$ is supposed to be $2$-$\Pop$-sortable, it follows from Corollary~\ref{CorMeet1} that $0=b_0(\Pop^2(\mu))\geq b_1(\Pop(\mu))$. Hence, $b_j(\Pop(\mu))=0\leq h$ for all $2\leq j\leq f_0$. Because ${\bf b}(\Pop(\mu))$ is a $\nu$-bracket vector, it follows from Definition~\ref{DefMeet2} that $b_j(\Pop(\mu))\leq h$ for all $f_0+1\leq j\leq f_h$. Hence, $b_j(\Pop(\mu))\leq h$ for all $2\leq j\leq f_h$. We noted in \eqref{EqMeet6} that $h<b_{f_0+1}(\mu)=r_0$. Therefore, it follows from the definition of $\eta_0(\Pop(\mu))$ that $\eta_0(\Pop(\mu))\geq h$. By Proposition~\ref{LemMeet2}, $0=b_0(\Pop^2(\mu))=\eta_0(\Pop(\mu))\geq h\geq 1$, which is impossible. From this contradiction, we deduce that $b_0(\Pop(\mu))<r_0$. It follows from Corollary~\ref{CorMeet1} that $b_1(\mu)<r_0$. We already showed that $b_1(\mu)$ cannot belong to the set $\{1,\ldots, r_0-1\}$, so $b_1(\mu)=0$. This implies that $b_j(\mu)=0<r_0$ for all $1\leq j\leq f_0$. We also have $b_j(\mu)\leq r_0$ for all $f_0+1\leq j\leq f_{r_0}$ since ${\bf b}(\mu)$ is a $\nu$-bracket vector and $b_{f_0+1}(\mu)=r_0$. If $b_0(\mu)>r_0$, then we can use Proposition~\ref{LemMeet2} to see that $b_0(\Pop(\mu))=\eta_0(\mu)\geq r_0$. This is a contradiction, so we deduce that $b_0(\mu)\leq r_0$. Hence, $(b_0(\mu),b_1(\mu))$ is either $(0,0)$ or $(r_0,0)$.

\begin{example}\label{ExamMeet2}
Suppose ${\bf b}(\nu)=(0,0,0,1,1,1,2,2,3,3,4,5)$. Then ${\bf b}(\nu^\#)=(0,0,0,1,1,2,2,3,4)$. Suppose $\mu^\#$ is chosen so that ${\bf b}(\mu^\#)=(1,1,0,1,1,3,2,3,4)$; one can check that $\mu^\#$ is indeed a $2$-$\Pop$-sortable element of $\Tam(\nu^\#)$. Then $(r_0,\ldots,r_8)=(2,2,1,2,2,4,3,4,5)$. The lattice path $\mu$ that we construct will be such that ${\bf b}(\mu)=(b_0(\mu),b_1(\mu),0,2,2,1,2,2,4,3,4,5)$. Since $r_0=r_1=2<5=n$, we are in Case 1. We have $d=b_{f_{r_0}+1}(\mu)=b_{f_2+1}(\mu)=b_8(\mu)=4$. The discussion above tells us that the possible choices for $(b_0(\mu),b_1(\mu))$ are $(0,0), (2,0), (4,0), (2,2), (4,2)$. Let us illustrate why $(2,2)$ and $(4,2)$ are allowable pairs. 

Suppose we choose $(b_0(\mu),b_1(\mu))=(2,2)$. Then ${\bf b}(\mu)=(2,2,0,2,2,1,2,2,4,3,4,5)$. We have ${\bf b}(\Pop(\mu))=(2,0,0,2,1,1,2,2,3,3,4,5)$ and ${\bf b}(\Pop^2(\mu))=(0,0,0,1,1,1,2,2,3,3,4,5)={\bf b}(\nu)$.

Suppose we choose $(b_0(\mu),b_1(\mu))=(4,2)$ instead. Then ${\bf b}(\mu)=(4,2,0,2,2,1,2,2,4,3,4,5)$, ${\bf b}(\Pop(\mu))=(2,0,0,2,1,1,2,2,3,3,4,5)$, and ${\bf b}(\Pop^2(\mu))=(0,0,0,1,1,1,2,2,3,3,4,5)={\bf b}(\nu)$.

Notice that we cannot choose $(b_0(\mu),b_1(\mu))=(3,2)$ since $(3,2,0,2,2,1,2,2,4,3,4,5)$ is not a $\nu$-bracket vector. Suppose we were to choose $(b_0(\mu),b_1(\mu))=(4,4)$. Then we would have ${\bf b}(\mu)=(4,4,0,2,2,1,2,2,4,3,4,5)$, ${\bf b}(\Pop(\mu))=(4,0,0,2,1,1,2,2,3,3,4,5)$ and ${\bf b}(\Pop^2(\mu))=(3,0,0,1,1,1,2,2,3,3,4,5)\neq{\bf b}(\nu)$, meaning $\mu$ would \emph{not} be $2$-$\Pop$-sortable. 
\end{example}

In the remainder of this section, we apply the previous discussion to enumerate $2$-$\Pop$-sortable lattice paths in $\nu$-Tamari lattices for some more specific (but still quite general) lattice paths $\nu$. 

Suppose $\nu=\text{E}^{\alpha_0}\text{N}^{\beta_0}\text{E}^{\alpha_1}\text{N}^{\beta_1}\cdots\text{E}^{\alpha_{q-1}}\text{N}^{\beta_{q-1}}\text{E}^{\alpha_q}$, where $\alpha_0,\ldots,\alpha_q,\beta_0,\ldots,\beta_{q-1}$ are nonnegative integers and $\alpha_1,\ldots,\alpha_{q-1},\beta_0,\ldots,\beta_{q-1}\geq 1$. Define $\theta(\nu)$ to be the number of indices $i\in\{0,\ldots,q-1\}$ such that $\alpha_i=1$. Let $\chi(\nu)$ be the number of indices $i\in\{0,\ldots,q-1\}$ such that $\alpha_i\geq 2$.

\begin{theorem}\label{ThmMeet6}
Let $\nu=\E^{\alpha_0}\!\N^{\beta_0}\!\E^{\alpha_1}\!\N^{\beta_1}\cdots\E^{\alpha_{q-1}}\!\N^{\beta_{q-1}}\!\E^{\alpha_{q}}$, where $\alpha_0,\ldots,\alpha_{q},\beta_0,\ldots,\beta_{q-1}$ are nonnegative integers such that $\alpha_1,\ldots,\alpha_{q-1}\geq 1$ and $\beta_0,\ldots,\beta_{q-1}\geq 2$. The number of $2$-$\Pop$-sortable lattice paths in $\Tam(\nu)$ is \[3^{\theta(\nu)}5^{\chi(\nu)}.\] 
\end{theorem}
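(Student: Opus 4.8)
The plan is to run the recursive stripping map $\mu\mapsto\mu^\#$ described above, showing that along each level the fiber sizes are \emph{constant}, so that the total count becomes a clean product that collapses to $3^{\theta(\nu)}5^{\chi(\nu)}$. First I would rewrite $\nu=\E^{\alpha_0}\N^{\beta_0}\cdots\E^{\alpha_q}$ in the form $\E^{\gamma_0}\N\E^{\gamma_1}\cdots\N\E^{\gamma_n}$ with $n=\beta_0+\cdots+\beta_{q-1}$. Setting $N_i=\beta_0+\cdots+\beta_{i-1}$ (so $N_0=0$ and $N_q=n$), one checks that $\gamma_{N_i}=\alpha_i$ for $0\le i\le q$ while $\gamma_j=0$ for every other index $j$. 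Iterating $\#$ yields paths $\nu^{(0)}=\nu,\nu^{(1)},\ldots,\nu^{(n)}=\E^{\alpha_q}$, where $\nu^{(k)}$ has first east-run length $\gamma_k$ and fixed position $f_0=\gamma_k$; the base path $\nu^{(n)}$ spans a singleton lattice and is trivially $2$-$\Pop$-sortable. Since $\mu\mapsto\mu^\#$ carries the $2$-$\Pop$-sortable paths of $\Tam(\nu^{(k)})$ onto those of $\Tam(\nu^{(k+1)})$, the desired count equals $\prod_{k=0}^{n-1}m_k$, where $m_k$ is the size of the fiber over a $2$-$\Pop$-sortable $\mu_0\in\Tam(\nu^{(k+1)})$ at the step $\nu^{(k+1)}\to\nu^{(k)}$, \emph{provided} $m_k$ does not depend on $\mu_0$. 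Establishing that independence is the heart of the argument.

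The key observation I would prove is that at every step the pair $(r_0,r_1)$ associated to $\mu_0\in\Tam(\nu^{(k+1)})$ satisfies $r_0=1$ and $r_1\ge 2$, so the construction is always in the ``$r_0<r_1$'' branch of Case~1 or Case~2, never in Cases~3, 4, or~5. Indeed, when $\gamma_k=\alpha_i\ge 1$ the hypothesis $\beta_i\ge 2$ gives $N_i+1<N_{i+1}$, so $\gamma_{k+1}=\gamma_{N_i+1}=0$; hence the first fixed position of $\nu^{(k+1)}$ is $0$, forcing $b_0(\mu_0)=0$ and $r_0=b_0(\mu_0)+1=1$. Moreover $\nu^{(k+1)}$ begins with a north step, so $b_1(\nu^{(k+1)})=1$ and therefore $b_1(\mu_0)\ge 1$, i.e. $r_1\ge 2$. (Here $d=r_1$, and $0,r_0=1,d\ge2$ are distinct, so the five listed pairs in Case~1 are genuinely distinct and the fiber has size exactly $5$.)

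It follows that $m_k$ depends only on $f_0=\gamma_k$: it equals $1$ when $\gamma_k=0$ (the ``$f_0=0$'' reduction), it equals $3$ when $\gamma_k=\alpha_i=1$ (Case~2), and it equals $5$ when $\gamma_k=\alpha_i\ge 2$ (Case~1). The nontrivial steps occur precisely at $k=N_0,\ldots,N_{q-1}$, and the step at $k=N_i$ contributes a factor $1$, $3$, or $5$ according to whether $\alpha_i=0$, $\alpha_i=1$, or $\alpha_i\ge 2$, while every remaining step contributes $1$. Multiplying, $\prod_{i=0}^{q-1}m_{N_i}=3^{\theta(\nu)}5^{\chi(\nu)}$, where the indices with $\alpha_i=0$ contribute the empty factor $3^0 5^0=1$ and are thus correctly absent from both $\theta(\nu)$ and $\chi(\nu)$.

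The main obstacle is the uniformity claim of the second paragraph: one must verify that $\beta_i\ge 2$ genuinely guarantees $r_0=1<r_1$ for \emph{every} $2$-$\Pop$-sortable $\mu_0$, since this is exactly what collapses the five cases into the two that produce the clean factors $3$ and $5$. This is also precisely the point at which the hypothesis fails for $m$-Tamari lattices, where each $\beta_i$ equals $1$, explaining why those must be treated separately.
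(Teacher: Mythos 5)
Your proposal is correct and takes essentially the same route as the paper: both strip $\nu$ one $\E^{\gamma_k}\!\N$ prefix at a time via $\mu\mapsto\mu^\#$ and invoke the Case~1/Case~2 fiber analysis from Section~\ref{Subsec2Sortable}, with the hypothesis $\beta_i\geq 2$ forcing $r_0=1$ (so Cases~3--5 never arise) and yielding fibers of size $5$, $3$, or $1$ according to whether $\gamma_k\geq 2$, $\gamma_k=1$, or $\gamma_k=0$. The only differences are cosmetic: the paper organizes this as an induction on the length of $\nu$ and handles the $\alpha_0=0$ steps via the isomorphism $\Tam(\nu)\cong\Tam(\nu^\#)$, whereas you unroll the recursion into a product over all $n$ stripping steps and add the (true but not strictly needed) sharpenings that $r_1\geq 2$, hence $r_0<r_1$ always, and that the five pairs in Case~1 are pairwise distinct.
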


\begin{proof}
Suppose $\nu$ starts at $(0,0)$ and ends at $(\ell-n,n)$. If $\ell=0$ or $q=0$, then there is only $1$ element of $\Tam(\nu)$. In either of these cases, the desired result holds because $\theta(\nu)=\chi(\nu)=0$. We now assume $q\geq 1$ and $\ell\geq 1$ and proceed by induction on $\ell$. 

Preserving the notation from above, we have $\nu^\#=\text{N}^{\beta_0-1}\text{E}^{\alpha_1}\text{N}^{\beta_1}\cdots\text{E}^{\alpha_{q-1}}\text{N}^{\beta_{q-1}}\text{E}^{\alpha_q}$. If $\alpha_0=0$, then $\theta(\nu)=\theta(\nu^\#)$, $\chi(\nu)=\chi(\nu^\#)$, and the lattice $\Tam(\nu)$ is isomorphic to the lattice $\Tam(\nu^\#)$ (this is immediate from Pr\'eville-Ratelle and Viennot's original definition of these lattices that we presented earlier). In this case, \[|\!\Pop^{-2}(\nu)|=|\!\Pop^{-2}(\nu^\#)|=3^{\theta(\nu^\#)}5^{\chi(\nu^\#)}=3^{\theta(\nu)}5^{\chi(\nu)},\] where the identity $|\!\Pop^{-2}(\nu^\#)|=3^{\theta(\nu^\#)}5^{\chi(\nu^\#)}$ follows by induction on $\ell$. This proves the desired result when $\alpha_0=0$, so we may assume in what follows that $\alpha_0\geq 1$. 

Let $\mu_0$ be a $2$-$\Pop$-sortable element of $\Tam(\nu^\#)$. Let $f_0,\ldots,f_n$ be the fixed positions of $\nu$, and let $r_i=b_i(\mu_0)+1$. Since $\beta_0\geq 2$, the first step in $\nu^\#$ is a north step. This implies that $b_0(\mu_0)=0$, so $r_0=1$. The hypothesis that $\beta_0\geq 2$ also guarantees that $n\geq 2$. Therefore, we either have $r_0=r_1<n$ or $r_0<r_1$. We now consider two cases based on whether $\alpha_0=1$ or $\alpha_0\geq 2$. 

Suppose $\alpha_0=1$. Then $\theta(\nu)=\theta(\nu^\#)+1$ and $\chi(\nu)=\chi(\nu^\#)$. Furthermore, $f_0=1$, so we are in Case~2 from above. In this case, there are $3$ different $2$-$\Pop$-sortable lattice paths $\mu\in\Tam(\nu)$ such that $\mu^\#=\mu_0$. As this is true for all $2$-$\Pop$-sortable lattice paths $\mu_0\in\Tam(\nu^\#)$, we find that \[|\!\Pop^{-2}(\nu)|=3|\!\Pop^{-2}(\nu^\#)|=3^{\theta(\nu^\#)+1}5^{\chi(\nu^\#)}=3^{\theta(\nu)}5^{\chi(\nu)}.\] 

Now suppose $\alpha_0\geq 2$. Then $\theta(\nu)=\theta(\nu^\#)$ and $\chi(\nu)=\chi(\nu^\#)+1$. Furthermore, $f_0\geq 2$, so we are in Case~1 from above. In this case, there are $5$ different $2$-$\Pop$-sortable lattice paths $\mu\in\Tam(\nu)$ such that $\mu^\#=\mu_0$. As this is true for all $2$-$\Pop$-sortable lattice paths $\mu_0\in\Tam(\nu^\#)$, we find that \[|\!\Pop^{-2}(\nu)|=5|\!\Pop^{-2}(\nu^\#)|=3^{\theta(\nu^\#)}5^{\chi(\nu^\#)+1}=3^{\theta(\nu)}5^{\chi(\nu)}.\qedhere\] 
\end{proof}

Our final results enumerate $2$-$\Pop$-sortable elements of $m$-Tamari lattices, which are lattices to which Theorem~\ref{ThmMeet6} does not apply. There are two cases to consider: the Tamari lattices $\Tam_n$ and the $m$-Tamari lattices $\Tam_n(m)$ for $m\geq 2$. Indeed, it is not difficult to check that for $m\geq 2$, the number of $2$-$\Pop$-sortable $m$-ballot paths in $\Tam_n(m)$ is independent of $m$ (we will see this in the proof of Theorem~\ref{ThmMeet7}). In fact, one can show that for $m\geq t$, the number of $t$-$\Pop$-sortable $m$-ballot paths in $\Tam_n(m)$ is independent of $m$. 

We first handle the Tamari lattices. Let $P_n$ denote the $n^\text{th}$ Pell number. These numbers, which form the sequence A000129 in \cite{OEIS}, satisfy $P_1=1$, $P_2=2$, and $P_{n+1}=2P_n+P_{n-1}$ for $n\geq 2$. 

\begin{theorem}
For each $n\geq 1$, the number of $2$-$\Pop$-sortable Dyck paths in the Tamari lattice $\Tam_n$ is the Pell number $P_n$. 
\end{theorem}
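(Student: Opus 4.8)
The plan is to run the recursive generation procedure of this subsection on a one-parameter auxiliary family and extract a Pell recurrence. Write $\nu_n=(\text{NE})^n$, so $\Tam_n=\Tam(\nu_n)$, and for $k\ge 0$ set $\lambda_k=\text{E}(\text{NE})^k$. Computing fixed positions gives $f_0(\nu_n)=0$ and $f_0(\lambda_k)=1$, and one checks directly that $\nu_n^{\#}=\lambda_{n-1}$ and $\lambda_k^{\#}=\lambda_{k-1}$. Let $a_n=|\Pop^{-2}(\nu_n)|$ and $c_k=|\Pop^{-2}(\lambda_k)|$. Since $f_0(\nu_n)=0$, the recursive method produces exactly one $2$-$\Pop$-sortable preimage under $\#$ for each $2$-$\Pop$-sortable path, so $\mu\mapsto\mu^{\#}$ is a bijection from $\Pop^{-2}(\nu_n)$ to $\Pop^{-2}(\lambda_{n-1})$; hence $a_n=c_{n-1}$, and it suffices to prove $c_k=P_{k+1}$.

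Next I would set up a two-type refinement of $c_k$. Because $\mu\mapsto\mu^{\#}$ maps $\Pop^{-2}(\lambda_k)$ into $\Pop^{-2}(\lambda_{k-1})$ and the fibers of this map are exactly the extensions described in the five cases above, $\Pop^{-2}(\lambda_k)$ is the disjoint union, over all $2$-$\Pop$-sortable $\mu_0\in\Tam(\lambda_{k-1})$, of the $2$-$\Pop$-sortable extensions of $\mu_0$. For $\mu\in\Tam(\lambda_k)$, call $\mu$ of \emph{type $Z$} if $b_0(\mu)=0$ and of \emph{type $P$} if $b_0(\mu)\ge 1$, and let $z_k,p_k$ be the corresponding counts, so $c_k=z_k+p_k$. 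Since $f_0(\lambda_k)=1$, every extension falls under Case~2, Case~4, or Case~5. The decisive point is that $r_1=b_1(\mu_0)+1=1$ for every $\mu_0$, because position $1$ is the fixed position $f_0$ of $\lambda_{k-1}$; thus for $k\ge 2$ Case~4 is impossible, and the applicable case is governed entirely by $r_0=b_0(\mu_0)+1$.

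Reading the explicit lists of admissible pairs $(b_0(\mu),b_1(\mu))$ then gives the transition rules. A type-$Z$ parent has $r_0=r_1=1<k$, so Case~2 applies and yields the three pairs $(0,0),(1,0),(d,0)$; these are distinct because $d=b_2(\mu_0)+1\ge 2>1=r_0$, and their first coordinates show the children have types $Z,P,P$. A type-$P$ parent has $r_0>r_1=1$, so Case~5 applies and yields the two pairs $(0,0),(r_0,0)$, with children of types $Z,P$. Consequently, for $k\ge 2$, we get $z_k=z_{k-1}+p_{k-1}=c_{k-1}$ and $p_k=2z_{k-1}+p_{k-1}$. Substituting $p_k=c_k-c_{k-1}$ and $z_{k-1}=c_{k-2}$ collapses this system to $c_k=2c_{k-1}+c_{k-2}$, the Pell recurrence. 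With the base values $c_0=1=P_1$ and $c_1=2=P_2$ obtained by inspecting the tiny lattices $\Tam(\text{E})$ and $\Tam(\text{ENE})$, induction gives $c_k=P_{k+1}$ for all $k\ge 0$, whence $a_n=c_{n-1}=P_n$ for $n\ge 2$; the case $n=1$ is immediate.

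I expect the main obstacle to be justifying that $b_0(\mu)$ is exactly the right statistic: namely, that the number and types of the children of $\mu_0$ depend only on whether $b_0(\mu_0)=0$. This reduces to the two verifications above --- the identity $r_1=1$ (which simultaneously eliminates Cases~1, 3, and 4 via $f_0=1$ and $k\ge 2$ and pins down the case split) and the distinctness $d\ne r_0$ in Case~2 --- after which the remaining bookkeeping is routine.
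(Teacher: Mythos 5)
Your proposal is correct and follows essentially the same route as the paper's proof: the paper likewise works with the lattices $\Tam((\text{EN})^{k}\text{E})$ (your $\lambda_k$, invoking the isomorphism $\Tam_{k+1}\cong\Tam((\text{EN})^{k}\text{E})$ in place of your $f_0=0$ bijection), observes that $r_1=1$ forces Cases~2 and~5 with $3$ and $2$ extensions respectively, splits on whether $b_0(\mu_0)=0$, and obtains the same recurrence $a(n+1)=3a(n-1)+2\bigl(a(n)-a(n-1)\bigr)=2a(n)+a(n-1)$. The only cosmetic difference is that you derive the count of $b_0=0$ paths via the transfer identity $z_k=z_{k-1}+p_{k-1}=c_{k-1}$ (each parent having exactly one type-$Z$ child), whereas the paper identifies them directly as the paths of the form $\text{EN}\mu^*$ with $\mu^*$ $2$-$\Pop$-sortable in the next-smaller lattice.
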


\begin{proof}
Let $a(n)$ be the number of $2$-$\Pop$-sortable Dyck paths in $\Tam_n$. We trivially have $a(1)=1=P_1$ and $a(2)=2=P_2$. Now suppose $n\geq 2$. We find it convenient to work with $\Tam(\nu)$, where $\nu=(\text{EN})^{n}\text{E}$; note that this lattice is isomorphic to $\Tam_{n+1}$. We have $\nu^\#=(\text{EN})^{n-1}\text{E}$. Let $\mu_0$ be a $2$-$\Pop$-sortable lattice path in $\Tam(\nu^\#)$. We want to determine the number of lattice paths $\mu\in\Tam(\nu)$ such that $\mu^\#=\mu_0$. Let $f_0,\ldots,f_n$ be the fixed positions of $\nu$, and let $r_i=b_i(\mu_0)+1$. Observe that $f_0=1$ and $r_1=1$. 

If $r_0=r_1=1$, then we are in Case~2 from above, so there are $3$ different choices for $\mu$. If $r_0\neq 1$, then we are in Case~5 from above, so there are $2$ different choices for $\mu$. We have $r_0=r_1=1$ if and only if $\mu_0=\text{EN}\mu^*$ for some $2$-$\Pop$-sortable lattice path $\mu^*\in\Tam((\text{EN})^{n-2}\text{E})$; the number of such lattice paths is $a(n-1)$ because $\Tam((\text{EN})^{n-2}\text{E})$ is isomorphic to $\Tam_{n-1}$. Consequently, the number of choices for $\mu_0$ such that $r_0\neq 1$ is $a(n)-a(n-1)$. This shows that $a(n+1)=3a(n-1)+2(a(n)-a(n-1))=2a(n)+a(n-1)$. Hence, the sequence $a(1),a(2),\ldots$ satisfies the same initial conditions and recurrence relation as the sequence of Pell numbers.  
\end{proof}

The numbers $g(1),g(2),\ldots$ in the following theorem form sequence A006190 in \cite{OEIS}. 

\begin{theorem}\label{ThmMeet7}
Fix $m\geq 2$. For $n\geq 1$, let $g(n)$ be the number of $2$-$\Pop$-sortable $m$-ballot paths in $\Tam_n(m)$. Then \[\sum_{n\geq 1}g(n)z^n=\frac{z}{1-3z-z^2}.\]
\end{theorem}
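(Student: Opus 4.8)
The plan is to mimic the recursive strategy of Theorem~\ref{ThmMeet6} but to track additional data, since the north runs of $\nu=(\text{NE}^m)^n$ all have length $1$, so the five-case analysis cannot be applied in the crude form used there. First I would reduce to a more convenient family of base paths. Writing $\nu=(\text{NE}^m)^n$, we have $\alpha_0=0$ (the path begins with $\text{N}$), so $\nu^\#=\text{E}^m(\text{NE}^m)^{n-1}$ and, exactly as in the $\alpha_0=0$ step of the proof of Theorem~\ref{ThmMeet6}, the lattices $\Tam(\nu)$ and $\Tam(\nu^\#)$ are isomorphic; hence $g(n)$ equals the number $B_{n-1}$ of $2$-$\Pop$-sortable lattice paths in $\Tam(\text{E}^m(\text{NE}^m)^{n-1})$. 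Thus it suffices to compute $B_k$ for the base paths $\text{E}^m(\text{NE}^m)^{k}$.

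The key point is that for these base paths the first fixed position is $f_0=m\ge 2$, and the first block $b_0,\ldots,b_{m}$ of any bracket vector is weakly decreasing with $b_m=0$; in particular $b_0\ge b_1$. Consequently, when we lift a $2$-$\Pop$-sortable path $\mu_0\in\Tam(\text{E}^m(\text{NE}^m)^{k-1})$ to a path $\mu\in\Tam(\text{E}^m(\text{NE}^m)^{k})$, only Cases~1, 3, and 5 from the recursive procedure can occur (never the $f_0=1$ Cases~2 and 4, and never the subcase $r_0<r_1$). I would classify each $2$-$\Pop$-sortable path by the pair $(b_0,b_1)$ into three types: type $a$ ($b_0=b_1$ strictly below the maximal height), type $b$ ($b_0=b_1$ equal to the maximal height), and type $c$ ($b_0>b_1$). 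A path of type $a$, $b$, or $c$ is lifted through Case~1, 3, or 5, producing $5$, $3$, or $2$ children, respectively. Reading off the pairs $(b_0(\mu),b_1(\mu))$ listed in each case---and using that the entry $d$ in Case~1 satisfies $d\ge r_0+1$ by condition~(II) of Definition~\ref{DefMeet2}, so that all five Case~1 children are genuinely distinct---one determines the type of each child: Case~1 yields $2$ of type $a$ and $3$ of type $c$, Case~3 yields one of each type, and Case~5 yields one of type $a$ and one of type $c$. This gives the transition rules $a_k=2a_{k-1}+b_{k-1}+c_{k-1}$, $b_k=b_{k-1}$, and $c_k=3a_{k-1}+b_{k-1}+c_{k-1}$, with the base case $a_0=0$, $b_0=1$, $c_0=0$ coming from the single (type $b$) path in $\Tam(\text{E}^m)$.

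Finally I would extract the generating function. Since $b_k=b_{k-1}=1$ for all $k$, introducing $A(z)=\sum_k a_k z^k$ and $C(z)=\sum_k c_k z^k$ turns the recurrences into $(1-2z)A=zB+zC$ and $(1-z)C=3zA+zB$ with $B(z)=1/(1-z)$; solving this linear system gives $A+B+C=1/(1-3z-z^2)$, and multiplying by $z$ to account for $g(n)=B_{n-1}$ produces the claimed identity $\sum_{n\ge 1}g(n)z^n=z/(1-3z-z^2)$. I expect the main obstacle to be the bookkeeping in the middle step: correctly reading off, for each allowed child in Cases~1, 3, and 5, which of the three types it belongs to, together with the verification that the five Case~1 children are distinct and that no degenerate coincidences collapse the counts. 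Everything downstream is a routine generating-function computation. It is also worth noting that none of the transition counts depends on $m$ so long as $m\ge 2$ (the only role of $m$ is to force $f_0=m\ge 2$), which explains the remark that $g(n)$ is independent of $m$ in this range.
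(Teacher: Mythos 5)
Your proposal is correct and takes essentially the same route as the paper's proof: your types $a$, $b$, $c$ coincide exactly with the paper's sets $A_k$, $A_k'$, and the remaining $2$-$\Pop$-sortable paths in $\Tam(\nu_k)$, and your transfer rules $a_k=2a_{k-1}+b_{k-1}+c_{k-1}$, $b_k=b_{k-1}$, $c_k=3a_{k-1}+b_{k-1}+c_{k-1}$ are equivalent to the paper's recurrences $|A_n'|=|A_{n-1}'|=1$, $|A_n|=g(n)+|A_{n-1}|$, and $g(n+1)=2g(n)+3|A_{n-1}|+1$. The only difference is presentational---you run a full three-state transfer system (with the correct observation that $d\geq r_0+1$ makes the five Case~1 children distinct) where the paper tracks $g(n)$, $|A_{n-1}|$, $|A_{n-1}'|$ directly---and the generating-function finish is the same.
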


\begin{proof}
The lattice $\Tam_1(m)$ has only $1$ element, so $g(1)=1$. Following the discussion above, we find that an $m$-ballot path $\mu\in\Tam_2(m)$ is $2$-$\Pop$-sortable if and only if ${\bf b}(\mu)$ is of the form $(0,x,y,1,1,\ldots,1,2,2,\ldots,2)$, where $(x,y)$ is $(2,2)$, $(2,1)$, or $(1,1)$. Thus, $g(2)=3$. Now suppose $n\geq 2$. We find it convenient to work with the lattice paths $\nu_k=(\text{E}^m\text{N})^{k}\text{E}^m$; notice that the lattices $\Tam(\nu_k)$ and $\Tam_{k+1}(m)$ are isomorphic. Let $A_k$ be the set of $2$-$\Pop$-sortable lattice paths $\mu\in\Tam(\nu_k)$ such that either $b_0(\mu)+1=b_1(\mu)+1<k+1$ or $b_0(\mu)+1<b_1(\mu)+1$. Let $A_k'$ be the set of $2$-$\Pop$-sortable lattice paths $\mu\in\Tam(\nu_k)$ such that $b_0(\mu)+1=b_1(\mu)+1=k+1$. Consider $\nu=\nu_n$. We have $\nu_n^\#=(\text{E}^m\text{N})^{n-1}\text{E}^m=\nu_{n-1}$. 
For each $2$-$\Pop$-sortable lattice path $\mu_0\in\Tam(\nu_n^\#)$, we want to determine the number of lattice paths $\mu\in\Tam(\nu_n)$ such that $\mu^\#=\mu_0$. Let $f_0,\ldots,f_n$ be the fixed positions of $\nu_n$, and let $r_i=b_i(\mu_0)+1$. Observe that $f_0=m\geq 2$. This means we must be in Case~1, Case~3, or Case~5 from above. 

By inspecting the possibilities for the pair $(b_0(\mu),b_1(\mu))$ in each of Cases~1,~3, and~5, we find that the only way we can have $\mu\in A_n'$ is if $\mu_0\in A_{n-1}'$; furthermore, if $\mu_0\in A_{n-1}'$, then there is a unique $\mu\in A_n'$ such that $\mu^\#=\mu_0$. This shows that $|A_n'|=|A_{n-1}'|$. The only element of $A_1'$ is the lattice path $\text{E}^{m-2}\text{NE}^{m+2}$ with $\nu_1$-bracket vector $(1,1,0,0,\ldots,0,1,1,\ldots,1)$, so $|A_1'|=1$. By induction, $|A_k'|=1$ for all $k\geq 1$. 

A similar inspection shows that the only way we can have $\mu\in A_n$ is if either $\mu_0\in\Tam(\nu_n^\#)$ and $(b_0(\mu),b_1(\mu))=(0,0)$ or $\mu_0\in A_{n-1}$ and $(b_0(\mu),b_1(\mu))=(r_0,r_0)$. The number of $2$-$\Pop$-sortable elements of $\Tam(\nu_n^\#)$ is $g(n)$, so  
\begin{equation}\label{EqMeet4}
|A_n|=g(n)+|A_{n-1}|. 
\end{equation}
The only element of $A_1$ is $\nu_1$ itself, which has $\nu_1$-bracket vector $(0,0,\ldots,0,1,1,\ldots,1)$, so $|A_1|=1$. 

There are $5$ choices for $\mu$ when we are in Case~1, which occurs if and only if $\mu_0\in A_{n-1}$. There are $3$ choices for $\mu$ when we are in Case~3, which occurs if and only if $\mu_0\in A_{n-1}'$. Finally, there are $2$ choices for $\mu$ when we are in Case~5, which occurs if and only if $\mu_0$ is a $2$-$\Pop$-sortable element of $\Tam(\nu_n^\#)\setminus(A_{n-1}\cup A_{n-1}')$. Therefore, $g(n+1)=5|A_{n-1}|+3|A_{n-1}'|+2(g(n)-|A_{n-1}|-|A_{n-1}'|)=2g(n)+3|A_{n-1}|+|A_{n-1}'|$. Since $|A_{n-1}'|=1$, we have  
\begin{equation}\label{EqMeet5}
g(n+1)=2g(n)+3|A_{n-1}|+1.
\end{equation} 
Observe that the equations \eqref{EqMeet4} and \eqref{EqMeet5} still hold for $n=0$ and $n=1$ if we set $g(0)=1$ and $A_{-1}=A_0=\emptyset$. Put $G(z)=\sum_{n\geq 1}g(n)z^n$ and $R(z)=\sum_{n\geq 1}|A_{n-1}|z^n$. The equation \eqref{EqMeet4} translates into the generating function equation $R(z)=zG(z)+zR(z)$. The equation \eqref{EqMeet5} translates into $G(z)=2zG(z)+3zR(z)+\frac{z}{1-z}$. Solving this system of two equations yields $G(z)=\frac{z}{1-3z-z^2}$. 
\end{proof}

\section{Future Directions}\label{SecConclusion}

We have investigated the semilattice pop-stack-sorting operators $\Pop_M$, concentrating mostly on $\nu$-Tamari lattices. In the article \cite{DefantCoxeterPop}, the author explored these operators on weak orders of Coxeter groups. It would be nice to have other examples of meet-semilattices for which the semilattice pop-stack-sorting operators have especially structured yet nontrivial behavior. For example, it could be worth investigating these operators on the lattices $M_{\bf c}$ defined via chip-firing in the introduction.  

Recall that if $M$ is a meet-semilattice, then a map $f:M\to M$ is called \emph{compulsive} if $f(x)\leq\Pop_M(x)$ for all $x\in M$. It could be interesting to investigate which meet-semilattices $M$ have the property that $\sup\limits_{x\in M}\left|O_f(x)\right|\leq\sup\limits_{x\in M}\left|O_{\Pop_M}(x)\right|$ for every compulsive map $f:M\to M$. Indeed, the author proved in \cite{DefantCoxeterPop} that weak orders of irreducible Coxeter groups have this property, and he exhibited a $6$-element lattice that does not have this property. 

Theorem~\ref{ThmMeet4} states that the $m$-ballot paths in $\Tam_n(m)$ whose forward orbits under $\Pop_{\Tam_n(m)}$ are of maximum size are counted by the same numbers that count primitive $m$-ballot paths. It would be interesting to have a more direct explanation of this fact. 

In Sections~\ref{Subsec1Sortable} and~\ref{Subsec2Sortable}, we enumerated $1$-$\Pop$-sortable and $2$-$\Pop$-sortable $m$-ballot paths. Motivated by these results, we state the following conjecture. 

\begin{conjecture}\label{ConjMeet1}
Fix integers $m,t\geq 1$, and let $h_t(m,n)$ be the number of $t$-$\Pop$-sortable $m$-ballot paths in $\Tam_n(m)$. The generating function \[\sum_{n\geq 1}h_t(m,n)z^n\] is rational. 
\end{conjecture}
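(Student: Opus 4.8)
The plan is to prove rationality, for each fixed pair $(m,t)$, by exhibiting a \emph{finite-state} transfer system governing the left-to-right ``peeling'' recursion $\mu\mapsto\mu^\#$, thereby extending the generating-tree arguments behind Theorems~\ref{ThmMeet5} and~\ref{ThmMeet7} from $t\in\{1,2\}$ to arbitrary $t$ (and all $m\geq 1$). I would work with $\nu=(\E^m\N)^n\E^m$, so that $\Tam(\nu)\cong\Tam_{n+1}(m)$ and one peel removes exactly the first $m+1$ entries of the bracket vector, and build an $m$-ballot path one block at a time. To each $t$-$\Pop$-sortable path $\mu_0\in\Tam(\nu^\#)$ I would attach a \emph{profile}: a finite encoding of the left germs of $\Pop^0(\mu_0),\Pop^1(\mu_0),\ldots,\Pop^{t-1}(\mu_0)$. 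The two pieces of data I must read off from a profile are (i) the number of blocks $(b_0(\mu),\ldots,b_m(\mu))$ that may be prepended so that the resulting $\mu$ is again $t$-$\Pop$-sortable, and (ii) the profile of the prepended path $\mu$. Once these are $n$-independent functions of the profile and the profile set is finite, the profiles become the state set of a transfer matrix and rationality of $\sum_n h_t(m,n)z^n$ follows from the transfer-matrix theorem.

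The transition data is largely under control. By Proposition~\ref{LemMeet2}, each entry $b_i(\Pop(\mu))$ depends only on the entries $b_j(\mu)$ with $j\geq i$, so prepending a block leaves $\Pop^s(\mu)$ unchanged to the right of the new block, and the action of $\Pop$ on the new block is computed from the $\eta_i$'s by exactly the local rule used in the paper's ``Case~1--5'' analysis. Iterating Corollary~\ref{CorMeet1} forces the new block's entries beyond depth $t$ to equal their $\nu$-values, so only the top $\min\{t,m\}$ entries of each prepended block are genuinely free, and the admissible prependings are counted by a finite sum of binomial-type quantities read off from the profile, just as in Theorems~\ref{ThmMeet5} and~\ref{ThmMeet7}. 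What then remains for the transition is to check that, for each profile and each admissible choice of the boundedly many free entries, the germs of $\Pop^0(\mu),\ldots,\Pop^{t-1}(\mu)$ depend only on the old profile; this is a finite, if laborious, unwinding of Proposition~\ref{LemMeet2} of precisely the sort already performed for $t=2$.

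The main obstacle is proving that only finitely many profiles occur, uniformly in $n$, and the difficulty is genuine: already for $t=1$ a free entry in the first block may point to a target of height comparable to $n$ (the value $r_i$ in the proof of Theorem~\ref{ThmMeet5}), so a profile cannot record absolute entry values. The resolution I propose rests on a \emph{bounded-horizon} phenomenon for $\Pop$. Although a raised entry can point arbitrarily far, the number of distinct levels through which the leftmost entries descend before stabilizing is at most $t$, since each non-terminal descent consumes at least one application of $\Pop$ (Lemma~\ref{LemUnimpeded} and Corollary~\ref{CorMeet1} guarantee a strict drop at every such step). Thus the pointer structure of a $t$-$\Pop$-sortable path has nesting depth at most $t$, and a profile need only record this bounded-depth structure \emph{relatively}---which raised entry resolves at which of the next $O(t)$ relevant levels---together with the $O(t)$ normalized germ values; long but non-nested runs, like the long intervals of $\mathscr A(\nu)$ in Theorem~\ref{ThmMeet5}, collapse into a single recurring profile and produce the constant-ratio factors responsible for the poles of the generating function. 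I would make this precise by defining two left germs to be equivalent when they induce the same prepending count and the same next-germ equivalence class for every admissible free-entry choice, and then bounding the number of classes by the $O(t)$-depth, $O(t)$-value data above. Establishing this finiteness rigorously---in effect, recognizing $t$-$\Pop$-sortability as a property of bounded descriptive complexity read by a finite block-automaton---is the crux; with it in hand, the transfer-matrix theorem delivers the claimed rational generating function.
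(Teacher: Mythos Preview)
This statement is a \emph{conjecture} in the paper (Conjecture~\ref{ConjMeet1}), not a theorem; the paper offers no proof and explicitly leaves it open, remarking that even the Tamari case $m=1$ would be of interest. So there is no paper proof to compare your attempt against.

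Your proposal is a reasonable strategy but not a proof. You correctly isolate the crux: to run a transfer-matrix argument you need a finite, $n$-independent profile set, and you yourself flag that establishing this finiteness ``is the crux.'' But you do not establish it. The bounded-horizon heuristic you sketch---that a $t$-$\Pop$-sortable entry descends through at most $t$ levels before stabilizing---is correct as far as it goes, but it does not by itself bound the number of profiles. The difficulty is that the values a prepended entry may legally take (such as $r_0$ and $d$ in the paper's $t=2$ analysis) are not bounded; they are determined by a chain of pointers into $\mu_0$ that, while of nesting depth at most $t$, land at positions whose \emph{absolute} heights range over $\{1,\ldots,n\}$. Your idea of recording this data ``relatively'' is the right instinct, but you have not defined the equivalence relation precisely, shown that it has finitely many classes, or verified that prepending a block is well-defined on classes. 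None of these is a formality: for instance, showing that two germs with the same relative pointer structure yield the same multiset of next-profiles under every admissible prepending requires controlling how the $\eta_i$ computation of Proposition~\ref{LemMeet2} interacts with the relative encoding, and that interaction is precisely where the paper's $t=2$ case analysis becomes delicate. Until the finiteness of the profile set is actually proved, the transfer-matrix theorem does not apply and the conjecture remains open.
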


It would be interesting to have a proof of Conjecture~\ref{ConjMeet1} even just for Tamari lattices (i.e., the case when $m=1$). 

Finally, let us recall Problem~\ref{ProbMeet1}, which asks for a characterization of finite $\Pop$-trivial lattices.

\section{Acknowledgments}
The author was supported by a Fannie and John Hertz Foundation Fellowship and an NSF Graduate Research Fellowship (grant number DGE 1656466). The author thanks Letong Hong, Henri M\"uhle, Nathan Williams, and the anonymous referee for providing helpful feedback on this paper.


\begin{thebibliography}{99}

\bibitem{Albert}
M. Albert, M. Bouvel, and V. F\'eray, Two first-order logics of permutations.\ \emph{J. Combin. Theory Ser. A}, {\bf 171} (2020).

\bibitem{AlbertVatter}
M. Albert and V. Vatter, How many pop-stacks does it take to sort a permutation?\ \emph{Comput. J.}, (2021). 

\bibitem{Asinowski}
A. Asinowski, C. Banderier, and B. Hackl, Flip-sort and combinatorial aspects of
pop-stack sorting.\ \emph{Discrete Math. Theor. Comput. Sci.}, {\bf 22} (2021).

\bibitem{Asinowski2}
A. Asinowski, C. Banderier, S. Billey, B. Hackl, and S. Linusson, Pop-stack sorting and its image: permutations with overlapping runs.\ \emph{Acta. Math. Univ. Comenian.}, {\bf 88} (2019), 395--402.  

\bibitem{Avis}
D. Avis and M. Newborn. On pop-stacks in series.\ \emph{Util. Math.}, {\bf 19} (1981), 129--140.

\bibitem{Banderier}
C. Banderier, M. Bousquet-M\'elou, A. Denise, P. Flajolet, D. Gardy, and D. Gouyou-Beauchamps, Generating functions for generating trees.\ \emph{Discrete Math.}, {\bf 246} (2002), 29--55.

\bibitem{vonBell}
M. von Bell, R. S. Gonz\'alez D'Le\'on, F. A. Mayorga Cetina, and M. Yip, A unifying framework for the $\nu$-Tamari lattice and principal order ideals in Young's lattice. arXiv:2101.10425. 

\bibitem{vonBell2}
M. von Bell and M. Yip, Schr\"oder combinatorics and $\nu$-associahedra.\ \emph{European J.\ Combin.}, {\bf 98} (2021). 

\bibitem{Bergeron}
F. Bergeron and L.-F. Pr\'eville-Ratelle, Higher trivariate diagonal harmonics via generalized Tamari posets.\ \emph{J. Comb.}, {\bf 3} (2012), 317--341. 

\bibitem{Bentz}
H.-J. Bentz, Proof of the Bulgarian Solitaire conjectures.\ \emph{Ars Combin.}, {\bf 23} (1987), 151--170.

\bibitem{Bitar}
J. Bitar and E. Goles, Parallel chip firing games on graphs.\ \emph{Theoret. Comput. Sci.}, {\bf 92} (1992), 291--300. 

\bibitem{BjornerBrenti}
A. Bj\"orner and F. Brenti, Combinatorics of Coxeter groups, vol. 231 of Graduate Texts in Mathematics.\ Springer, 2005. 

\bibitem{BjornerWachsShellable}
A. Bj\"orner and M. Wachs, Shellable nonpure complexes and posets. II, \emph{Trans. Amer. Math.
Soc.} {\bf 349} (1997), 3945--3975. 

\bibitem{BonaSurvey}
M. B\'ona, A survey of stack-sorting disciplines.\ \emph{Electron. J. Combin.}, {\bf 9} (2003).

\bibitem{BousquetRep}
M. Bousquet-M\'elou, G. Chapuy, and L.-F. Pr\'eville-Ratelle, The representation of the symmetric group on $m$-Tamari intervals.\ \emph{Adv. Math.}, {\bf 247} (2013), 309--342. 

\bibitem{BousquetIntervals}
M. Bousquet-M\'elou, E. Fusy, and L.-F. Pr\'eville-Ratelle, The number of intervals in the $m$-Tamari lattices. \emph{Electron. J. Combin.}, {\bf 18} (2011). 

\bibitem{Ceballos}
C. Ceballos, A. Padrol, and C. Sarmiento, Geometry of $\nu$-Tamari lattices in types $A$ and $B$. \emph{Trans. Amer. Math. Soc.} {371} (2019), 2575--2622. 

\bibitem{Ceballos2}
C. Ceballos, A. Padrol, and C. Sarmiento, The $\nu$-Tamari lattice via $\nu$-trees, $\nu$-bracket vectors, and subword complexes. \emph{Electron. J. Combin.} {27} (2020). 

\bibitem{Chatel}
G. Ch\^atel and V. Pons, Counting smaller elements in the Tamari and $m$-Tamari lattices. \emph{J. Combin. Theory Ser. A}, {\bf 134} (2015), 58--97. 

\bibitem{Chung}
F. R. K. Chung, R. L. Graham, V. E. Hoggatt Jr., and M. Kleiman, The number of Baxter permutations.\ \emph{J. Combin. Theory Ser. A}, {\bf 24} (1978), 382--394. 

\bibitem{ClaessonPop}
A. Claesson and B. \'A. Gu{\dh}mundsson, Enumerating permutations sortable by $k$ passes through a pop-stack.\ \emph{Adv. Appl. Math.}, {\bf 108} (2019), 79--96. 

\bibitem{ClaessonPop2}
A. Claesson, B. \'A. Gu{\dh}mundsson, and J. Pantone, Counting pop-stacked permutations in polynomial time.\ arXiv:1908.08910.  

\bibitem{DefantCounting}
C. Defant, Counting $3$-stack-sortable permutations.\ \emph{J. Combin. Theory Ser. A}, {\bf 172} (2020). 

\bibitem{DefantCoxeterPop}
C. Defant, Pop-stack-sorting for Coxeter groups.\ arXiv:2104.02675. 

\bibitem{DefantCoxeterStack}
C. Defant, Stack-sorting for Coxeter groups.\ arXiv:2104.03215. 

\bibitem{DefantElvey}
C. Defant, A. Elvey Price, and A. J. Guttmann, Asymptotics of $3$-stack-sortable permutations.\ \emph{Electron. J. Combin.}, {\bf 28} (2021).

\bibitem{PromotionSorting}
C. Defant and N. Kravitz, Promotion sorting.\ arXiv:2005.07187. 

\bibitem{DefantZheng}
C. Defant and K. Zheng, Stack-sorting with consecutive-pattern-avoiding stacks.\ \emph{Adv. Appl. Math.}, {\bf 128} (2021).

\bibitem{Elder}
M. Elder and Y. K. Goh, $k$-pop stack sortable permutations and $2$-avoidance.\ \emph{Electron. J. Combin.}, {\bf 28} (2021). 

\bibitem{Etienne}
G. Etienne, Tableux de Young et Solitaire Bulgare.\ \emph{J. Combin. Theory Ser. A}, {\bf 58} (1991), 181--197.

\bibitem{FangPreville}
W. Fang and L.-F. Pr\'eville-Ratelle, The enumeration of generalized Tamari intervals. \emph{European J. Combin.}, {\bf 61} (2017), 69--84. 

\bibitem{Goulden}
I. Goulden and J. West, Raney paths and a combinatorial relationship between rooted nonseparable planar maps and two-stack-sortable permutations, \emph{J. Combin. Theory Ser. A}, {\bf 75} (1996), 220--242.

\bibitem{Gratzer}
G. Gr\"atzer, The congruences of a finite lattice, a ``proof-by-picture'' approach, Second Edition.\ Birkh\"auser, 2016. 

\bibitem{Griggs}
J. R. Griggs and C.-C. Ho, The cycling of partitions and composition under repeated shifts.\ \emph{Adv.
Appl. Math.}, {\bf 21} (1998), 205--227.

\bibitem{Hivert}
F. Hivert, J.-C. Novelli, and J.-Y. Thibon, The algebra of binary search trees.\ \emph{Theoret. Comput. Sci.}, {\bf 339} (2005), 129--165. 

\bibitem{Igusa}
K. Igusa, Solution of the Bulgarian solitaire conjecture.\ \emph{Math. Mag.}, {\bf 58} (1985), 259--271.

\bibitem{Klivans}
C. Klivans, The mathematics of chip-firing.\ Taylor and Francis Group, 2018. 

\bibitem{Latapy}
M. Latapy and H. D. Phan, The lattice structure of chip firing games.\ \emph{Phys. D}, {\bf 155} (2000), 69--82.

\bibitem{Muhle1}
H. M\"uhle, The core label order of a congruence-uniform lattice.\ \emph{Algebra Universalis}, {\bf 80} (2019). 

\bibitem{Muhle2}
H. M\"uhle, Hochschild lattices and shuffle lattices.\ arXiv:2008.13247. 

\bibitem{Muller}
F. M\"uller-Hoissen, J. M. Pallo, and J. Stasheff. Associahedra, Tamari lattices, and related structures, vol. 299 of Progress in Mathematics.\ Birkh\"auser, 2012.  

\bibitem{OEIS}
The On-Line Encyclopedia of Integer Sequences, published electronically at http://oeis.org, 2021.

\bibitem{PrevilleViennot}
L.-F. Pr\'eville-Ratelle and X. Viennot, An extension of Tamari lattices. \emph{Trans. Amer. Math. Soc.} {\bf 369} (2017), 5219--5239. (Assigned the incorrect title ``The enumeration of generalized
Tamari intervals'' by the journal.)

\bibitem{Pudwell}
L. Pudwell and R. Smith, Two-stack-sorting with pop stacks.\ \emph{Australas. J. Combin.}, {\bf 74} (2019), 179--195. 

\bibitem{ReadingCambrian}
N. Reading, Cambrian lattices.\ \emph{Adv. Math.}, {\bf 205} (2006), 313--353. 

\bibitem{ReadingBook}
N. Reading, Lattice theory of the poset of regions.\ In \emph{Lattice Theory: Special Topics and Applications}, G.\ Gr\"atzer and F.\ Wehrung (eds.).\ Birkh\"auser, Cham, 2016.\ 

\bibitem{ReadingShard}
N. Reading, Noncrossing partitions and the shard intersection order. \emph{J. Algebraic Combin.} {\bf 33} (2011), 483--530. 

\bibitem{ReadingSortable}
N. Reading, Sortable elements and Cambrian lattices.\ \emph{Algebra Universalis}, {\bf 56} (2007), 411--437.

\bibitem{Sapounakis}
A. Sapounakis, I. Tasoulas, and P. Tsikouras, On the dominance partial ordering on Dyck paths. \emph{J. Integer Seq.}, {\bf 9} (2006). 

\bibitem{Stanley}
R. P. Stanley, Enumerative combinatorics, vol. 1, Second Edition.\ Cambridge University Press, 2012. 

\bibitem{Stanley2}
R. P. Stanley, Enumerative combinatorics, vol. 2.\ Cambridge University Press, 1999. 

\bibitem{Tamari}
D. Tamari, The algebra of bracketings and their enumeration.\ \emph{Nieuw Archief voor Wiskunde}, {\bf 10} (1962), 131--146.

\bibitem{Ungar}
P. Ungar, $2N$ noncollinear points determine at least $2N$ directions.\ \emph{J. Combin. Theory Ser. A}, {\bf 33} (1982), 343--347. 

\bibitem{West3}
J. West, Generating trees and forbidden subsequences.\ \emph{Discrete Math.}, {\bf 157} (1996), 363--374. 

\bibitem{West2}
J. West, Generating trees and the Catalan and Schr\"oder numbers.\ \emph{Discrete Math.}, {\bf 146} (1995), 247--262. 

\bibitem{West}
J. West, Permutations with restricted subsequences and stack-sortable permutations, Ph.D. Thesis, MIT, 1990.

\bibitem{Zeilberger}
D. Zeilberger, A proof of Julian West's conjecture that the number of two-stack-sortable permutations of length
$n$ is $2(3n)!/((n + 1)!(2n + 1)!)$.\ \emph{Discrete Math.}, {\bf 102} (1992), 85--93. 
\end{thebibliography}
\end{document}